\newcommand{\ch}{\theta_0}
\newcommand{\cht}{\theta}
\newcommand{\pir}{\pi}
\newcommand{\Res}{\mathrm{Res}}
\newcommand{\Ind}{\mathrm{Ind}}
\newcommand{\dcrep}{{\mathsf{d}}}
\newcommand{\ecrep}{{\mathsf{e}}}
\newcommand{\Mu}{M}
\newcommand{\R}{\mathcal{R}}
\newcommand{\U}{\mathcal{U}}
\newcommand{\Z}{\mathbb{Z}}
\newcommand{\SL}{\mathrm{SL}}
\newcommand{\PGL}{\mathrm{PGL}}
\newcommand{\GL}{\mathrm{GL}}
\newcommand{\Gal}{\mathrm{Gal}}
\newcommand{\K}{\mathcal{K}}
\newcommand{\G}{\mathcal{G}}
\newcommand{\GG}{\mathbb{G}}
\newcommand{\lrc}[1]{\lceil #1 \rceil}
\newcommand{\g}{\mathfrak{g}}
\newcommand{\T}{\mathcal{T}}
\newcommand{\TT}{\mathbb{T}}
\newcommand{\Stor}{\mathbb{S}}
\newcommand{\ratk}{k}
\newcommand{\resk}{\kappa}
\newcommand{\extk}{k'}
\newcommand{\extresk}{\kappa'}
\newcommand{\LieT}{\mathfrak{t}}
\newcommand{\ep}{\varepsilon}
\newcommand{\PP}{\mathcal{P}}
\newcommand{\p}{\varpi}
\newcommand{\val}{\mathrm{val}}
\newcommand{\real}{\mathbb{R}}
\newcommand{\Rplus}{\widetilde{\mathbb{R}}}
\newcommand{\A}{\mathcal{A}}
\newcommand{\B}{\mathcal{B}}
\newcommand{\Aphi}{\Gamma}
\newcommand{\aphi}{\mathsf{a}}
\newcommand{\alp}{\alpha}
\newcommand{\I}{\mathcal{I}}
\newcommand{\cind}{\textrm{c-}\mathrm{Ind}}
\newcommand{\sig}{\sigma}
\newcommand{\om}{\omega}
\newcommand{\gam}{\gamma}
\newcommand{\Tr}{\mathrm{Tr}}
\newcommand{\Sh}{\mathcal{S}}
\newcommand{\ZZ}{Z}
\newcommand{\sgn}{\mathop{sgn}}
\newcommand{\w}{w}
\newcommand{\diag}{\textrm{diag}}
\newcommand{\Gmess}[1][d/2]{\G_{[0,\frac12],#1}}
\newcommand{\mat}[1]{\left[ \begin{matrix} #1 \end{matrix} \right]}
\newcommand{\smat}[1]{\left[ \begin{smallmatrix} #1 \end{smallmatrix} \right]}
\theoremstyle{plain}
\newtheorem{theorem}{Theorem}[section]
\newtheorem{lemma}[theorem]{Lemma}
\newtheorem{proposition}[theorem]{Proposition}
\newtheorem{corollary}[theorem]{Corollary}
\theoremstyle{definition}
\newtheorem{remark}[theorem]{Remark}
\theoremstyle{remark}
\journal{Algebra}
\begin{document}

\begin{frontmatter}
\title{Branching Rules for Supercuspidal Representations of $\SL_2(\ratk)$, for $\ratk$ a $p$-adic field}
\author{Monica Nevins\fnref{label2}}
\fntext[label2]{This research is supported by a Discovery Grant from NSERC Canada.}
\address{Department of Mathematics and Statistics, University of Ottawa, Ottawa, Canada K1N 6N5}
\ead{mnevins@uottawa.ca}
\begin{keyword}
\MSC 20G05
\end{keyword}

\date{\today}
\begin{abstract}
The restriction of a supercuspidal representation of $\SL_2(\ratk)$ to a maximal compact subgroup decomposes as a multiplicity-free direct sum of irreducible representations.  We explicitly describe this decomposition, and determine how the spectrum of this decomposition varies as a function of the parameters describing the supercuspidal representation.  
\end{abstract}
\end{frontmatter}

\section{Introduction}

The study of branching rules is that of considering the decomposition of an irreducible representation to an interesting subgroup, with the goal of revealing additional internal symmetries of the original representation, and exposing commonalities in families of representations.  In the case of Lie groups, the most interesting subgroup to consider is a maximal compact subgroup (which is unique up to conjugacy) and the resulting theory of minimal $K$-types \cite{Vogan1979} is a significant milestone in that representation theory.

When we turn to the case of a $p$-adic reductive algebraic group $\G$, there are several immediate differences.  Maximal compact subgroups are open, and not themselves $p$-adic algebraic groups; moreover, there exist in general several conjugacy classes of maximal compact subgroups.  Nevertheless, through the work of A.~Moy, G.~Prasad, C.~Bushnell, P.~Kutzko and others \cite{MoyPrasad1994,BushnellKutzko1993}, a kind of analogue of the theory of minimal $K$-types has emerged, now called the \emph{theory of types}.  The various compact subgroups arising in this theory are open but generally not maximal.  

It remains an open question to characterize the representations of a given maximal compact subgroup $\K$ which occur in the decomposition of an irreducible representation of $\G$.  Aspects of this question were considered for the group $\PGL_2(\ratk)$ by A.~Silberger \cite{Silberger1970, Silberger1977} and for $\GL_2(\ratk)$ by W.~Casselman \cite{Casselman1973} and K.~Hansen \cite{Hansen1987}.  Branching rules for the Weil representation of $\mathrm{Sp}_{2n}(\ratk)$ were considered for the restriction to one maximal compact subgroup by D.~Prasad in \cite{Prasad1998} and a more explicit decomposition, relative to all conjugacy classes of maximal compact subgroups, was given by K.~Maktouf and P.~Torasso in \cite{MaktoufTorasso2011}.  The author considered the case of principal series of $\SL_2(\ratk)$ in \cite{Nevins2005} and, together with P.~Campbell, addressed principal series of $\GL_3(\ratk)$ in \cite{CampbellNevins2009, CampbellNevins2010}.  Recently, U.~Onn and P.~Singla  completed the study of branching rules of unramified principal series of $\GL_3(\ratk)$ by giving an explicit description of the decomposition into irreducible representations of $\K$ \cite{OnnSingla2012}.
The work with $\GL_3(\ratk)$, along with related calculations by U.~Onn, A.~Prasad and L.~Vaserstein on sizes of double coset spaces in \cite{OnnPrasadVaserstein2006},  makes it clear that the question of decomposing principal series in the general case will be highly nontrivial.  

The work to date suggests that the presentation of a satisfactory answer is tied to the development of the representation theory of Lie groups over finite local rings.  This theory is of high current interest but as yet far from complete; as only one example note the work of A.-M.~Aubert, U.~Onn, A.~Prasad and A.~Stansinski \cite{AubertOnnPrasad2010}.

We would also like to signal the closely related work of J.~Lansky and A.~Raghuram \cite{LanskyRaghuram2007}, whose focus is the determination of newforms for $\SL_2(\ratk)$; this amounts to identifying and characterizing the irreducible component of least depth (and least degree) occuring in the restriction of a representation of $\SL_2(\ratk)$ to a maximal compact subgroup.

Constructions of supercuspidal representations, phrased in terms of compact open subgroups which arise from the Bruhat-Tits building of $\G$, offer the possibility of a geometric, or at least building-theoretic, approach to the description of the branching rules of the corresponding representations.  These include works by J.~Adler~\cite{Adler1998}, L.~Morris \cite{Morris1991,Morris1992} and more recently J.~K.~Yu \cite{Yu2001}, whose construction of tame supercuspidal representations was shown to be exhaustive by J.~Kim \cite{Kim2007}.

  The present work is the first step of a longer term project of describing branching rules for tame supercuspidal representations.  
In this paper, we consider the restriction of  supercuspidal representations of $\SL_2(\ratk)$, where $\ratk$ is a local nonarchimedean field of residual characteristic different from $2$, to the maximal compact subgroup $\K = \SL_2(\R)$, where $\R$ denotes the integer ring of $\ratk$.  The main results on branching rules are given in Theorems~\ref{T:depthzero} and \ref{T:positivedepth}.  An explicit description of all irreducible representations of $\K$ was given by J.~Shalika~\cite{Shalika1967}, so our answer is given as a sum of known representations.
 We then analyse the intertwining over $\K$ of different supercuspidal representations of $\G$, culminating in Corollary~\ref{C:intertwining} and Theorem~\ref{T:intertwining}.

This work, together with \cite{Nevins2005}, completes the branching rules for $\SL_2(\ratk)$.  Some consequences of these results, based on an early version of the present paper, are given in \cite{Nevins2011}.

The methods and approach used here are greatly inspired by those in \cite{Hansen1987}.  
For instance, the idea of using character calculations in Section~\ref{S:depthzero} to resolve the branching rules in the depth-zero case comes from the similar approach of K.~Hansen.  In fact, although our presentation makes no use of the corresponding results for $\GL_2(\ratk)$, the description of the decomposition could be accomplished via the following alternate route, which was used in the author's first draft.  Each tame supercuspidal representation of $\SL_2(\ratk)$ appears in the restriction to $\SL_2(\ratk)$ of some supercuspidal representation of $\GL_2(\ratk)$ \cite{MoySally1984}.  The decomposition of the restriction to $\GL_2(\R)$ of a supercuspidal representation of $\GL_2(\ratk)$ is given in \cite{Hansen1987}.  Consequently, it suffices to describe the restriction of each of these representations of $\GL_2(\R)$ to $\SL_2(\R)$ and identify how the various pieces are apportioned between the supercuspidal representations of $\SL_2(\ratk)$.  

Conversely, from the present work and the literature one may deduce that, given 
a supercuspidal representation of $\GL_2(\ratk)$  of depth $r$, 
all of the irreducible representations of $\GL_2(\R)$ of depth greater than $r$ which occur in its restriction decompose, upon further restriction, to a direct sum of two irreducible representations of $\SL_2(\R)$.  Furthermore, the methods of Section~\ref{S:intertwining} could easily be used to extend the work of K.~Hansen to determine the intertwining over $\GL_2(\R)$ of different supercuspidal representations.  Note that the $\GL_2$ analogue of Corollary~\ref{C:intertwining} was previously known \cite{Casselman1973} and is much simpler.

The advantage of the current approach to the branching rules for $\SL_2(\ratk)$ is two-fold.  For one, the present classification of supercuspidal representations of $\SL_2(\ratk)$ is tight: work by J.~Hakim and F.~Murnaghan \cite{HakimMurnaghan2008} has answered the question of equivalence of supercuspidal representations in J.~K.~Yu's construction as a function of the tamely ramified cuspidal $\G$-data of their construction; whereas this is less clear from the description of supercuspidal representations of $\GL_2(\ratk)$ used in \cite{Hansen1987}.  For another, this building-theoretic approach seems to be a most promising language for considering tame supercuspidal representations of general connected reductive $p$-adic groups.

In this paper we consider only one conjugacy class of maximal compact subgroup, namely $\K = \SL_2(\R)$.  There is another conjugacy class, represented by $\K^\eta$, where $\eta\in \GL_2(\ratk)$ is as in \eqref{E:eta}.  As was shown explicitly in \cite{Nevins2005} for the principal series of $\SL_2(\ratk)$, the conjugacy of these two groups under $\GL_2(\ratk)$ implies that their branching rules are completely analogous.

Generalizing the present work to higher rank cases poses significant challenges since, for example, in our arguments we do at several points use explicit descriptions of the anisotropic tori in $\G$.  Also, to parametrize certain double cosets, and in some calculations, we represent elements of the group in matrix form.  Some of these difficulties are surmountable, using for example the work of S.~Debacker~\cite{DeBacker2006} to describe the tori, and accepting a less strongly explicit description of the representations which arise.  That said, it is expected that any success towards the general case would advance the representation theory of the maximal compact subgroups, and so is intrinsically interesting.

The paper is organized as follows.  In Section~\ref{S:Notation}, we establish our notation.  We also describe some Moy-Prasad filtration subgroups and give a list of representatives of the anisotropic tori in $\SL_2(\ratk)$.  In Section~\ref{S:supercuspidals}, we give the known parametrization of all supercuspidal representations of $\SL_2(\ratk)$, which is naturally divided into two cases:  zero depth and positive depth.  We also present a subset of J.~Shalika's parametrization of representations of $\SL_2(\R)$.   We describe the necessary tools for restricting representations in Section~\ref{S:restriction}, including a statement of Mackey theory applicable to the present setting, and then we present a first decomposition of the supercuspidal representations upon restriction to $\SL_2(\R)$.  

Section~\ref{S:depthzero} is devoted to the depth-zero case.  We decompose into irreducible representations the components obtained through the Mackey decomposition, and identify their simple uniform description relative to Shalika's parametrization.  For the case of positive depth the corresponding results are proven in Section~\ref{S:posdepth}.  We conclude in Section~\ref{S:intertwining} with an analysis of the intertwining over $\SL_2(\R)$ of distinct supercuspidal representations of $\SL_2(\ratk)$.


\section{Notation and Background} \label{S:Notation}

\subsection{General notational conventions} \label{SS:general}
Let $\ratk$ be a local non\-arch\-i\-me\-de\-an field of residual characteristic $p\neq 2$.  Its characteristic may be $0$ or $p$.  The results of this paper are all valid in this general setting, but for the sake of brevity we will refer to our field as a $p$-adic field and our group as a $p$-adic group.

Denote the residue field of $\ratk$ by $\resk$, a finite field of order $q$.  Let the integer ring of $\ratk$ be $\R$ and its maximal ideal $\PP$.  Let $\p$ be a uniformizer, and normalize the valuation on $\ratk$ so that $\val(\p)=1$.  Then $\ratk^\times/(\ratk^\times)^2$ can be represented by $\{1, \ep, \p, \ep \p\}$ where $\ep$ is a fixed nonsquare in $\R^\times$ (which is chosen to be $-1$ when $-1 \notin (\ratk^\times)^2$).  We shall use $\extk = \ratk[\gamma]$ to denote a quadratic extension field of $\ratk$ by an element $\gamma$ such that $\gamma^2 \in \{\ep, \p, \ep \p\}$; these give all quadratic extensions of $\ratk$ (up to isomorphism).  The units of $\R$ admit a filtration by subgroups $\U_n$ where we set
$$
\U_n = \begin{cases} \R^\times & \textrm{if $n=0$,}\\ 1+\PP^n & \textrm{if $n>0$.}
\end{cases}
$$
We fix an additive quasi-character $\Psi$ of $\ratk$ which is trivial on $\PP$ but nontrivial on $\R$.

Given a subgroup $H$ of a group $G$ we write $H^g$ for the group $gHg^{-1}$.  If $\sigma$ is a representation of $H$ we write $\sigma^g$ for the corresponding representation of $H^g$ given on elements $h$  of $H^g$ by $\sigma^g(h) = \sigma(g^{-1}hg)$.  Thus unfortunately we have $\sigma^{gh} = (\sigma^h)^g$.

Given a closed subgroup $K$ of a connected reductive $p$-adic group $G$, and a representation $(\pi,V)$ of $K$, the compactly induced representation $\cind_K^G \pi$ is given by right action by $G$ on the space of functions
$$
\left\{f \colon G \to V \left| \textrm{\parbox{3.5in}{$\forall k\in K, g\in G, f(kg) = \pi(k)f(g)$, $f$ is smooth and is compactly supported mod $K$}}\right.\right\}.
$$
By Mautner's theorem, if $K$ is open and compact mod the center $Z$ of $G$, and if $\cind_K^G \pi$ is irreducible, then it is supercuspidal.  It is a lasting conjecture, proven now in many cases \cite{Yu2001,Kim2007,Stevens2008} that all supercuspidal representations of $G$ arise in this way, for some choice of $K$ and $\pi$.

Now let $\GG = \SL_2$ denote the algebraic group of $2\times 2$ matrices of determinant one and set $\G = \GG(\ratk)$.  Its center $Z$ is the two-element group $\{\pm I\}$.  Let $\g$ denote its Lie algebra and $\g^*$ its algebraic dual.  Throughout, we abuse notation by writing $\g=\g(\ratk)$ and $\g^* = \g^*(\ratk)$.  Given subsets $S_i$ of $\ratk$ or $\ratk^\times$, we define a corresponding set of matrices with the notation
$$
\left\{ \mat{S_1&S_2\\S_3&S_4} \right\} = \left\{ \left. \mat{a_1&a_2\\a_3&a_4}\right| a_i \in S_i\right\}.
$$
We also write $\diag(a,b)$ for a diagonal matrix with diagonal entries $a,b \in \ratk$, and will write $X(u,v)$ for an antidiagonal matrix starting in Section~\ref{SS:repR}.

For any $Y \in \g^*$, the map $X \mapsto \Psi(\langle Y,X\rangle)$ defines a smooth quasi-character of $\g$, and all smooth quasi-characters of $\g$  arise in this way.  In the presence of a nondegenerate bilinear form on $\g$ (taken without loss of generality to be the trace form $\Tr$ in this case) we can and do simply parametrize these quasi-characters by elements of $\g$.  Thus for $Y \in \g$ we have a quasi-character $X \mapsto \Psi(\Tr(YX))$.
While convenient for us, this shortcut is not used in \cite{Yu2001}, where the use of the dual Lie algebra throughout permits a uniform general construction.

\subsection{Filtration subgroups in $\SL_2(\ratk)$}

Let $\Stor$ denote a maximal torus of $\GG$, split over $\ratk$, with associated root system $\Phi$.  Let $\A = \A(\GG, \Stor, \ratk)$ denote the corresponding apartment (as in \cite[1.2]{Tits1979}).  We may think of $\A$ as the affine space under $X_\ast(\Stor) \otimes \mathbb{R}$, where $X_\ast(\Stor)$ is the group of $\ratk$-rational cocharacters of $\Stor$.  Let $\B = \B(\GG,\ratk)$ denote the reduced Bruhat-Tits building for $\GG$ over $\ratk$ and let $y\in \B$.   When, as here, $\G$ is semisimple, simply connected and split over $\ratk$, the stabilizer in $\G$ of $y$ is equal \cite[\S 3.1]{Tits1979} to $\G_{y,0}$, the connected parahoric subgroup associated with $y$, as described by A.~Moy and G.~Prasad in \cite{MoyPrasad1994}.  It is thus unambiguous to write $\G_y$ for $\G_{y,0}$.

Define $\Rplus = \real \cup (\real+) \cup \{\infty\} $ as in \cite[6.4.1]{BruhatTits1972} and associate to each $y \in \A, a\in \Phi$ and $r \in \Rplus$ a subgroup $\GG_a(\ratk)_{y,r}$ of the corresponding root subgroup as well as the  associated $\R$-submodule $\g_{a,y,r} \subset \g$.  For $r \geq 0$, one can similarly define filtration subgroups on the torus, $\Stor(\ratk)_{y,r}$.  Then the Moy-Prasad filtration subgroup $\G_{y,r}$ is the group generated by $\{\Stor(\ratk)_{y,r}, \GG_a(\ratk)_{y,r} \mid a\in \Phi\}$.  Given a subset $I\subseteq \A$, one sets $\G_{I,r} = \cap_{y \in I}\G_{y,r}$.  The lattices $\g_{y,r}$ in $\g$, for $y\in \A$ and $r\in \Rplus$, are defined similarly.

More concretely, for the case $\G=\SL_2(\ratk)$ we choose $\Stor$ to be the diagonal torus, with $\Phi = \{\pm \alpha\}$.  Then the root subgroup $\GG_{\alpha}$ (respectively $\GG_{-\alpha}$) is the group of unit upper triangular (respectively lower triangular) matrices in $\GG$.  By identifying an origin $y=0$ in $\A$, and choosing coordinates so that $\alpha^\vee = 2$, the vertices of the simplicial complex representing the action of the affine Weyl group on $\A$ occur for integer coordinates.  We have
$\G_{0} = \SL_2(\R)$; this is a maximal compact subgroup of $\G$ which we denote by $\K$ throughout.  On the other hand, at $y=1$ we obtain  
\begin{equation} \label{E:Ktilde}
\G_{1} = \left\{ \mat{\R & \PP^{-1} \\ \PP & \R}\right\}  \cap \G = \K^\eta
\end{equation}
where $\eta \in \GL_2(\ratk)$ is given by
\begin{equation} \label{E:eta}
\eta = \mat{1 & 0\\0 & \p}.
\end{equation}
Then $\K$ and $\K^\eta$ are representatives of the two conjugacy classes of maximal compact subgroups of $\G$.  If we define, for finite $r \in \Rplus$, 
$$
\lceil r \rceil = \begin{cases} 
\min\{n \in \Z \mid n \geq r\} & \textrm{if $r\in \real$, and}\\
\min\{n \in \Z \mid n > r\} & \textrm{if $r\in \real+$,}
\end{cases}
$$
then for any $y\in \A$ and $r\geq 0$ we have
$$
\GG_{\alpha}(\ratk)_{y,r} = \left\{ \mat{1& \PP^{\lrc{r-y}} \\ 0 & 1} \right\}, \quad \GG_{-\alpha}(\ratk)_{y,r}  = \left\{ \mat{1& 0 \\ \PP^{\lrc{r+y}} & 1} \right\}
$$
and $\Stor(\ratk)_{y,r}$ consists of those diagonal matrices in $\G$ with entries in $\U_{\lrc{r}}$.
Thus for finite positive $r\in \Rplus$ we have
\begin{equation} \label{E:Gyr}
\G_{y,r} = \left\{ \mat{\U_{\lceil r \rceil} & \PP^{\lceil r-y \rceil} \\ \PP^{\lceil r+y \rceil} & \U_{\lceil r \rceil}} \right\} \cap \G.
\end{equation}
Note that the Moy-Prasad filtration subgroups $\G_{0,r}$ of $\K$ and $\G_{1,r}$ of $\K^\eta$, for $r \geq 0$, are just the standard congruence subgroups $\K_r$ and $\K^\eta_r$, with distinct ones indexed by $r\in \mathbb{N}$.

Given an irreducible smooth representation $(\pi,V)$ of $\G$, the \emph{depth} of $\pi$ is defined as the least $r\in \real_{ \geq 0}$ such that there exists $x\in \B(\GG,\ratk)$ for which $V$ contains vectors invariant under $\G_{x,r+}$.  We also refer to the depth of a representation of $\G_x$, for fixed $x$.

\subsection{Anisotropic tori in $\G$}

The construction of supercuspidal representations of $\G$ proceeds from its non-split tori.  In $\SL_2(\ratk)$ all non-split tori are totally anisotropic and split over a quadratic extension $\extk$ of $\ratk$. 
A torus is called \emph{unramified} if $\extk$ is unramified, and \emph{ramified} otherwise.  We give a list of representatives of conjugacy classes of anisotropic tori in Table~\ref{Table:tori}; these are well-known.  There are six when $-1 \in (\ratk^\times)^2$; otherwise, we have $\T_{1,\p} \cong \T_{-1,-\p}$ and $\T_{1,-\p} \cong \T_{-1,\p}$ and there are only four conjugacy classes.  

The last column of Table~\ref{Table:tori} is needed for specifying, among other things, the Moy-Prasad filtration on the torus and its Lie algebra, and is determined as follows.

Suppose $\TT$ is an anisotropic torus of $\GG$ over $\ratk$.  Since $p \neq 2$, the extension $\extk$ over which it splits is tamely ramified and so by \cite[2.6.1]{Tits1979} one can view $\B$ as the subset of $\B(\GG,\extk)$ fixed by $\Gal(\extk/\ratk)$
and identify $\A(\GG,\TT,\ratk) = \A(\GG,\TT,\extk) \cap \B(\GG,\ratk)$.   Since $\TT(\ratk)$ is totally anisotropic, this intersection consists of a single point (which one also sometimes has cause to identify with $\B(\TT,\ratk)$).  

To determine this intersection, we use the identification $\B(\GG,\extk) = (\GG(\extk) \times \A)/\sim$, where $(g,x)\sim (h,y)$ if and only if there exists $n \in N$, the normalizer of $\Stor(\extk)$, so that $n \cdot x  = y$ and $g^{-1}hn \in \GG(\extk)_x$ \cite[(7.4.1)]{BruhatTits1972}.  Since $\TT$ splits over $\extk$, there is a $g\in \GG(\extk)$ such that $\TT = \Stor^g$, and $\A(\GG,\TT,\extk)=g\cdot \A$ is identified with the image of $\{g\} \times \A$ in $\B(\GG,\extk)$.  Write $[g,x]$ for the point in the building corresponding to the equivalence class of the pair $(g,x)$; then for example $[1,x]=x$ in our previous notation.
The Galois group action on $\B(\GG,\extk)$ is given by $\sigma([g,x]) = [\sigma(g),x]$ for each $\sigma \in \Gal(\extk/\ratk)$.  It follows that the Galois-fixed points of $\B(\GG,\extk)$, that is, the elements of $\B(\GG,\ratk)$, are those $[g,x]$ for which $\sigma(g^{-1})g \in \GG(\extk)_x$ for all $\sigma \in \Gal(\extk/\ratk)$.

In our case, the Galois group has order two and we write $\Gal(\extk/\ratk) = \{1,\sigma\}$.  Given $\TT$ such that $\TT(k) = \T_{\gam_1,\gam_2}$ for some $(\gam_1,\gam_2)\in k^2$ as in Table~\ref{Table:tori},  the element
\begin{equation} \label{E:g}
g =\mat{1 & -\frac12 \sqrt{\gam_1\gam_2^{-1}}\\\sqrt{\gam_1^{-1}\gam_2} & \frac12} \in \GG(\extk)
\end{equation}
satisfies $\TT = \Stor^g$.
For these judicious choices of $(\TT,g)$, we see that both $g$ and $\sigma(g^{-1})$ lie in $\GG(\extk)_{y}$ for the given point $y$.  It follows that the unique Galois-fixed point of $\A(\GG,\TT,\extk)$ is $[g,y]=[1,y]=y \in \A(\GG,\TT,\extk)\cap \A$.

\begin{table}
\begin{center}
\renewcommand{\arraystretch}{1.5}
\begin{tabular}{|llc|c|}
\hline
Anisotropic torus & $t(a,b)=$ &  Splitting field   & $\A(\GG,\TT,\ratk)$\\
$\T_{\gam_1,\gam_2}=\TT(\ratk)$ & $\smat{a&b \gam_1 \\ b\gam_2 & a}$ & $\extk$ & $=\{y\}$ \\ \hline \hline
$\T_{1,\ep}$ & $\smat{a & b\\ b\ep & a}$ & $k(\sqrt{\ep})$ &  $y=0$\\
$\T_{\p^{-1},\ep \p} = \T_{1,\ep}^\eta$ & $\smat{a & b\p^{-1}\\ b\ep\p & a}$ &$k(\sqrt{\ep})$& $y=1$\\
\hline
$\T_{1,\p}$ &  $\smat{a & b\\ b\p & a}$ & $k(\sqrt{\p})$ & $y=\frac12$\\ 
$\T_{\ep,\ep^{-1}\p}$ &  $\smat{a & b\ep\\ b\ep^{-1}\p & a}$ & $k(\sqrt{\p})$ &$y=\frac12$\\ \cline{1-3}
$\T_{1,\ep\p}$ &  $\smat{a & b\\ b\ep\p & a}$ &  $k(\sqrt{\ep\p})$ & $y=\frac12$\\ 
$\T_{\ep,\p}$ &  $\smat{a & b\ep\\ b\p & a}$ &$k(\sqrt{\ep\p})$&$y=\frac12$ \\
\hline
\end{tabular}
\caption{Representatives of equivalence classes of anisotropic tori $\TT(\ratk)$ in $\G$ (with $a,b$ symbols denoting elements in $\R$, and where the torus consists of those matrices $t(a,b)$ of determinant one); $\eta$ was defined in \eqref{E:eta}.  Listed are: defining parameters $(\gam_1,\gam_2)$, the splitting field $\extk$ of the torus and the point $y\in \A$ such that $\{y\} = \A(\GG,\TT,\extk)\cap\B$.   If $-1\notin (\ratk^\times)^2$ then $\T_{1,\p} \cong \T_{\ep,\ep^{-1}\p}$ and $\T_{1,\ep\p} \cong \T_{\ep,\p}$.  } \label{Table:tori}
\end{center}
\end{table}


The filtration on the split torus $\Stor(\extk)$ defines one on $\TT(\extk)$ by conjugation \cite[Section 2.6]{MoyPrasad1994}, and hence on $\T = \TT(\ratk)$ 
by restriction to the set of Galois-fixed points.   Since $g\in \GG(\extk)_{y}$  and $\T \subseteq \G_y$, this filtration is simply the intersection with $\T$ of the Moy-Prasad filtration of $\G_{y}$.  In particular, $\T_0=\T$ and for each positive $r\in \Rplus$ 
we have
$$
\T_{r} = \{t(a,b) \in \T \mid a\in \U_{\lceil r \rceil}, b\gam_1 \in \PP^{\lrc{r-y}} \}
$$
when  $\T = \T_{\gam_1,\gam_2}$.
The Lie algebra $\LieT$ of $\T$ is the one-dimensional subalgebra of $\g$ spanned by
$$
X_{\T} = \mat{0 & \gam_1 \\ \gam_2 & 0}.
$$
For any $r \in \Rplus$, the corresponding filtration subring of $\LieT$ is
$$
\LieT_{r} = \{ \aphi X_{\T} \mid \aphi \in \ratk, \; \aphi \gam_1 \in \PP^{\lrc{r-y}} \}.
$$
For any $r \geq 0$, elements $X = \aphi X_\T \in \LieT_{-r}$ satisfying $\val(\aphi \gam_1) = -r-y$ are called \emph{$\G$-generic of depth $r$}.  

\begin{remark}
An element $X \in \LieT_{-r}$ (or more precisely, the element of $\LieT^*$ that it represents via our identification $\LieT \cong \LieT^*$) is $\G$-generic of depth $r$ if it satisfies the two conditions (GE1) and (GE2) of \cite[\S8]{Yu2001}.  Since $p \neq 2$, (GE2) follows from (GE1) by  \cite[Lemma 8.1]{Yu2001}.  Condition (GE1) is that $\val(\Tr(XH_a)) = -r$, where $H_a = d\alpha^\vee(1)$ corresponds to the coroot $\alpha^\vee$ of the unique positive root $\alpha$ of $(\GG,\TT)$.  Explicitly, we have $H_a = \smat{1&0\\0&-1}^{g}$, where $g$ is as defined in \eqref{E:g}, and so we verify directly that (GE1) is equivalent to our stated condition.
\end{remark}

Let $r > 0$.  Then there is a natural group isomorphism $e \colon \LieT_r/\LieT_{r+} \to \T_r/\T_{r+}$.  If $\phi$ is a character of $\T$ of depth $r$, we say that $\phi$ (or $\Res_{\T_r}\phi$) is \emph{$\G$-generic} if there exists a $\G$-generic element $\Aphi \in \LieT_{-r}$ of depth $r$ such that for all $X \in \LieT_{r}$ we have $\phi(e_r(X)) = \Psi(\Tr(\Aphi X))$.  In our case all  positive-depth characters of $\T$ are $\G$-generic.

\section{Supercuspidal representations of $\SL_2(\ratk)$} \label{S:supercuspidals}

The representation theory of $\SL_2(\ratk)$ has been known since the 1960s.  In this section we present the classification of supercuspidal representations of $\SL_2(\ratk)$ following the work of J.~K.~Yu \cite{Yu2001}.  This allows us to exploit the modern language of buildings and to describe this elegant theory in the simplest possible case.   Additional sources for the material in this section include the survey paper of J.~Kim \cite{Kim2009} and the exposition in the paper by J.~Hakim and F.~Murnaghan \cite{HakimMurnaghan2008}.  The supercuspidal representations of so-called degree $1$ (which are the only supercuspidal representations of positive depth to occur in $\SL_2(\ratk)$) are also called toral supercuspidal representations; these were first described in this way by J.~Adler in \cite{Adler1998}.


\subsection{Depth-zero supercuspidal representations}
Depth-zero supercuspidal representations are induced from cuspidal representations of  $\GG(\resk)= \SL_2(\resk)$, which are well-known; see, for example, \cite{DigneMichel1991}.  Briefly: let $\extresk$ denote the unique quadratic extension field of $\resk$, and $N \colon \extresk \to \resk$ the norm map.  To each nontrivial character $\om$ of $\ker(N)$, Deligne and Lusztig associate a representation $\sig = \sig(\om)$ of $\SL_2(\resk)$ of degree $q-1$.  When $\om^2 \neq 1$, this representation is cuspidal (a so-called Deligne-Lusztig representation).  When $\om=\om_0$, the nontrivial quadratic character, the representation $\sig_0 \doteq \sig(\om_0)$ decomposes instead as $\sig_0 = \sig_0^+ \oplus \sig_0^-$, a direct sum of inequivalent cuspidal representations of half the degree.  The choice of label $\pm$ is related to a choice of nontrivial additive character on $\resk$; without loss of generality we may assume the inflation to $\R$ of this character coincides with the restriction of $\Psi$ to $\R$, where $\Psi$ was fixed in Section~\ref{SS:general}. 

Let $\sig$ be a cuspidal representation of $\GG(\resk)$.  Inflate $\sig$ to a representation (also denoted $\sig$) of $\GG(\R)=\K$, and let $\sig^\eta$ denote the corresponding representation of $\K^\eta$.  All depth-zero cuspidal representations of $\K$ and $\K^\eta$ arise in this way and it is known that the compactly induced representations
\begin{equation} \label{E:depthzero}
\cind_{\K}^{\G} \sig \quad \textrm{and} \quad  \cind_{\K^\eta}^{\G} \sig^\eta
\end{equation}
are irreducible, hence supercuspidal, and have depth zero.  Note that their central characters coincide with the restriction of the character $\om$ to the $\{\pm 1\}$ subgroup.

As a special case of the general result for depth-zero supercuspidals found in \cite{MoyPrasad1996, Morris1999}, one has the following well-known fact.

\begin{proposition} \label{P:depthzerolist}
Up to equivalence, any depth-zero supercuspidal representation of $\G$ arises as in \eqref{E:depthzero}, for a unique pair $(\sig,\G_{y})$ of cuspidal representation $\sig$ of $\SL_2(\resk)$ and maximal compact subgroup $\G_{y}$ with $y\in \{0,1\}\subset \A$.
\end{proposition}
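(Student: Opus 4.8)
The plan is to deduce this from the general classification of depth-zero supercuspidal representations via Moy--Prasad theory, as found in \cite{MoyPrasad1996, Morris1999}, specialized to $\G = \SL_2(\ratk)$. The general result says that every depth-zero supercuspidal representation of $\G$ is of the form $\cind_{\G_{[x]}}^{\G}\tilde\sigma$, where $x$ is a vertex of $\B(\GG,\ratk)$, $\G_{[x]}$ is the stabilizer of $x$ in $\G$ (which contains $\G_x$ as a normal subgroup of finite index), and $\tilde\sigma$ is an extension to $\G_{[x]}$ of the inflation of a cuspidal representation $\sigma$ of the finite reductive quotient $\G_x/\G_{x,0+} \cong \GG_x(\resk)$, with the further requirement that the induced representation is irreducible. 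The first step is therefore to recall that, since $\GG = \SL_2$ is semisimple and simply connected and split over $\ratk$, the stabilizer $\G_{[x]}$ of any vertex $x$ coincides with the connected parahoric $\G_x = \G_{x,0}$ (this is exactly the fact cited from \cite[\S3.1]{Tits1979} in Section~\ref{S:Notation}), so no nontrivial extension step is needed: $\tilde\sigma = \sigma$ and $\G_{[x]} = \G_x$.

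The second step is to enumerate the vertices of $\B(\GG,\ratk)$ up to $\G$-conjugacy. For $\SL_2(\ratk)$ the reduced building is a tree, the affine Weyl group acts on the apartment $\A$ with fundamental alcove $[0,1]$, and its two endpoints $y = 0$ and $y = 1$ represent the two $\G$-conjugacy classes of vertices. The associated parahorics are $\G_0 = \SL_2(\R) = \K$ and $\G_1 = \K^\eta$, with reductive quotients $\G_0/\G_{0,0+} \cong \SL_2(\resk)$ and $\G_1/\G_{1,0+} \cong \SL_2(\resk)$. Thus the general classification yields precisely the representations in \eqref{E:depthzero}, parametrized by a cuspidal representation $\sigma$ of $\SL_2(\resk)$ together with the choice $y \in \{0,1\}$, and the irreducibility of each such $\cind$ is exactly the known fact recorded just before the Proposition. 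This establishes that every depth-zero supercuspidal arises as claimed.

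The remaining point is \emph{uniqueness} of the pair $(\sigma, \G_y)$, which is where a small argument is genuinely needed and is the main obstacle. One must show two things: that $\cind_{\K}^{\G}\sigma$ and $\cind_{\K^\eta}^{\G}\sigma'$ are never equivalent, and that $\cind_{\K}^{\G}\sigma \cong \cind_{\K}^{\G}\sigma'$ (and similarly for $\K^\eta$) forces $\sigma \cong \sigma'$. For the first, note that $\K$ and $\K^\eta$ are not conjugate in $\G$ (they represent the two distinct conjugacy classes of maximal compact subgroups), and one invokes the standard Mackey-theoretic fact — part of the general depth-zero classification — that $\cind_{\G_x}^{\G}\sigma$ and $\cind_{\G_{x'}}^{\G}\sigma'$ are equivalent only if there is $g \in \G$ with $g\G_{x}g^{-1} = \G_{x'}$ and $\sigma^g \cong \sigma'$; since no such $g$ exists when $\{x,x'\} = \{0,1\}$, these representations are inequivalent. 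For the second, the same criterion reduces the question to: if $g$ normalizes $\K = \SL_2(\R)$ then $\sigma^g \cong \sigma$. But $\SL_2(\R)$ is its own normalizer in $\SL_2(\ratk)$ (equivalently, the vertex $0$ is fixed by $g$, so $g \in \G_0 = \K$), whence $\sigma^g \cong \sigma$ automatically; likewise for $\K^\eta$. Combining these, the pair $(\sigma, \G_y)$ is uniquely determined, completing the proof. The only subtlety worth spelling out is the self-normalizing property of the maximal parahorics of $\SL_2$, which follows immediately from the tree structure of the building since $g$ must fix the vertex $y$.
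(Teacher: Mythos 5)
Your proposal is correct and takes essentially the same route as the paper: the paper states the proposition with no proof at all, merely citing \cite{MoyPrasad1996, Morris1999} and presenting it as the specialization of the general depth-zero classification to $\SL_2(\ratk)$, and your write-up simply makes explicit what that specialization involves (two conjugacy classes of vertices, $\G_{[x]} = \G_x$ since $\SL_2$ is simply connected, and self-normalization of the maximal parahorics for uniqueness).
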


\subsection{Positive-depth supercuspidal representations}
To construct positive-depth supercuspidal representations, Yu introduced the notion of a \emph{generic tamely ramified cuspidal $G$-datum} \cite[\S 3, \S15]{Yu2001}.  For $\G = \SL_2(\ratk)$, 
these data fall into one of  two kinds.  The first consists of data of so-called degree $0$ which give rise as above to the depth-zero supercuspidal representations.  The second, on the subject of which we will devote the rest of this section, consists of data of degree $1$, and each is specified by: a choice of anisotropic torus $\T$, the corresponding point $y\in \A$ as in Table~\ref{Table:tori}, a positive real number $r$,  and a $\G$-generic character $\phi$ of $\T$ of depth $r$.  We note that if $\T$ is unramified then  $r\in\Z$ whereas if $\T$ is ramified then $r \in \frac12 + \Z$.   We set $s= r/2$.

\begin{remark}
Let us relate the abbreviated data given here to the definition of a cuspidal $G$-datum from \cite[\S3,\S15]{Yu2001} or \cite{HakimMurnaghan2008}.  This is given as a sequence of five axioms (D1) to (D5); we further justify statements (D3) to (D5) below.
\begin{description}
\item[D1] The tamely ramified twisted Levi sequence, which is of degree $1$, is $\vec{\GG} = (\GG^0,\GG^1) = (\TT,\GG)$, where the quotient $Z(\TT)/Z(\GG)$ is anisotropic since $\TT$ is;
\item[D2] The point $y$ lies in (is the only point in) $\A(\GG,\TT,\ratk)$;
\item[D3] The sequence of real numbers $0<r_0 \leq r_1$ may be taken to be $0 < r \leq r$;
\item[D4] The irreducible representation $\rho$ of $\T$, such that $\T_{y,0+}$ is $1$-isotypic and $\cind_{\T}^{\T}\rho$ is supercuspidal, may without loss of generality be taken to be trivial;
\item[D5] The sequence of quasi-characters $(\phi_0,\phi_1)$ may be taken to be $(\phi, 1)$ where $\phi$ is a $\G$-generic character of $\T$ of depth $r$.
\end{description}
In (D3), the value $r_0$ defines the depth of the character $\phi_0$ of $\T$ required in (D5), and as such will be either an integer or a half-integer, depending on the type of torus.  One is permitted to chooose some $r_1 > r_0$, but the purpose of the parameter $r_1$ is to allow a twisting by a quasi-character $\phi_1$ of $\G$ of depth $r_1$.  Since $\G$ admits no nontrivial quasi-characters, no such $\phi_1$ exists, so the convention is to set $r_1=r_0$ and take $\phi_1=1$ in (D5).  The resulting representation has depth $r=r_0$, so we exclude $r=0$ here.  

In (D4), any $\rho$ satisfying these conditions must be a depth-zero character of $\T$ (and any depth-zero character would do).  However, for any such $\rho$ one may obtain the same supercuspidal representation in this construction by replacing $\rho$ with the trivial character and replacing $\phi$ by $\rho\phi$. This fact (which can also be seen directly) follows from the equivalence of cuspidal $G$-data given in \cite[Theorem 6.7]{HakimMurnaghan2008}.
\end{remark}



The idea of the construction of a supercuspidal representation of depth $r$ from such a quadruple $(\T,y,r,\phi)$ is to extend $\phi$ to a (uniquely determined) depth-$r$ representation $\rho$ of the compact open subgroup $\T\G_{y,s}$, whose compact induction to $\G$ is irreducible, and hence supercuspidal. It proceeds as follows.


We denote by $e$ the 
natural isomorphisms of the abelian groups
$$
\LieT_{s+}/\LieT_{r+} \cong \T_{s+}/\T_{r+} \quad \textrm{and} \quad \g_{y,s+}/\g_{y,r+} \cong \G_{y,s+}/\G_{y,r+}.
$$
In matrix form one can approximate $e(X)$ by $I+X$, in the sense that these are congruent modulo the appropriate matrix groups.
 
Since the character $\phi$ of $\T$ has depth $r$, its restriction to $\T_{s+}$ is a character which factors through $\T_{s+}/\T_{r+}$.  All such characters are represented by elements of $\LieT_{-r}$, in the sense that there exists $\Aphi \in \LieT_{-r}\subset \g$ for which
\begin{equation} \label{E:Aphi}
\phi(t) = \Psi(\Tr(\Aphi e^{-1}(t))) \quad \textrm{for all $t \in \T_{s+}$}.
\end{equation}
The image of $\Aphi$ in $\LieT_{-r}/\LieT_{-s}$ is uniquely defined by this relation. 
The genericity of $\phi$ implies that there exists $\aphi \in \ratk$ such that $\Aphi = \aphi X_\T$  and $\val(\aphi\gam_1) = -r-y$.
This element $\Aphi$ also defines a character $\Psi_{\Aphi}$ of $\G_{y,s+}/\G_{y,r+}$ via
$$
\Psi_{\Aphi}(g) = \Psi(\Tr(\Aphi e^{-1}(g))) \quad  \textrm{for all $g \in \G_{y,s+}$}.
$$
Since $\phi$ and $\Psi_{\Aphi}$ agree on the intersection $\T_{s+}$ of their domains, together they give a unique well-defined character $\hat{\phi}$ of $\T\G_{y,s+}$, by setting $\hat{\phi}(tg) = \phi(t)\Psi_{\Aphi}(g)$ for any $t\in \T$, $g\in \G_{y,s+}$.  

In case $\G_{y,s+} = \G_{y,s}$, we are done.  This occurs both when $\T$ is ramified, and when $\T$ is unramified and $r$ is an odd integer, since in these cases  $s$ is a fraction for which $\lrc{s}=\lrc{s+}$ and $\lrc{s\pm y} = \lrc{(s+) \pm y}$.   We thus set $\rho = \hat{\phi}$, which is a representation of $\T\G_{y,s}$ (of degree $1$ and depth $r$).

Otherwise, that is, when $\T$ is unramified and $r$ is even, we have the following lemma.  

\begin{lemma}[Yu] \label{L:Yu}
With the notation above, suppose $\T$ is unramified and $r$ is even.  Then there exists a canonical construction of an irreducible representation $\rho$ of $\T\G_{y,s}$ (of degree $q$) extending $\hat{\phi}$, satisfying:
\begin{enumerate}
\item $\Res_{\ZZ\T_{0+}}\rho$ is $\phi$-isotypic;
\item $\Res_{\G_{y,s+}}\rho$ is $\Psi_\Aphi$-isotypic.
\end{enumerate}
\end{lemma}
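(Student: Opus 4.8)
The plan is to realise $\rho$ through the Heisenberg--Weil construction of \cite[\S11]{Yu2001}, specialised to this rank-one situation; properties (1) and (2) then come out of the construction essentially by design. Throughout write $y=0$ (the point attached to an unramified torus), $s=r/2\in\Z_{>0}$, and abbreviate $H=\G_{y,s}$, $H_+=\G_{y,s+}$, $H_{++}=\G_{y,r+}$. From \eqref{E:Gyr} one reads off that $H/H_+\cong\g_{y,s}/\g_{y,s+}$ (via $X\mapsto$ class of $I+X$) is $3$-dimensional over $\resk$, that $\T_sH_+$ is a normal subgroup of $H$, and that $W:=H/\T_sH_+\cong\g_{y,s}/(\LieT_s+\g_{y,s+})$ is abelian of order $q^2$. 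The idea is to make $W$ into a symplectic $\resk$-space, build a Heisenberg group with underlying space $W$, invoke Stone--von Neumann to obtain an irreducible representation $\tau$ of degree $q$, and then enlarge $\tau$ to a representation of $\T\G_{y,s}$ using the Weil representation of $\mathrm{Sp}(W)$ together with the character $\phi$.

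First I would set up the symplectic structure. Since $[H,H]\subseteq\G_{y,r}\subseteq H_+$, the commutator map of $H$ composed with $\hat\phi\vert_{H_+}$ descends to an alternating pairing $B$ on $W$: well-definedness modulo $H_+$ holds because $\hat\phi$ kills $[H,H_+]\subseteq H_{++}$, and modulo $\T_s$ because $\Tr(\Aphi[Y,X])=\Tr([\Aphi,Y]X)=0$ for $Y\in\LieT$ (as $\LieT$ is abelian). Approximating $e$ by $I+X$ one gets $B(\bar g_1,\bar g_2)=\Psi(\Tr(\Aphi[X_1,X_2]))$ for $g_i=e(X_i)$, and the one genuine computation is that the radical of this pairing on $\g_{y,s}/\g_{y,s+}$ is exactly $(\LieT_s+\g_{y,s+})/\g_{y,s+}$, i.e.\ that $B$ is non-degenerate on $W$. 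This is just a restatement of the genericity of $\Aphi$: the $\g$-centraliser of $\Aphi=\aphi X_{\T}$ is $\LieT$ because $X_{\T}$ is regular semisimple, and the condition $\val(\aphi\gam_1)=-r-y$ is precisely what forces non-degeneracy on the $2$-dimensional quotient.

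Next, put $N=\ker(\hat\phi\vert_{\T_sH_+})$. A short commutator computation, using once more that $[\Aphi,\LieT]=0$ and $\hat\phi(H_{++})=1$, shows that $N$ is normalised by $H$ and by $\T$, that $\T_sH_+/N$ is the centre of $\mathcal H:=H/N$ with $\mathcal H/(\T_sH_+/N)=W$, and that the character of $\T_sH_+/N$ induced by $\hat\phi$ pairs non-degenerately with respect to the commutator (which is the content of the previous paragraph). Stone--von Neumann then gives a unique irreducible $\tau$ of $\mathcal H$ with this central character, of dimension $\sqrt{|W|}=q$; concretely $\tau$ may be taken to be $\Ind$ of an extension of $\hat\phi$ from the abelian subgroup $\T_sH_+\GG_{\alpha}(\ratk)_{y,s}$, whose image in $W$ is a Lagrangian line. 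To enlarge $\tau$ to a representation $\rho$ of $\T\G_{y,s}$, note that $\T$ acts on $\mathcal H$ by conjugation, fixing the centre pointwise (since $\hat\phi$ is $\T$-invariant on $\T_sH_+$) and acting on $W$ symplectically through the finite quotient $\T/\T_{0+}$; since $p\neq 2$ the Weil representation of $\mathrm{Sp}(W)$ over $\resk$ is an honest linear representation, and pulling it back along $\T\to\mathrm{Sp}(W)$ and combining with $\tau$ and with $\phi$ (as in \cite[\S11]{Yu2001}) yields the desired $\rho$. It is irreducible because $\rho\vert_{\G_{y,s}}=\tau$ is, it has degree $q$, and its canonicity is that of $\tau$ (Stone--von Neumann) together with that of the finite Weil representation when $p\neq 2$.

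It then remains to check (1) and (2), which are built in. For (2): $\G_{y,s+}=H_+$ maps into the centre $\T_sH_+/N$ of $\mathcal H$, on which $\tau$ acts by its central character, whose restriction to $H_+$ is $\hat\phi\vert_{H_+}=\Psi_{\Aphi}$; so $\Res_{\G_{y,s+}}\rho$ is $\Psi_{\Aphi}$-isotypic. For (1): $\ZZ=\{\pm I\}$ and $\T_{0+}$ both act trivially on $W$ --- for $\T_{0+}$ because $[\LieT_{0+},\g_{y,s}]\subseteq\g_{y,s+}$, and for $-I$ because $\mathrm{Ad}(-I)$ is the identity --- hence act by the identity operator in the Weil representation, so $\Res_{\ZZ\T_{0+}}\rho$ is governed by $\phi$ alone and is $\phi$-isotypic. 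The main obstacle I foresee is pinning down the Heisenberg group precisely --- identifying its centre and proving that the commutator pairing is non-degenerate, which is the single place where the genericity of $\Aphi$ is used --- and then replacing the a priori only projective action of $\T$ on the Stone--von Neumann representation by a genuine linear one, where the hypothesis $p\neq 2$ is indispensable via the splitting of the finite metaplectic group.
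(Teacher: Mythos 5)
Your argument is correct and takes essentially the same route as the paper: both rest on Yu's Heisenberg--Weil construction from \cite[\S11]{Yu2001}, with your direct observation that $Z$ and $\T_{0+}$ map trivially into $\mathrm{Sp}(W)$ doing exactly the work of \cite[Prop 11.4]{Yu2001}, which the paper cites to handle point (1). You fill in more of the internal machinery (the explicit $2$-dimensional symplectic $\resk$-space $W$, non-degeneracy of the pairing from genericity of $\Aphi$, Stone--von Neumann, the splitting of the finite metaplectic cover when $p\neq 2$) where the paper simply invokes Yu's results; the only things to tighten are that you fix $y=0$ while the unramified torus also sits at $y=1$ (for $\T_{1,\ep}^\eta\subset\K^\eta$), and that you build the Heisenberg quotient from $\G_{y,s}$ rather than from Yu's group $J^1$ (whose toral part is the thinner $\T_r$ rather than $\T_s$) --- harmless here since $\T J^1=\T\G_{y,s}$ and $J^1/J^1_+\cong\G_{y,s}/\T_s\G_{y,s+}$, but worth saying, since the paper's remark after Proposition~\ref{P:split} shows that the two factorizations are not interchangeable later on.
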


The construction of $\rho$, via the Weil representation, is central to \cite{Yu2001}, and we do not repeat it here.  Our lemma is a very slight generalization in that it includes the behaviour of the center.

\begin{proof}[Sketch of proof]
Fix $\T$ and define $g$ as in \eqref{E:g}.  Yu defines the subgroup $J^1$ (respectively, $J^1_+$) as the intersection with $\G$ of the subgroup of $\GG(\extk)$ generated by $\TT(\extk)_r$ and $(\GG_{\pm \alpha}(\extk)_{y,s})^{g}$ (respectively,  $(\GG_{\pm \alpha}(\extk)_{y,s+})^{g}$).   Then one has the identities $\T J^1=\T\G_{y,s}$ and $\T J^1_+=\T\G_{y,s+}$.

In \cite[\S11]{Yu2001}, Yu gives a canonical construction of a representation $\tilde{\phi}$ of $\T \ltimes J^1$, which depends only on $\Res_{\T_r}\phi$, as a pullback of a symplectic action such that
\begin{itemize}
\item  $\Res_{\T_{0+}\ltimes 1}\tilde{\phi}$ is $1$-isotypic, and
\item  $\Res_{1\ltimes J^1_+}\tilde{\phi}$ is  $\Psi_\Aphi$-isotypic.
\end{itemize}
By \cite[Prop 11.4]{Yu2001}, this symplectic action is given by conjugation, so the center $\ZZ$ acts trivially.  Hence  $\Res_{\ZZ\T_{0+}\ltimes 1}\tilde{\phi}$ is also $1$-isotypic.  Using that $\T\cap J^1 \subseteq \T_{r}$, one sees directly that the formula $\rho(tj) = \phi(t)\tilde{\phi}(t,j)$, for $(t,j)\in \T \ltimes J^1$, is well-defined.  It follows from the above that this representation $\rho$ of $\T\G_{y,s}$ has the desired properties.
\end{proof}

The following result is well-known for $\G=\SL_2(\ratk)$.  It is a special case of the general results about positive-depth tamely ramified supercuspidal representations given by the combination of \cite[Prop 4.6]{Yu2001},  \cite[Thm 19.1]{Kim2007} and \cite[Thm 6.7]{HakimMurnaghan2008}.

\begin{proposition}
Let $\rho = \rho(\T,y,r,\phi)$ be as above.
The compactly induced representation 
\begin{equation} \label{E:posdepth}
\cind_{\T\G_{y,s}}^\G \rho
\end{equation} 
is a supercuspidal representation of $\G$ of depth $r$, and all positive-depth supercuspidal representations of $\G$ arise in this way.  Moreover, two such representations are equivalent if and only if the pairs $(\T,\phi)$ occuring in their defining data are $\G$-conjugate.
\end{proposition}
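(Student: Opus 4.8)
The plan is to obtain the three assertions by specializing the general theory of tame supercuspidal representations, relying on the observation from the preceding Remark that the quadruple $(\T,y,r,\phi)$, together with the extension $\rho$ built above, records exactly a generic tamely ramified cuspidal $\GG$-datum of degree one in the sense of \cite[\S3, \S15]{Yu2001}, with open inducing subgroup $\T\G_{y,s}$ (compact modulo $\ZZ$) and inducing representation $\rho$. Since $p\neq 2$, the tameness hypotheses in \cite{Yu2001,Kim2007,HakimMurnaghan2008} hold for $\GG=\SL_2$.

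\emph{Supercuspidality and depth.} First I would invoke \cite[Prop.~4.6]{Yu2001}, which gives that $\cind_{\T\G_{y,s}}^\G\rho$ is irreducible; since $\T\G_{y,s}$ is open and compact modulo $\ZZ$, Mautner's theorem then makes it supercuspidal. For the depth: Lemma~\ref{L:Yu}(2), together with the explicit form of $\hat\phi$ in the case $\G_{y,s+}=\G_{y,s}$, shows that $\Res_{\G_{y,s+}}\rho$ is $\Psi_\Aphi$-isotypic, so $\rho$ is trivial on $\G_{y,r+}$ but, because $\val(\aphi\gam_1)=-r-y$, is nontrivial on $\G_{y,r}\subseteq\T\G_{y,s}$; combined with the construction this pins down the depth of $\cind_{\T\G_{y,s}}^\G\rho$ to be exactly $r$.

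\emph{Exhaustiveness.} By \cite[Thm.~19.1]{Kim2007}, every irreducible supercuspidal representation of $\G$ arises from Yu's construction applied to some generic cuspidal $\GG$-datum, and a positive-depth one cannot come from a degree-zero datum, so its datum has degree $\geq 1$. Now in $\GG=\SL_2$ the proper twisted Levi $\ratk$-subgroups (those occurring in a twisted Levi sequence ending at $\GG$) are exactly the maximal $\ratk$-tori $\TT$, and for such a torus $Z(\TT)/Z(\GG)$ is anisotropic precisely when $\TT$ is anisotropic over $\ratk$; hence a positive-depth datum has degree exactly one with $\GG^0=\TT$ one of the tori of Table~\ref{Table:tori}. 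Axiom (D2) then forces $y$ to be the unique point of $\A(\GG,\TT,\ratk)$; since $\G$ has no nontrivial quasi-character, (D3) and (D5) force $r_1=r_0=:r$ and $\phi_1=1$; and the auxiliary depth-zero representation of $\T$ in (D4) can be absorbed into $\phi$ as explained in the Remark. Thus every positive-depth supercuspidal of $\G$ comes from a quadruple $(\T,y,r,\phi)$ of the form considered, with $r>0$ and $r\in\Z$ (resp.\ $r\in\frac12+\Z$) when $\T$ is unramified (resp.\ ramified).

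\emph{Equivalence.} By \cite[Thm.~6.7]{HakimMurnaghan2008}, two generic cuspidal $\GG$-data yield equivalent supercuspidal representations of $\G$ if and only if they are $\G$-conjugate up to refactorization, a statement that also absorbs the dependence of $\rho$ on the internal choices of Yu's Heisenberg--Weil construction, which by the canonicity asserted in Lemma~\ref{L:Yu} are the only choices present. For degree-one data the sequence of quasi-characters is $(\phi_0,\phi_1)=(\phi,1)$ whose second entry must remain a quasi-character of $\GG$, hence trivial, so refactorization acts only by the trivial character and does nothing. Therefore $\cind_{\T\G_{y,s}}^\G\rho$ and $\cind_{\T'\G_{y',s'}}^\G\rho'$ are equivalent if and only if there is $h\in\G$ with $\TT^h=\TT'$ (which then forces $y^h=y'$, and forces $r=r'$ since equivalent representations have equal depth) and $\phi^h=\phi'$, i.e.\ if and only if the pairs $(\T,\phi)$ and $(\T',\phi')$ are $\G$-conjugate. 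I expect this last step to be the main obstacle: one must carefully match the Hakim--Murnaghan equivalence relation on cuspidal $\GG$-data with the clean condition ``$(\T,\phi)$ up to $\G$-conjugacy'', verifying both that refactorization is vacuous in the length-one case and that the choices internal to the construction of $\rho$ genuinely wash out, while being careful that the operative notion is conjugacy by $\G=\SL_2(\ratk)$ and not by $\GL_2(\ratk)$ --- precisely the distinction separating, say, the data attached to $\T_{1,\ep}$ and to $\T_{1,\ep}^\eta$.
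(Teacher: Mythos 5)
Your proposal is correct and follows the same route as the paper, which offers no proof of its own but simply cites \cite[Prop 4.6]{Yu2001}, \cite[Thm 19.1]{Kim2007}, and \cite[Thm 6.7]{HakimMurnaghan2008} --- exactly the three ingredients (irreducibility and depth from Yu, exhaustion from Kim, equivalence from Hakim--Murnaghan) that you deploy. Your additional observations --- that a positive-depth datum for $\SL_2$ must have degree one with $\GG^0$ an anisotropic torus, and that refactorization reduces to absorbing the depth-zero character of $\T$ into $\phi$ since $\GG$ admits no nontrivial quasi-characters --- are sound and merely unpack what the paper leaves implicit in the Remark immediately preceding the proposition.
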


\subsection{Representations of $\SL_2(\R)$} \label{SS:repR}

In his thesis \cite{Shalika1967}, J.~Shalika constructed all the irreducible representations of $\K=\SL_2(\R)$.  The ones we wish to recall here were those he called ``ramified representations'', and are constructed via Clifford theory.  Our notation here diverges from \cite{Shalika1967} in that: we use the depth, rather than the conductor, as the index (so our indices are off by one); and we use the single fixed quasi-character $\Psi$ and elements of the Lie algebra of negative depth, rather than a collection of additive characters $\eta_k$, $k\geq 0$, and elements of depth zero, in the parametrization.  

Let $d \in \Z_{>0}$ and let $u,v\in \ratk$ be such that $\val(v) > \val(u) = -d$.  Set 
\begin{equation}\label{E:xuv}
X = X(u,v)= \mat{0& u\\v&0} \in \g_{0,-d}.
\end{equation} 
Then the function $g \mapsto \Psi_X(g) = \Psi(\Tr(X(g-I)))$ defines a character of the group
$$
\Gmess = \G_{0,d/2}\cap \G_{\frac12,d/2}= \left\{ \mat{\U_{\lrc{d/2}} & \PP^{\lrc{d/2}} \\ \PP^{\lrc{(d+1)/2}} & \U_{\lrc{d/2}}} \right\}  \cap \G.
$$
Note that $u$ and $v$ are uniquely determined by the character $\Psi_{X}$ only modulo $\PP^{\lrc{-d/2}}$ and $\PP^{\lrc{(-d+1)/2}}$, respectively.
The normalizer of $\Psi_X$ in $\K$ is $T(X)\Gmess$, where $T(X)$ is the centralizer of $X$ in $\K$, namely
$$
T(X) = \left\{ t(a,b)= \left.\left[ \begin{matrix} a & b \\ bu^{-1}v & a \end{matrix} \right] \right| a,b\in \R, a^2-b^2u^{-1}v =1 \right\} = \T_{1,u^{-1}v}.
$$
Then Shalika proved the following \cite[Theorems 4.2.1 and 4.25]{Shalika1967}.

\begin{theorem}[J.~Shalika]  \label{T:Shalika}
With the notation above, let $\theta$ be a character of $T(X)$ which coincides with $\Psi_X$ on the intersection $T(X) \cap \Gmess$.  Write $\Psi_{\theta,X}$ for the unique character of $T(X)\Gmess$ which extends $\theta$ and $\Psi_X$. Then 
$$
\Sh_d(\theta,X) := \Ind_{T(X)\Gmess}^\K \Psi_{\theta,X}
$$
is an irreducible representation of $\K$ of depth $d$ and of degree $\frac12 q^{d-1}(q^2-1)$.
For varying $X$ and $\theta$ as above, these representations exhaust all irreducible representations of $\K$ of this depth and degree.
\end{theorem}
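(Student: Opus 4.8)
The assertion has two parts: that each $\Sh_d(\theta,X)$ is irreducible of depth $d$ and degree $\tfrac12 q^{d-1}(q^2-1)$, and that every irreducible representation of $\K$ with those two invariants arises this way. I would treat both by Clifford theory relative to the congruence filtration $\K = \G_{0,0} \supset \G_{0,1} \supset \G_{0,2} \supset \cdots$, the basic input being that each $\G_{0,m}$ is normal in $\K$, that $\G_{0,d}/\G_{0,d+1}$ is abelian (since $2d \ge d+1$) and canonically identified with $\g_{0,-d}/\g_{0,-d+1} \cong \mathfrak{sl}_2(\resk)$ via $g \mapsto g-I$, and that pairing with the trace form and $\Psi$ identifies its character group with the same space, on which $\K$ acts through the conjugation action of $\SL_2(\resk)$.

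\emph{Irreducibility, depth and degree of $\Sh_d(\theta,X)$.} Since $\Psi_{\theta,X}$ is one-dimensional, $\dim\Sh_d(\theta,X)=[\K:T(X)\Gmess]$, which I would read off directly from the explicit matrix descriptions, treating the two parities of $d$ separately (for $d$ odd, $\Gmess=\G_{0,(d+1)/2}$; for $d$ even, $\Gmess$ is the subgroup of index $q$ in $\G_{0,d/2}$ obtained by shrinking the lower-left entry by one further power of $\PP$); a parallel short filtration computation in the torus $T(X)=\T_{1,u^{-1}v}$ evaluates $[T(X):T(X)\cap\Gmess]$, and the quotient comes out to $\tfrac12 q^{d-1}(q^2-1)$, independently of the admissible choice of $(u,v)$. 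The depth is $d$ directly from the fact that $\Psi_X$ is nontrivial on $\Gmess\cap\G_{0,d}$ but trivial on $\Gmess\cap\G_{0,d+1}$ (using $X\in\g_{0,-d}\setminus\g_{0,-d+1}$) and that compact induction to $\K$ does not lower depth. For irreducibility I would invoke Mackey's criterion: $\Sh_d(\theta,X)$ is irreducible iff for every $g$ representing a double coset $HgH\neq H$, with $H:=T(X)\Gmess$, the one-dimensional characters $\Psi_{\theta,X}$ and $\Psi_{\theta,X}^g$ of $H\cap H^g$ are distinct. As $H\cap H^g\supseteq\Gmess\cap\Gmess^g$ and there the two restrict to $\Psi_X$ and $\Psi_{gXg^{-1}}$, they already differ unless $gXg^{-1}\equiv X$ modulo the trace-form annihilator of $\Gmess\cap\Gmess^g$; parametrising $g$ by a Bruhat/Cartan decomposition, the explicit form of $\Gmess$ confines this condition to $g\in H$ together with a small explicit list of further representatives, and on those the genericity of $\theta$ on $T(X)$ separates the characters. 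That $T(X)\Gmess$ is exactly the $\K$-normaliser of $\Psi_X$ is precisely what makes this last step a finite check.

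\emph{Exhaustion.} Let $(\pi,V)$ be irreducible of depth $d$ and degree $\tfrac12 q^{d-1}(q^2-1)$. Clifford theory at $\G_{0,d}\triangleleft\K$ shows $\Res_{\G_{0,d}}\pi$ is isotypic over a single $\SL_2(\resk)$-orbit $\mathcal O$ of characters of $\G_{0,d}/\G_{0,d+1}$, nonzero by the depth hypothesis, hence the orbit of the character $\chi$ attached to some nonzero $\bar Y\in\mathfrak{sl}_2(\resk)$ — nilpotent, split regular semisimple, or anisotropic regular semisimple, of sizes $\tfrac{q^2-1}{2}$, $q(q+1)$, $q(q-1)$. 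Writing $\pi=\Ind_{S}^\K\tilde\pi$ with $S=\mathrm{Stab}_\K(\chi)\supseteq\G_{0,1}$ and $\tilde\pi$ irreducible and $\chi$-isotypic on $\G_{0,d}$, and iterating Clifford theory down $\G_{0,d-1}\supset\cdots\supset\G_{0,1}$ — at each step a two-step nilpotent quotient whose commutator pairing composed with the running character has one-dimensional radical, since every nonzero element of $\mathfrak{sl}_2(\resk)$ is regular, and therefore contributes a Heisenberg factor of exactly $q$ — one expresses $\dim\pi$ as $|\mathcal O|$ times a power of $q$ at most $q^{d-1}$. Matching this with $\tfrac12 q^{d-1}(q^2-1)$ forces $|\mathcal O|=\tfrac{q^2-1}{2}$ and the maximal chain: $\bar Y$ is nilpotent, $\tilde\pi$ stays one-dimensional all the way down, and $\pi$ is induced from the stabiliser of the running character's extension $\Psi_X$, namely $\mathrm{Stab}_\K(\Psi_X)=T(X)\Gmess$, with a one-dimensional inducing datum. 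A lift $X$ of the nilpotent $\bar Y$ is conjugated by a suitable lower-triangular element of $\K$ into the form $X(u,v)$ of \eqref{E:xuv}, so after that conjugation $\pi\cong\Ind_{T(X)\Gmess}^\K\Psi_{\theta,X}=\Sh_d(\theta,X)$ for the character $\theta$ of $T(X)$ read off from the inducing datum.

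\emph{Main obstacle.} The delicate point is the degree bookkeeping in the exhaustion step: one must control how the dimension accumulates level by level down the congruence filtration and show that the comparatively small target $\tfrac12 q^{d-1}(q^2-1)$ is compatible only with a nilpotent leading orbit and with the shortest possible inducing subgroup. Making the iterated Clifford/Heisenberg analysis precise — identifying the stabilisers, commutator pairings and Weil representations at each level, and disposing of the few low-$q$ numerical near-coincidences — is the technical heart and is essentially the content of Shalika's thesis; alternatively one can circumvent it by quoting Shalika's complete classification of the irreducible representations of $\SL_2(\R)$ and selecting those of depth $d$ and degree $\tfrac12 q^{d-1}(q^2-1)$. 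By contrast the Mackey computation for irreducibility is routine once the normaliser statement is in hand.
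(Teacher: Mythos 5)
Your proposal is correct and follows essentially the same route as the paper. The paper's own argument is a brief ``Comments on the proof'': depth and degree are obtained by the same direct computations you describe, irreducibility by the same Clifford/Mackey argument via the normalizer $T(X)\Gmess$, and exhaustion is not reproved but simply cited to Shalika's Lemma~4.2.2 (orbit representatives in $\g_{0,0}$) and \S4.3 (completeness of the list) --- exactly the fallback you acknowledge at the end. Your iterated-Clifford sketch of the exhaustion is a fair description of what sits inside Shalika's \S4.3, and you rightly flag the orbit-size and degree bookkeeping (and the low-$q$ near-coincidences) as the technical heart that the paper does not reproduce. One small index slip to fix: the map $g \mapsto g - I$ identifies $\G_{0,d}/\G_{0,d+1}$ with $\g_{0,d}/\g_{0,d+1}$; it is the Pontryagin dual of this quotient, via $\Psi\circ\Tr$, that is parametrized by $\g_{0,-d}/\g_{0,-d+1}$.
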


\begin{proof}[Comments on the proof]
Since $\Psi_{\theta,X}$ is an extension of the character $\Psi_X$ to its normalizer, by an argument in Clifford theory, one deduces that $\Sh_d(\theta,X)$ is irreducible.  Since $\Psi_X$ is trivial on $\G_{0,d+}$ and nontrivial on $\G_{0,d}$, we deduce that the depth of $\Sh_d(\theta,X)$ as a representation of $\K = \G_{0}$ is $d$. Its degree is calculated directly.  

For the last statement, note that for all $g\in \K$ we have $\Sh_d(\theta,X)\cong \Sh_d(\theta^g,X^g)$. Shalika lists  representatives of all $\K$-orbits in $\g_{0,0}$ in \cite[Lemma~4.2.2]{Shalika1967}.  Our set of elements $\{X(u,v)\mid u\in \PP^{-d}\setminus\PP^{-d+1}, v\in \PP^{-d+1}\}$ was chosen to meet all those orbits corresponding to Shalika's ramified representations.  Shalika shows that these exhaust all irreducible representations of $\K$ of the given depth and degree in \cite[\S4.3]{Shalika1967}.
\end{proof}

\section{Restriction to $\K$} \label{S:restriction}

One of the main tools for decomposing restrictions of induced representations is Mackey theory. Applying versions of Frobenius reciprocity \cite[1.5(33)]{Cartier1979} and Mackey theory  for compactly induced representations \cite{Kutzko1977} yields the following result.

\begin{lemma} \label{L:Hansen}
Let $G$ be the $\ratk$-points of a linear algebraic group defined over $\ratk$.
Let $H$ be a compact-mod-center subgroup of $G$ and $\rho$
a smooth representation of $H$ such that the compactly induced representation
$\pi = \cind_H^G \rho$ is admissible.  Let $K$ be a compact open subgroup of $G$.  Then
the restriction of $\pi$ to $K$
decomposes into a direct sum of (not necessarily
irreducible) representations induced from subgroups of $K$ as
\begin{equation} \label{E:decomp}
\Res_K \cind_H^G \rho \cong \bigoplus_{\alpha \in K\backslash G / H} \Ind_{K \cap H^\alpha}^K \rho^\alpha.
\end{equation}
\end{lemma}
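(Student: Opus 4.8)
The plan is to decompose the space of $\pi = \cind_H^G\rho$ according to the partition of $G$ into $(H,K)$-double cosets and to recognize each resulting $K$-stable summand as one of the terms on the right of \eqref{E:decomp}. Fix a set of representatives $\{\alpha\}$ for $K\backslash G/H$; then $\{\alpha^{-1}\}$ represents $H\backslash G/K$, so $G = \bigsqcup_\alpha H\alpha^{-1}K$, a disjoint union of sets stable under left translation by $H$ and right translation by $K$. Each $f$ in the space of $\pi$ is left $H$-equivariant, hence its support is a union of right $H$-cosets; restricting $f$ to $H\alpha^{-1}K$ and extending by zero again gives a smooth, left $H$-equivariant function supported compactly mod $H$, so the subspaces $V_\alpha = \{\,f : \mathrm{supp}(f)\subseteq H\alpha^{-1}K\,\}$ lie in the space of $\pi$ and are preserved by the right action of $K$. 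First I would check that the $V_\alpha$ give an \emph{internal} direct sum exhausting the space: since $f$ is compactly supported mod $H$, its image in $H\backslash G$ is compact, and that image is covered by the pairwise disjoint open subsets that are the images of the $H\alpha^{-1}K$; hence only finitely many of them meet $\mathrm{supp}(f)$, the decomposition $f = \sum_\alpha f|_{H\alpha^{-1}K}$ is a finite sum, and it is clearly unique.

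The second step is to identify $V_\alpha \cong \Ind_{K\cap H^\alpha}^K\rho^\alpha$ as a representation of $K$. I would use the map $f \mapsto \phi_f$, $\phi_f(k) = f(\alpha^{-1}k)$ for $k\in K$. For $x\in K\cap H^\alpha$ one writes $x = \alpha h\alpha^{-1}$ with $h\in H$ and computes $\phi_f(xk) = f(h\alpha^{-1}k) = \rho(h)\phi_f(k) = \rho^\alpha(x)\phi_f(k)$, so $\phi_f$ lies in the space of $\Ind_{K\cap H^\alpha}^K\rho^\alpha$; it is smooth because $f$ is; and the right $K$-actions match, since $\phi_{\pi(k_0)f}(k) = f(\alpha^{-1}kk_0) = \phi_f(kk_0)$. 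The inverse sends $\phi$ to the function supported on $H\alpha^{-1}K$ defined by $h\alpha^{-1}k \mapsto \rho(h)\phi(k)$; the point requiring care is well-definedness, which holds because if $h\alpha^{-1}k = h_1\alpha^{-1}k_1$ then $x := \alpha h_1^{-1}h\alpha^{-1} = k_1k^{-1}$ lies in $K\cap H^\alpha$ and $\rho(h)\phi(k) = \rho(h_1)\rho^\alpha(x)\phi(k) = \rho(h_1)\phi(xk) = \rho(h_1)\phi(k_1)$. Assembling the isomorphisms $V_\alpha \cong \Ind_{K\cap H^\alpha}^K\rho^\alpha$ over all $\alpha$ yields \eqref{E:decomp}; note that since $K$ is compact there is no difference between $\Ind$ and $\cind$ from the open subgroup $K\cap H^\alpha$, so this is consistent with the smooth induction on the right-hand side.

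None of the individual verifications above is hard; the main obstacle is really the bookkeeping — which double-coset space indexes the sum, whether one lands on $\rho^\alpha$ or $\rho^{\alpha^{-1}}$, and on which side $K$ acts in each induced space — together with the topological fact that the possibly infinite family $\{V_\alpha\}$ genuinely decomposes the whole space. That last fact is exactly where the standing hypotheses ($\rho$ smooth, $K$ compact open, support compact mod $H$) are used, and it is the substance of Mackey theory for compactly induced representations as in \cite{Kutzko1977}; an alternative to the self-contained argument sketched here is simply to invoke that result together with the form of Frobenius reciprocity in \cite[1.5(33)]{Cartier1979}. Finally, admissibility of $\pi$ — inherited by $\Res_K\pi$ — is what ensures the resulting direct sum is itself admissible, i.e.\ that each irreducible representation of $K$ occurs in only finitely many of the summands $\Ind_{K\cap H^\alpha}^K\rho^\alpha$.
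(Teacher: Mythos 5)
Your proof is correct. The paper itself gives no argument for this lemma beyond citing Frobenius reciprocity and Kutzko's Mackey theory for compactly induced representations; what you have written is precisely the standard double-coset decomposition argument underlying those citations, with the routine verifications (that the $V_\alpha$ are $K$-stable, that compact support mod $H$ forces only finitely many double cosets to meet the support, and that restriction to $H\alpha^{-1}K$ followed by translation by $\alpha^{-1}$ realizes the isomorphism $V_\alpha\cong\Ind_{K\cap H^\alpha}^K\rho^\alpha$) all carried out correctly, so it is the same approach spelled out rather than delegated.
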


Let us find representatives of these double cosets and the depths and degrees of these Mackey components in each of the depth-zero and positive depth cases.

\subsection{Depth-zero case: double coset representatives, depth and degree}

From the Cartan decomposition of $\G$, and a short calculation, we see that a set of double coset representatives for either $\K \backslash \G / \K$ or $\K \backslash \G / \K^\eta$ is
\begin{equation} \label{E:doublecosetrepsK}
\left\{\left. \alp^t \doteq \mat{\p^{-t} & 0 \\ 0 & \p^t} \right| t \geq 0 \right\}.
\end{equation}
Thus by Lemma~\ref{L:Hansen} the depth-zero supercuspidal representations decompose as
$$
\Res_{\K}\cind_{\K}^{\G} \sig \cong \bigoplus_{t \geq 0} \Ind_{\K \cap \K^{\alp^t}}^\K \sig^{\alp^t} \quad \textrm{and} \quad 
\Res_{\K}\cind_{\K^\eta}^{\G} \sig^\eta \cong \bigoplus_{t \geq 0} \Ind_{\K \cap \K^{\alp^t\eta}}^\K \sig^{\alp^t\eta}.
$$
For each $t>0$ the element $z_t=\p^tI$ satisfies $\alp^t = z_t\eta^{2t}$; since $z_t$ lies in the center of $\GL_2(\ratk)$, the conjugated representations $\sig^{\alp^t}$ and $\sig^{\eta^{2t}}$ are equal.  Furthermore, for any $t>0$, $\K \cap \K^{\eta^t} = B\K_{t}$, where $B$ denotes the set of upper triangular matrices in $\K$; that is, elements of $B\K_t$ can be represented by matrices $(a_{ij}) \in \K$ such that $a_{21}\in \PP^t$.  Consequently, the above decompositions may be rewritten as
\begin{equation} \label{E:depthzerodecomp}
\Res_{\K}\cind_{\K}^{\G} \sig \cong \sig \oplus \bigoplus_{t > 0} \Ind_{B\K_{2t}}^\K \sig^{\eta^{2t}} \quad \textrm{and} \quad 
\Res_{\K}\cind_{\K^\eta}^{\G} \sig^\eta \cong \bigoplus_{t \geq 0} \Ind_{B\K_{2t+1}}^\K \sig^{\eta^{2t+1}}.
\end{equation}

\begin{lemma} \label{L:depth}
Let $\sig$ be a depth-zero irreducible cuspidal representation of $\K$.  Then for any $d\geq 0$, the maximum depth of any irreducible component of $\Ind_{B\K_d}^\K \sig^{\eta^d}$ is $d$.
\end{lemma}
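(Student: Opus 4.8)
The plan is to analyze the depth of $\Ind_{B\K_d}^\K \sig^{\eta^d}$ by computing which filtration subgroups $\K_m = \G_{0,m}$ have nonzero fixed vectors in the induced representation, and to do this via Mackey theory applied to the restriction $\Res_{\K_m}\Ind_{B\K_d}^\K \sig^{\eta^d}$. By Frobenius reciprocity, an irreducible component of depth $\leq m$ exists precisely when $\Ind_{B\K_d}^\K \sig^{\eta^d}$ contains a nonzero vector fixed by $\K_{m+}$; so I want to show that $\K_{d+}$ has no fixed vectors (giving depth $\geq d$ for every component, hence the ``maximum depth is $\geq d$'' direction, which combined with an upper bound gives equality), and that $\K_{d+}$-fixed vectors... wait — more carefully, the statement is that the maximum is exactly $d$, so I need both that no component has depth exceeding $d$ and that some component achieves depth $d$. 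For the upper bound I want: $\Ind_{B\K_d}^\K\sig^{\eta^d}$ has a nonzero $\K_{d+}$-fixed vector, equivalently depth $\leq d$ for at least one — actually for the \emph{maximum} to be $\leq d$ I need \emph{every} component to have depth $\leq d$, i.e. the whole space is generated appropriately; the cleanest formulation is: the space of $\K_{d+}$-fixed vectors is nonzero and the representation is generated by... Let me instead argue directly on both inequalities.

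First I would establish the \textbf{upper bound}: every irreducible component of $\Ind_{B\K_d}^\K \sig^{\eta^d}$ has depth at most $d$. For this it suffices to show $V^{\K_{d+}} \neq 0$ where $V$ is the space of the induced representation and, in fact, that $V$ is spanned by translates of $\K_{d+}$-fixed vectors — but since $\K_{d+}$ is normal in $\K$, the space $V^{\K_{d+}}$ is a $\K$-subrepresentation, so it suffices to show it is all of $V$, i.e. that $\K_{d+}$ acts trivially on the whole induced space. A function $f$ in the induced space is determined by its values on coset representatives for $B\K_d \backslash \K$; the action of $k \in \K_{d+}$ sends $f$ to $g\mapsto f(gk)$, and one checks using $\K_{d+} \subseteq \K_d$ and the normality/containment relations (together with $\sig^{\eta^d}$ being trivial on $(\K_d)_{0+}$-type subgroups, as $\sig$ is depth-zero and so $\sig^{\eta^d}$ is trivial on $\G_{0,0+}^{\eta^d} \cap \K_d = \K_{d+}$-ish subgroups) that this action is trivial. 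The key computational point is that $\K_{d+} = \G_{0,d+}$ is contained in $B\K_d$ up to conjugation issues, and that $\sig^{\eta^d}$ restricted to $\K_d \cap (\text{relevant group})$ is inflated from $\SL_2(\resk)$ via a map killing $\K_{d+}$. I would verify this by the explicit matrix description: $\eta^d$ conjugation scales the off-diagonal entries by $\p^{\mp d}$, so $\sig^{\eta^d}$ on $B\K_d$ factors through $B\K_d/\K_{d+}$, giving the triviality of the $\K_{d+}$-action.

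Next, the \textbf{lower bound}: some component has depth exactly $d$, equivalently $V^{\K_d} \subsetneq V$, i.e. $\K_d$ does \emph{not} act trivially. Again using that $V^{\K_d}$ is a subrepresentation, I want to exhibit a vector moved by some element of $\K_d$. The natural candidate is a function supported on the identity double coset; its transformation under $k \in \K_d \setminus \K_{d+}$ is governed by $\sig^{\eta^d}(k)$ for $k$ in a conjugate of $\SL_2(\R)$, and the cuspidality (hence nontriviality on the unipotent radical reduction) of $\sig$ means $\sig^{\eta^d}$ is \emph{not} trivial on $B\K_d \cap \K_d \pmod{\K_{d+}}$ — precisely, $\sig^{\eta^d}$ restricted to $B\K_d$ descends to $B\K_d / \K_{d+} \cong B(\resk)\cdot(\text{unipotent})$, and a cuspidal representation of $\SL_2(\resk)$ is nontrivial on the Borel's unipotent radical. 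This produces the required depth-$d$ vector.

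The main obstacle I anticipate is bookkeeping: carefully tracking how $\eta^d$-conjugation interacts with the filtration subgroups $\K_d$, $\K_{d+}$ and the group $B\K_d$, and confirming that the ``inflation'' picture — $\sig^{\eta^d}|_{B\K_d}$ factoring through $B\K_d/\K_{d+}$ but not through $B\K_d/\K_d$ — holds with the correct indices (there is an off-by-one risk between $\K_d$ and $\K_{d+}$, and between depth and conductor, exactly the normalization issue flagged in Section~\ref{SS:repR}). Everything else is a direct application of Frobenius reciprocity together with the normality of $\K_{d+}$ and $\K_d$ in $\K$, which turns ``has a fixed vector'' into ``acts trivially on everything'' and makes both bounds into finite, checkable statements about the representation $\sig$ of $\SL_2(\resk)$.
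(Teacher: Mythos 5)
Your proposal is correct and follows essentially the same route as the paper's proof: reformulate the maximum depth as the least $n$ for which $\K_{n+1}\cap B\K_d\subseteq\ker(\sig^{\eta^d})$ (using normality of the congruence subgroups), then verify the upper bound by the explicit $\eta^{-d}$-conjugation computation and the lower bound by exhibiting lower-triangular unipotents in $\K_d$ on which $\sig^{\eta^d}$ is nontrivial. The only minor difference is the justification for the lower bound: you invoke cuspidality of $\sig$ (nontriviality on the unipotent radical), whereas the paper uses the slightly weaker fact that $U^{op}$ lies in no proper normal subgroup of $\K$, which only requires $\sig$ to be nontrivial.
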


\begin{proof}
When $d=0$ the summand is simply $\sig$, which has zero depth by hypothesis.  Let $d>0$. Note that the maximum depth of an irreducible component of an induced representation $V$ of $\K$ is the least integer $n \geq 0$ such that $V = V^{\K_{n+1}}$, equivalently, such that the intersection of $\K_{n+1}$ with the inducing subgroup lies in the kernel of the representation being induced. Let $B^{op}$ denote the subgroup of lower triangular matrices in $\K$.

Given $\sig$ a nontrivial irreducible representation of $\K$ of depth zero, we note that the value of $\sig^{\eta^d}$ on elements of $B\K_d$ is determined by that of $\sig$ on $B^{op}$.  That is, for $a = (a_{ij})\in B\K_d$
we have 
\begin{equation} \label{E:match}
\sig^{\eta^d}(a) = \sig\left(a^{\eta^{-d}}\right) = \sig\left(\mat{a_{11}&a_{12}\p^{d}\\ a_{21}\p^{-d} & a_{22}}\right) 
=  \sig\left(\mat{a_{11}&0\\ a_{21}\p^{-d} & a_{22}}\right)
\end{equation}
since $a_{12}\p^{d}\in \PP$ and $\sig$ is trivial on $K_1$.
If $n>d$ then $\K_n \cap B\K_{d} \subseteq \ker(\sig^{\eta^d})$ since in this case $a \equiv I \mod \PP^n$ and so $a^{\eta^{-d}} \equiv I$ mod $\PP$.  Conversely, since the subgroup $U^{op} = \left\{\smat{1 & 0 \\ c & 1} \mid c\in \R^\times \right\}$ is contained in no proper normal subgroup of $\K$ but  $(U^{op})^{\eta^d} \subseteq \K_d \cap B\K_{d}$, we conclude that $ \K_d \cap B\K_{d} \not\subset \ker(\sig^{\eta^d})$.  Thus the maximal depth is $d$.  
\end{proof}

Finally, note that for any $d>0$,  $B\K_d$  has index $(q+1)q^{d-1}$ in $\K$. Hence 
\begin{equation}\label{E:depthzerodegree}
\deg\left(\Ind_{B\K_d}^\K \sig^{\eta^d}\right) = (q+1)q^{d-1} \deg(\sig),
\end{equation}
where if $\sig \in \{\sig_0^\pm\}$ then $\deg(\sig) = (q-1)/2$, but for all other $\sig$, we have $\deg(\sig)=q-1$.

In Section~\ref{S:depthzero}, we will use this as a starting point to give the complete decomposition of depth-zero supercuspidal representations into irreducible $\K$-representations.

\subsection{Positive depth case: double coset representatives}
Let us now turn to the positive-depth case.  
To determine double coset representatives for $\K \backslash \G / \T\G_{y,s}$, for the various triples $(\T, y, s)$ that  occur in the construction, we begin with the following observation.  

\begin{lemma} \label{L:doublecosets}
Let $\T$ be one of the tori in Table~\ref{Table:tori} such that $\T \subset \K$.
Choose $x,y \in \R$ satisfying $x^2-y^2\ep = \ep$.  Set
$$
\dcrep = \begin{cases}
\ecrep = \mat{x & y \\ y & \ep^{-1}x} & \textrm{if $\T$ is unramified, and}\\
\w = \mat{0 & 1 \\ -1 & 0} & \textrm{if $\T$ is ramified.}\
%
\end{cases}
$$
Then $\Lambda(\T) = \{ I, \dcrep\}$ is a set of representatives of $B^{op}\backslash \K / \T$.  Similarly, $\Lambda(\T_{1,\ep}^\eta) = \{I,\ecrep^\eta\}$ is a set of representatives of $B^{op}\backslash \K^\eta / \T_{1,\ep}^\eta$.
\end{lemma}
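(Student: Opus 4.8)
The plan is to compute the double coset space $B^{op}\backslash \K/\T$ directly using the Iwasawa-type decomposition of $\K = \SL_2(\R)$, exploiting that $B^{op}$ is a large (codimension-one) subgroup so the quotient $B^{op}\backslash\K$ is essentially a copy of $\mathbb{P}^1(\resk)$ refined by congruence data. First I would note that $\K$ acts on the set of lines (rank-one $\R$-submodules which are direct summands) in $\R^2$; the stabilizer of the line $\R\cdot {}^t(0,1)$ (spanned by the second standard basis vector) is exactly $B^{op}$, the lower-triangular matrices in $\K$. Hence $B^{op}\backslash\K$ is identified with the orbit of this line, and in fact with the full set of such lines, which is $\mathbb{P}^1(\R/\PP) = \mathbb{P}^1(\resk)$ lifted appropriately — more precisely, $\K$ acts transitively on primitive vectors mod $\R^\times$, so $B^{op}\backslash\K \cong \{\text{primitive lines in }\R^2\}$. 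Then finding representatives for $B^{op}\backslash\K/\T$ amounts to computing the orbits of $\T$ (acting on the right) on this set of lines.

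Next, for each torus type I would make the computation of $\T$-orbits on lines explicit using the matrix form $t(a,b) = \smat{a & b\gam_1\\ b\gam_2 & a}$ from Table~\ref{Table:tori}. A line is determined by a primitive vector ${}^t(c_1,c_2)$ up to scaling; applying $t(a,b)^{-1}$ (or working with the right action directly) moves it to ${}^t(ac_1 - b\gam_1 c_2,\ -b\gam_2 c_1 + a c_2)$ up to sign, since $\det t(a,b)=1$. The key arithmetic input is whether, for a given line, there is a torus element carrying it to the "base line" $\R\cdot{}^t(0,1)$ — i.e. whether one can solve $ac_1 = b\gam_1 c_2$ with $a^2 - b^2\gam_1\gam_2 = 1$ — and this is controlled by whether $c_1^2\gam_2 - c_2^2\gam_1$, or the relevant ratio, is a norm/square in the appropriate quadratic extension. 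Because $\T$ is anisotropic, the norm form is anisotropic, and one finds there are exactly two orbits: the orbit of ${}^t(0,1)$ itself (represented by $I$), and one other orbit. I would then check that the stated element $\dcrep$ (namely $\ecrep = \smat{x&y\\y&\ep^{-1}x}$ in the unramified case, $\w = \smat{0&1\\-1&0}$ in the ramified case) has first column representing a line in that second orbit — in the ramified case $\w$ sends ${}^t(0,1)$ to ${}^t(1,0)$, and one checks $\T$ (with $\gam_1\in\{\p,\ep\p\}$-type entries) does not carry ${}^t(1,0)$ to ${}^t(0,1)$ because that would force a unit to equal $\p$ times a unit; in the unramified case the condition $x^2 - y^2\ep = \ep$ guarantees $\ecrep\in\K$ and that the resulting line is not in the base orbit because $\ep$ is a nonsquare. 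Finally, the claim for $\Lambda(\T_{1,\ep}^\eta) = \{I,\ecrep^\eta\}$ in $B^{op}\backslash\K^\eta/\T_{1,\ep}^\eta$ follows by transporting the entire computation through conjugation by $\eta$: since $\eta$ normalizes nothing relevant but conjugation by $\eta$ carries $\K$ to $\K^\eta$, $\T_{1,\ep}$ to $\T_{1,\ep}^\eta$, and $B^{op}$ to $(B^{op})^\eta$, and one checks $(B^{op})^\eta \cap \K^\eta = B^{op}$-analogue inside $\K^\eta$, so the double coset space is in bijection with the previous one via $\alpha\mapsto \alpha^\eta$.

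The main obstacle I expect is the norm/square bookkeeping in the second step: one must verify case-by-case (unramified torus, and the several ramified tori $\T_{1,\p}, \T_{\ep,\ep^{-1}\p}, \T_{1,\ep\p}, \T_{\ep,\p}$) that exactly one nontrivial orbit arises and that the chosen $\dcrep$ lands in it, which requires carefully tracking squares versus nonsquares and units versus uniformizer-valued entries. A secondary subtlety is confirming that $B^{op}\backslash\K$ is genuinely a \emph{set of lines} rather than something finer — i.e. that $B^{op}$ already absorbs all the congruence data, which is true precisely because $B^{op}$ contains the full lower unipotent radical $U^{op}$ with $\R^\times$ entries and the diagonal torus, so that $B^{op}$ together with the upper unipotent generates $\K$ with the "missing" coset data being exactly $\mathbb{P}^1$ worth. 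Once the orbit count is pinned down to two for every torus in the list, the identification of representatives and the transfer via $\eta$ are routine.
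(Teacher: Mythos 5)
Your approach is correct and would produce a valid proof, but it packages the argument geometrically where the paper does it by a direct algebraic invariant. The paper attaches to each $g=(g_{ij})\in\K$ the quantity $N(g) = g_{11}^2 - \gamma_2\gamma_1^{-1}g_{12}^2$, observes that $N$ is constant up to $(\R^\times)^2$ on each double coset, and shows by an explicit factorization $g=bt$ that the identity coset $B^{op}\T$ is exactly $\{g : N(g)\in(\R^\times)^2\}$; it then handles the complement by exhibiting a concrete $t\in\T$ and $b\in B^{op}$ with $g = b\dcrep t$, split by ramification type. Your plan instead identifies $B^{op}\backslash\K$ with $\mathbb{P}^1(\R)$ (primitive lines, with $B^{op}$ the line stabilizer — note the initial aside about ``$\mathbb{P}^1(\resk)$ refined by congruence data'' is a red herring and should be dropped; the object really is $\mathbb{P}^1(\R)$, and no refinement is needed) and reads the double cosets off as $\T$-orbits on the projective line. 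The orbit criterion you write down, whether $c_1^2\gamma_2 - c_2^2\gamma_1$ (equivalently $c_2^2 - \gamma_2\gamma_1^{-1}c_1^2$, up to the unit $-\gamma_1$) is a unit square, is precisely the same quadratic form as $N$, read off the line coordinates rather than the matrix row, with a transpose/inversion hidden in passing between $B^{op}\backslash\K$ and the natural $\K$-action on lines. So the two routes compute the identical arithmetic, and your geometric framing makes the two-orbit heuristic (the norm form of the anisotropic torus takes exactly two square classes on primitive vectors) and the $\eta$-transfer conceptually transparent. What it does not yet give for free is the transitivity of $\T$ on each level set of the square-class invariant: anisotropy tells you the invariant is well-defined and takes two values, but the fact that each value corresponds to a single orbit still requires solving for a torus element explicitly — which is exactly the content of the constructive matrix factorizations the paper carries out, and which you flag as the ``main obstacle.'' Your plan is sound; executing it would reproduce the same case-by-case solve that the $N(g)$ formulation organizes somewhat more economically.
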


Note that if $-1=t^2$, then one can take $(x,y)=(0,t)$; otherwise, we solve $x^2 + y^2 = -1 = \ep$.

\begin{proof}
Write $\T = \T_{\gam_1,\gam_2}$ and for each $g=(g_{ij})\in \K$ set $N(g) = g_{11}^2-\gam_2\gam_1^{-1}g_{12}^2$.  We assume $\T \subseteq \K$; the remaining case $\T_{1,\ep}^\eta \subset \K^\eta$ is similar.   Given $g \in B^{op}\T$, write $g=bt$ with $b = \smat{u&0\\v&u^{-1}}$ and $t = t(a,b)$; then $N(g)=u^2$.  Conversely, given $g \in \K$ such that $N(g) = u^2$ for some $u\in \R^\times$, we see directly that $t=t(g_{11}u^{-1}, g_{12}\gam_1^{-1}u^{-1}) \in \T$ and that  $g t^{-1} \in B^{op}$.  Thus the identity double coset $B^{op}\T$ consists of all $g\in \K$ with $N(g) \in (\R^\times)^2$.

In the ramified case, $\val(\gam_2\gam_1^{-1}) = 1$ so $N(g) \in (\R^\times)^2$ if and only if $g_{11} \in \R^\times$.  Thus if $g$ lies in the complement $\K \setminus B^{op}\T$, then $g_{11}\in \PP$ and $g_{12} \in \R^\times$.  Since $p\neq 2$ we can choose $u\in \R^\times$ such that  $N(g) = -\gam_2\gam_1^{-1}u^2$.   With this choice of $u$,  $t = t(g_{12}u^{-1}, g_{11}\gam_2^{-1}u^{-1}) \in \T$ and $gt^{-1}\w^{-1} \in B^{op}$.  Thus  $\K \setminus B^{op}\T = B^{op}\w \T$.

In the unramified case, $\gam_2\gam_1^{-1} = \ep$. 
Let $g\in \K \setminus B^{op}\T$; then necessarily $N(g)=u^2\ep$ for some $u\in \R^\times$.   To show that $g\in  B^{op}\ecrep\T$, we solve  $g=b\ecrep t$ for the unknown $t=t(c,d)$ by setting $b_{11}=u$ and $b_{12} = 0$.  The first row of this matrix equality gives a linear system with solution
$$
\mat{c\\d} = \frac{1}{u} \mat{x&-\ep y\\-y&x}\mat{g_{11}\\g_{12}}.
$$
Since this yields $\det(t(c,d))=1$, $t\in \T$ and we conclude that $g\in  B^{op}\ecrep\T$. 
\end{proof}


\begin{proposition}
A set of representatives for the double coset space $\K \backslash \G / \T\G_{y,s}$ is
\begin{equation} \label{E:MT}
\Mu(\T) = \{ I, \alp^t\lambda \mid t > 0, \lambda \in \Lambda(\T) \}.
\end{equation}
\end{proposition}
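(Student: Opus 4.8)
The plan is to decompose $\G$ into double cosets $\K\backslash\G/\T\G_{y,s}$ in two stages, using the Cartan-type decomposition already recorded in the depth-zero case as an intermediate step. First I would observe that since $\T\G_{y,s}\supseteq\G_{y,s}\supseteq\G_{y,r}$ and $\G_{y,s}$ is a congruence-type subgroup of one of the two maximal compacts, the double coset space $\K\backslash\G/\T\G_{y,s}$ is a quotient of $\K\backslash\G/\G_y$ (when $y\in\{0,1\}$; for $y=\tfrac12$ one uses that $\G_{y,s}$ is still contained in both $\K$ and $\K^\eta$ for small enough radius, or works directly with the Iwahori). Concretely, I would write $\G=\bigsqcup_{t\ge 0}\K\alp^t\K$ (resp.\ $\K\alp^t\K^\eta$) from \eqref{E:doublecosetrepsK}, and then for each fixed $t$ refine $\K\alp^t\K$ (or the relevant maximal compact coset) modulo $\T\G_{y,s}$ on the right. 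Since $\T\subseteq\K$ (or $\T_{1,\ep}^\eta\subseteq\K^\eta$), the coset $\K\alp^t\T\G_{y,s}$ is governed by $\K\backslash\K\alp^t\K / \T\G_{y,s}$, i.e.\ by how $\T\G_{y,s}$ sits inside the single $\K$-side coset, and this is where Lemma~\ref{L:doublecosets} enters.

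The key computation is that for $t>0$, conjugating $\G_{y,s}$ by $\alp^{-t}$ enlarges it enough that $\K\cap(\T\G_{y,s})^{\alp^t}$ already contains a large congruence subgroup, forcing $\G_{y,s}$ to be "absorbed" and leaving only the $\T$-coset to account for; by Lemma~\ref{L:doublecosets}, $\K=B^{op}\T\sqcup B^{op}\dcrep\,\T$, and $B^{op}$ (being lower-triangular) gets swallowed into $\K^{\alp^t}$ after conjugation, so $\K\alp^t\T\G_{y,s}$ splits into exactly the two cosets represented by $\alp^t$ and $\alp^t\dcrep$. For $t=0$ one has $\K\alp^0\T\G_{y,s}=\K\T\G_{y,s}=\K$, contributing the single representative $I$ (here $\T\subseteq\K$ makes the two "$\Lambda(\T)$-cosets" collapse since $\dcrep\in\K$, so one should check $I$ and $\dcrep$ give the \emph{same} double coset when $t=0$, which they do). Assembling: the representatives are $I$ together with $\alp^t\lambda$ for $t>0$ and $\lambda\in\Lambda(\T)$, which is exactly $\Mu(\T)$ in \eqref{E:MT}. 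For the tori with $y=\tfrac12$ the argument is the same with $\G_{1/2,s}$ in place of $\G_{y,s}$; for the torus $\T_{1,\ep}^\eta$ sitting in $\K^\eta$ one uses the second half of Lemma~\ref{L:doublecosets} and the decomposition $\K\backslash\G/\K^\eta$ from \eqref{E:doublecosetrepsK}.

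The main obstacle I anticipate is the bookkeeping of \emph{disjointness and non-redundancy}: showing that $\alp^t\lambda$ and $\alp^{t'}\lambda'$ lie in the same double coset only when $(t,\lambda)=(t',\lambda')$ (for $t,t'>0$), and that none of them coincides with the $I$ coset. Distinguishing different $t$ is immediate from the $\K\backslash\G/\K$ (or $\K\backslash\G/\K^\eta$) decomposition, since $\T\G_{y,s}\subseteq\K$ or $\subseteq\K^\eta$ respectively. The subtler point is separating $\alp^t I$ from $\alp^t\dcrep$ within a fixed $t$: this requires checking that $\dcrep\notin\K\cap(\T\G_{y,s})^{\alp^{-t}}\cdot(\text{something})$, i.e.\ a direct matrix verification using the explicit form of $\ecrep$ or $\w$ from Lemma~\ref{L:doublecosets} together with the explicit shape \eqref{E:Gyr} of $\G_{y,r}$ — one checks that $\dcrep^{\alp^{-t}}$ has an off-diagonal entry of valuation too small (by an amount controlled by $t$ versus $s$) to lie in $\K\cdot\T\G_{y,s}$. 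I would handle the unramified and ramified cases separately here, since the valuation of $\gam_2\gam_1^{-1}$ differs and hence $s\in\Z$ versus $s\in\tfrac12+\Z$ shifts the relevant inequalities by one; everything reduces to a finite case check once the structure above is in place.
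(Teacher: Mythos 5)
Your proposal is essentially the paper's proof: Cartan decomposition of $\G$ into $\bigsqcup_{t\geq 0}\K\alp^t\K$ (or $\K\alp^t\K^\eta$), reduction of the representative $\beta$ in $\alp^t\beta$ modulo $\K\cap\K^{\alp^{-t}}$ on the left and $\T\G_{y,s}$ on the right, then Lemma~\ref{L:doublecosets} applied to the resulting quotient of $B^{op}\backslash\K/\T$. Two minor corrections worth noting: for $t>0$ the group acting on $\beta$ from the left is $\K\cap\K^{\alp^{-t}}$, not $\K^{\alp^t}$ --- it is the former that contains $B^{op}$, since $\alp^t(\cdot)\alp^{-t}$ scales the $(2,1)$-entry by $\p^{2t}$, whereas $\K\cap\K^{\alp^{t}}$ contains $B$ instead --- and for the distinctness of the $\alp^t$ and $\alp^t\dcrep$ cosets the paper bypasses the case-by-case matrix verification you anticipate by simply reducing modulo $\PP$: since $s,t>0$ one has $(\K\cap\K^{\alp^{-t}})\T\G_{y,s}\equiv B^{op}\T\pmod{\PP}$, so $\dcrep\notin(\K\cap\K^{\alp^{-t}})\T\G_{y,s}$ by the disjointness built into Lemma~\ref{L:doublecosets}.
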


\begin{proof}
Assume $\T \subset \K$; the case $\T=\T_{1,\ep}^\eta \subset \K^\eta$ is similar.  Write $\Lambda(\T) = \{I,\dcrep\}$.

Since $\K\backslash \G/\K$ is represented by $\{\alp^t\mid t\geq 0\}$, each double coset of $\K \backslash \G / \T\G_{y,s}$ can be represented by an element of the form $\alp^t\beta$, with $t \geq 0$ 
and $\beta$ a representative of $(\K \cap \K^{\alp^{-t}})\backslash \K / \T\G_{y,s}$.
When $t=0$, $(\K \cap \K^{\alp^{-t}})=\K$ and so we take $\beta = 1$.  
For any $t > 0$, the group $\K \cap \K^{\alp^{-t}}$ contains the group $B^{op}$; also $\T\G_{y,s}$ contains $\T$.  So each double coset is a union of $B^{op}\backslash \K / \T$ double cosets, and Lemma~\ref{L:doublecosets} applies.  Since $s,t>0$, we can verify that  $B^{op}\T \equiv (\K \cap \K^{\alp^{-t}})\T\G_{y,s}$ modulo $\PP$, so $\dcrep \notin (\K \cap \K^{\alp^{-t}})\T\G_{y,s}$.  Hence $(\K \cap \K^{\alp^{-t}})\backslash \K / \T\G_{y,s}$ is also represented by $\{I, \dcrep\}$, as 
required.
\end{proof}

We conclude that with $\Mu(\T)$ as in \eqref{E:MT} we have
\begin{equation} \label{E:pddecomp}
\Res_{\K}\cind_{\T\G_{y,s}}^{\G} \rho \cong  \bigoplus_{\mu \in \Mu(\T)} \Ind_{\K \cap (\T\G_{y,s})^{\mu}}^\K \rho^{\mu}. 
\end{equation}
We work towards a more explicit description of these inducing subgroups.
Let $\T$ be a torus from Table~\ref{Table:tori} and $\mu = \alp^t\lambda \in \Mu(\T)$.  Set
 $$
\delta(\mu) = \begin{cases} 
2t-y & \textrm{if $y=\frac12$, $\lambda = \w$}\\
2t+y & \textrm{otherwise.}
\end{cases}
$$

\begin{lemma}\label{L:Tdeltamu}
Given $\T$ as in Table~\ref{Table:tori}, $\Aphi = \LieT\setminus \{0\}$ and $\mu \in \Mu(\T)$, we have
$$
T(\Aphi^\mu) = \K \cap \T^\mu = Z\T_{\delta(\mu)}^\mu 
$$
where $T(X)$ denotes the centralizer  of $X$ in $\K$, as in Theorem~\ref{T:Shalika}.  Furthermore, for any $m\in \Z_{> 0}$ we have $\K_m \cap \T^\mu = (\T_{\delta(\mu)+m})^\mu$.
\end{lemma}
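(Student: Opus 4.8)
The plan is to work inside $\GL_2(\extk)$ (or just with explicit matrices) and reduce everything to the split torus case via the element $g$ of \eqref{E:g}, then track the effect of conjugating by $\mu$. First I would recall that $\T^\mu$ is again an anisotropic torus fixing the point $\mu^{-1}\cdot y$ of the building, and that the Moy-Prasad filtration of $\T^\mu$ is the transport by $\mu$ of that of $\T$; since $\mu$ does not in general lie in $\K$, the intersection $\K\cap \T^\mu$ is a priori strictly smaller than $\T^\mu$. The point of the lemma is to identify exactly how much is lost, and the answer $\delta(\mu)$ is precisely the ``distance'' from $y=0$ (the vertex fixed by $\K$) to the point fixed by $\T^\mu$, measured by the relevant root valuation. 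So the first key step is to locate, for each $\mu = \alp^t\lambda\in\Mu(\T)$, the point $\mu^{-1}\cdot y'$ in $\A$ (where $y'$ is the point from Table~\ref{Table:tori}), and to observe that $\alp^t$ translates along the apartment while $\lambda\in\{I,\ecrep,\w\}$ either fixes $y'$ (when $\lambda\in\K$, i.e.\ $\lambda=I$ or $\lambda=\ecrep$) or reflects it (when $\lambda=\w$, which swaps the two ends of the chamber); this yields the case distinction in $\delta(\mu)$.

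Next I would compute $\K\cap\T^\mu$ directly in matrix form. Writing $\Aphi = X_\T$ (up to scalar), one has $X_\T^\mu = \mu^{-1}X_\T\mu$, an antidiagonal-type matrix $X(u,v)$ for suitable $u,v\in\ratk$ with $\val(u)-\val(v) = -\delta(\mu) - (\text{correction})$; in fact a short computation shows $\val(v)-\val(u) = \delta(\mu)$ when things are normalized as in Section~\ref{SS:repR}. Then by definition $T(X_\T^\mu)$ is the centralizer of $X_\T^\mu$ in $\K$, which by the formula for $T(X)$ in Theorem~\ref{T:Shalika} is exactly $\T_{1,u^{-1}v}\subseteq\K$; comparing with the description of $\T^\mu$ this is precisely $Z\T^\mu_{\delta(\mu)}$, since $\T^\mu_{\delta(\mu)}$ is by the Moy-Prasad formula
$$
\{t(a,b)\in\T^\mu \mid a\in\U_{\lceil\delta(\mu)\rceil},\ b\gam_1\in\PP^{\lceil\delta(\mu)-y\rceil}\},
$$
and one checks $\lceil\delta(\mu)\rceil$ and the off-diagonal condition are exactly what membership in $\K$ forces (the factor $Z=\{\pm I\}$ appears because $\T^\mu/\T^\mu_{\delta(\mu)}$ has a $2$-element image meeting $\K$, namely $\pm I$). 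The final clause, $\K_m\cap\T^\mu = (\T_{\delta(\mu)+m})^\mu$ for $m>0$, then follows by the same bookkeeping: conjugating the congruence condition $a_{21}\in\PP^m$ (which cuts out $\K_m$, together with $a_{11},a_{22}\in\U_m$) by $\mu^{-1}$ shifts the relevant valuation by $\delta(\mu)$, matching the definition of $\T_{\delta(\mu)+m}$ as the depth-$(\delta(\mu)+m)$ piece of the filtration on $\T$ transported to $\T^\mu$.

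I expect the main obstacle to be the careful, uniform verification of the valuation shift $\delta(\mu)$ across all the cases of Table~\ref{Table:tori} — in particular keeping the ramified tori (where $y=\tfrac12$, so ceilings $\lceil\delta(\mu)\pm y\rceil$ behave subtly) consistent with the unramified ones, and correctly handling the $\lambda=\w$ subcase where $\w$ interchanges the roles of $\gam_1$ and $\gam_2$ and hence flips the sign of $y$ in the exponents. The cleanest route is probably to do the unramified and ramified families separately, in each case reducing to the single representative torus via the conjugations already recorded in Table~\ref{Table:tori} and Lemma~\ref{L:doublecosets}, and then to verify the matrix identity $\K_m\cap\T^\mu=(\T_{\delta(\mu)+m})^\mu$ once with a general $m\geq 0$ (the case $m=0$ recovering the first assertion, modulo the center). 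Everything else is routine matrix multiplication and valuation arithmetic.
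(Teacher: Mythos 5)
Your overall strategy --- conjugate back to the standard torus $\T$, compute the intersection there, and then conjugate forward again --- is exactly the paper's approach, and the identification $T(\Aphi^\mu)=\K\cap\T^\mu$ via the centralizer is the same first step.  However, several of the intermediate assertions you make along the way are incorrect as stated, and they need to be repaired before the proof goes through.

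First, your criterion for the case split in $\delta(\mu)$ --- that $\lambda$ fixes $y'$ ``when $\lambda\in\K$'' --- is wrong on both sides: $\ecrep^\eta$ (the relevant $\dcrep$ when $\T=\T_{\p^{-1},\ep\p}$, $y'=1$) does not lie in $\K$, and $\w$ \emph{does} lie in $\K$ but does not fix $y'=\frac12$ since $\w$ acts on $\A$ as the reflection $y\mapsto -y$.  The correct criterion is whether $\lambda$ stabilizes the point $y'$ itself, i.e.\ $\lambda\in\G_{y'}$; this holds for $\lambda\in\{I,\ecrep,\ecrep^\eta\}$ and fails for $\lambda=\w$, which is what produces the sign change $2t+y$ versus $2t-y$.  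Second, your valuation count is off by a factor of two: for $\Aphi^\mu=X(u,v)$ one has $\val(v)-\val(u)=2\delta(\mu)$, not $\delta(\mu)$ --- see the paper's own remark at the start of Section~6.2, and check the unramified case $\Aphi^{\alp^t}=X(\p^{-2t},\ep\p^{2t})$ with $\delta(\alp^t)=2t$.  Third, the Moy--Prasad formula you write down, $\{t(a,b)\in\T^\mu\mid a\in\U_{\lceil\delta(\mu)\rceil},\ b\gam_1\in\PP^{\lceil\delta(\mu)-y\rceil}\}$, describes $\T_{\delta(\mu)}$ \emph{inside $\T$}, not inside $\T^\mu$; the filtration of $\T^\mu$ is $(\T_{\delta(\mu)})^\mu$, which has conjugated matrix entries, and conflating the two is exactly where the factor-of-two error hides.  (There is also a uniform sign convention slip: the paper's $\T^\mu$ means $\mu\T\mu^{-1}$, so $\T^\mu$ fixes $\mu\cdot y'$, not $\mu^{-1}\cdot y'$.)

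None of these is a fatal conceptual gap --- once corrected, your explicit-matrix route via \eqref{E:KT} and Table~\ref{Table:xt} does deliver the result, and it has the small advantage of sidestepping the issue the paper flags, namely that $\ecrep$ and $\ecrep^\eta$ are not apartment automorphisms.  But you should be aware that the paper's own proof is shorter precisely because it avoids the matrix computation: it treats $\lambda\in\{1,\w\}$ by the observation $\K_m^{\mu^{-1}}=\G_{\mu^{-1}\cdot 0,\,m}$ (a translated Moy--Prasad group, intersected with $\T$ by inspection of \eqref{E:Gyr}), and then absorbs $\dcrep=\ecrep$ or $\ecrep^\eta$ using only that $\dcrep\in\G_{y'}$ normalizes $\T$ and its filtration, so $\K_m^{\alp^{-t}}\cap\T^\dcrep=\K_m^{\alp^{-t}}\cap\T$.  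If you want the cleaner argument, that structural observation is the key shortcut; if you prefer the explicit route, fix the three points above before presenting it.
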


\begin{proof}
That $\K \cap \T^\mu = T(\Aphi^\mu)$ follows directly, since $\T$ is the centralizer in $\G$ of $\Aphi$.  
For $m\geq 0$ we have $\K_m^{\mu^{-1}} = \G_{0,m}^{\mu^{-1}} = \G_{\mu^{-1}\cdot 0,m}$.   If $\mu = \alp^t$ then this group is $\G_{-2t,m}$ whereas if $\mu = \alp^t\w$ then it is $\G_{2t,m}$.  Comparing these matrix groups reveals that the intersection
 $\K_m^{\mu^{-1}}\cap\T$ is, up to centre when $m=0$,  the filtration subgroup $\T_{\delta(\mu)+m}$.
In the remaining cases, $\mu = \alp^t\dcrep$ does not necessarily normalize the apartment $\A$, but $\dcrep\in \G_y$ normalizes $\T$ and also its Moy-Prasad filtration subgroups.  Thus $\K_m^{\alp^{-t}}\cap\T^\dcrep = \K_m^{\alp^{-t}}\cap\T = \T_{\delta(\mu)+m}=\T_{\delta(\mu)+m}^\dcrep$.  The result follows.
\end{proof}

For reference we identify in  Table~\ref{Table:xt} the action of each $\lambda \in \Lambda(\T)$ on a basis element $X_\T$ of $\LieT$.  Note that $\T^\mu = \{aI+bX_\T^\mu \mid a,b\in \R\}\cap \G$.  Thus if $\mu=\alp^t\lambda$ and $X_\T^\lambda = X(u,v)$ then 
\begin{equation} \label{E:KT}
\K \cap \T^{\mu} = \left\{ \left.\mat{a & b \\ bu^{-1}v\p^{4t} & a}\right| a,b\in \R\right\} \cap \G = \T_{1,u^{-1}v\p^{4t}}.
\end{equation}
\begin{table}
\begin{center}
\renewcommand{\arraystretch}{1.5}
\begin{tabular}{|l|l|l|}
\hline
Torus $\T$ & $X_\T \in \LieT$ & Conjugate $X_\T^\dcrep$ \\
\hline
$\T_{1,\ep}$ & $X(1,\ep)$ & $X_\T^{\ecrep} = X(\ep,1)$\\
$\T_{\p^{-1},\ep \p}$ & $X(\p^{-1},\ep \p)$ & $X_\T^{\ecrep^\eta}= X(\ep \p^{-1}, \p)$\\ 
$\T_{\gam_1,\gam_2}$ & $X(\gam_1,\gam_2)$ & $X_\T^{\w} = X(-\gam_2,-\gam_1)$\\
\hline
$\T$ & $X_\T^{\lambda} = X(u,v)$ & $X_\T^{\alp^t\lambda} = X(u\p^{-2t},v\p^{2t})$ \\ 
\hline
\end{tabular}
\caption{Values of $X_\T^\mu$ for various tori $\T$ and $\mu = \alp^t\lambda \in \Mu(\T)$, with $\lambda \in \Lambda(\T)=\{1,\dcrep\}$.} \label{Table:xt}
\end{center}
\end{table}

Similarly, we can describe $\K\cap \G_{y,s}^\mu$ succinctly: 
when $y\in \{0,1\}$,  $\Lambda(\T) \subset \G_y$, so for any $\mu=\alp^t\lambda\in \Mu(\T)$, we have $\G_{y,s}^{\mu} = \G_{y,s}^{\alp^t}.$  As in the proof of Lemma~\ref{L:Tdeltamu}, we can easily write down $\G_{y,s}^{\alp^t}$ and $\G_{1/2,s}^{\alp^t\w}$.  In general, setting  $M = \max\{0,\lrc{s-\delta(\mu)}\}$, we have
\begin{equation} \label{E:KG}
\K \cap \G_{y,s}^{\mu}  = \left\{ \mat{\U_{\lrc{s}} & \PP^M \\ \PP^{\lrc{s+\delta(\mu)}} & \U_{\lrc{s}}} \right\} \cap \G.
\end{equation}

\begin{proposition} \label{P:split}
Let $\mu\in \Mu(\T)$.  Then
$
\K \cap (\T\G_{y,s})^{\mu} = (\K \cap \T^{\mu})(\K \cap \G_{y,s}^{\mu}).
$  Furthermore, if $\delta(\mu) > s$, then $\K \cap (\T\G_{y,s})^{\mu} = Z(\K \cap \G_{y,s}^{\mu})$.
\end{proposition}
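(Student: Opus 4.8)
The plan is to reduce the statement to a single membership assertion and then verify that by a matrix computation. Since conjugation distributes over the internal product and $\T\G_{y,s}$ is a group (with $\G_{y,s}$ normalized by $\T\subseteq\G_y$), one has $(\T\G_{y,s})^\mu=\T^\mu\G_{y,s}^\mu$, and $\G_{y,s}^\mu$ is normal in it. The inclusion ``$\supseteq$'' is immediate, since $\K\cap\T^\mu$ and $\K\cap\G_{y,s}^\mu$ both lie in $\K\cap(\T\G_{y,s})^\mu$ and the first normalizes the second. For ``$\subseteq$'', given $g=th\in\K$ with $t\in\T^\mu$ and $h\in\G_{y,s}^\mu$, it is enough to produce $z\in\T^\mu\cap\G_{y,s}^\mu$ with $tz\in\K$, because then $g=(tz)(z^{-1}h)$ with $tz\in\K\cap\T^\mu$ and $z^{-1}h=(tz)^{-1}g\in\K\cap\G_{y,s}^\mu$. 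By Lemma~\ref{L:Tdeltamu} we have $\K\cap\T^\mu=Z\T_{\delta(\mu)}^\mu$, and by the identification $\T_r=\T\cap\G_{y,r}$ of the Moy--Prasad filtration we have $\T^\mu\cap\G_{y,s}^\mu=\T_s^\mu$; hence $(\K\cap\T^\mu)(\T^\mu\cap\G_{y,s}^\mu)=Z\T_{\min(\delta(\mu),s)}^\mu$, and such a $z$ exists precisely when $t\in Z\T_{\min(\delta(\mu),s)}^\mu$. So the first assertion amounts to showing $t\in Z\T_{\min(\delta(\mu),s)}^\mu$ for every such $g=th$; granting this, the ``furthermore'' follows at once, for when $\delta(\mu)>s$ we have $\T_{\delta(\mu)}^\mu\subseteq\T_s^\mu=\T^\mu\cap\G_{y,s}^\mu$ and $\T_{\delta(\mu)}^\mu\subseteq\K$ (again by Lemma~\ref{L:Tdeltamu}), so $\K\cap\T^\mu\subseteq Z(\K\cap\G_{y,s}^\mu)$ and therefore $(\K\cap\T^\mu)(\K\cap\G_{y,s}^\mu)=Z(\K\cap\G_{y,s}^\mu)$.

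To prove $t\in Z\T_{\min(\delta(\mu),s)}^\mu$, I would split on whether $\G_{y,s}^\mu\subseteq\K$, i.e.\ whether $\lrc{s-\delta(\mu)}\ge0$ (which holds whenever $\delta(\mu)\le s$, and in particular when $\mu=I$). If so, then $t=gh^{-1}\in\K\cap\T^\mu=Z\T_{\delta(\mu)}^\mu\subseteq Z\T_{\min(\delta(\mu),s)}^\mu$, the last inclusion because filtration subgroups are nested. Otherwise $\lrc{s-\delta(\mu)}<0$, whence $\delta(\mu)>s$, $\min(\delta(\mu),s)=s$, and we must show $t\in Z\T_s^\mu$. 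Here I would pass to coordinates: write $t=t(a,b)$ with respect to the basis $\{I,X_\T^\mu\}$, so $a,b\in\R$ because $\T\subseteq\K$, and $a^2-b^2uv=1$ where $X_\T^\mu=X(u,v)$; from Table~\ref{Table:xt} and \eqref{E:xuv}, $\val(uv)=\val(\gam_1\gam_2)\in\{0,1\}$ is independent of $\mu$, with $\val u=\tfrac12\val(uv)-\delta(\mu)$ and $\val v=\tfrac12\val(uv)+\delta(\mu)$. Expanding $h=t^{-1}g$ entrywise and imposing $h\in\G_{y,s}^\mu=\G_{\delta(\mu),s}$ --- diagonal entries in $\U_{\lrc s}$, lower-left in $\PP^{\lrc{s+\delta(\mu)}}$, upper-right in $\PP^{\lrc{s-\delta(\mu)}}$ by \eqref{E:Gyr} --- together with $g\in\SL_2(\R)$ and $\det g=1$, a short entry chase yields successively: that $a$ is a unit and $g$ has diagonal entries in $\R^\times$ and lower-left entry in $\PP$; that $bu\in\PP^{\lrc{s-\delta(\mu)}}$ (from the upper-right equation, using $\lrc{s-\delta(\mu)}<0$), which is already the ``$b$-part'' of membership in $\T_s^\mu$; and hence that $\val b\ge\lrc{s-\delta(\mu)}-\val u\ge s-\tfrac12\val(uv)$, so $\val(b^2uv)\ge 2s$ and, being an integer at least $2s$, at least $\lrc s$. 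Then $a^2=1+b^2uv$ together with $p\ne2$ forces $a\equiv\pm1\pmod{\PP^{\lrc s}}$, so one of $t(a,b)$, $t(-a,-b)$ lies in $\T_s^\mu$; that is, $t\in Z\T_s^\mu$, as required. The case $\T=\T_{1,\ep}^\eta\subset\K^\eta$ is handled identically with $\K^\eta$ in place of $\K$.

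I expect the main obstacle to be exactly this last entry chase in the case $\lrc{s-\delta(\mu)}<0$: keeping the four entry equations and the several ceilings under control, especially for the ramified tori, where $s$ and $\delta(\mu)$ are quarter- and half-integers and the ceilings do not telescope as cleanly as in the unramified case, and accounting for both orientations of $\G_{y,s}^\mu$ that are built into $\delta(\mu)$ via the two possibilities for $\lambda\in\Lambda(\T)$. A more conceptual alternative --- reducing $g\equiv t$ modulo the pro-unipotent radical of the parahoric fixing $\mu\cdot y$ and using that an anisotropic torus meets a proper parabolic of the reductive quotient only in its center --- gives $t\in Z\T_{0+}^\mu$ painlessly, but getting from there down to the sharp level $s$ seems to require a further induction and is not obviously shorter, so I would keep the direct computation as the main line of argument.
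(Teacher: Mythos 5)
Your argument is correct. It shares the paper's kernel — factor the element and inspect matrix entries — but reorganizes it: the paper factors $g=uh\in\T\G_{y,s}$, imposes $g^{\alp^t}\in\K$, inspects only the $(1,2)$-entry of $(uh)^{\alp^t}$, and concludes directly that $u\in\T^\lambda\cap Z\G_{y,s}$, which yields both clauses of the proposition at once. You instead work on the $\K$ side, invoke Lemma~\ref{L:Tdeltamu} (not used in the paper's proof of this proposition) to reduce cleanly to the single statement $t\in Z\T_{\min(\delta(\mu),s)}^\mu$, from which the ``furthermore'' is automatic, and then do a fuller entry chase. Your worry about the ramified ceilings is unfounded: the chase closes uniformly. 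From $h_{22}=ag_{22}-bvg_{12}$ and $\val(bv)\geq 1$ (which holds since $\delta(\mu)>s>0$) one gets $a,g_{22}\in\R^\times$ at once; then $h_{12}=ag_{12}-bug_{22}$, together with $\lrc{s-\delta(\mu)}<0$ and $g_{22}\in\R^\times$, gives $bu\in\PP^{\lrc{s-\delta(\mu)}}$; the $(2,1)$-bound $bv\in\PP^{\lrc{s+\delta(\mu)}}$ then comes for free because $\lrc{s-\delta(\mu)}+2\delta(\mu)=\lrc{s+\delta(\mu)}$ in every case ($2\delta(\mu)\in\Z$ always, ramified or not); and $\lceil 2s\rceil\geq\lceil s\rceil$ settles the diagonal-entry estimate. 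Your version is marginally more complete than the paper's in that it makes explicit the unit-ness of $a$ and of the relevant diagonal entry of $g$ — a step that the phrase ``It follows that $u\in\T^\lambda\cap Z\G_{y,s}$'' in the paper's proof uses tacitly.
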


\begin{proof}
Note that since $\T$ normalizes $\G_{y,s}$, $\K \cap \T^\mu$ normalizes $\K \cap \G_{y,s}^\mu$.  Let $\mu =\alp^t\lambda \in \Mu(\T)$.  If $t=0$ then since $s>0$ we see that for all $y\in \{0,\frac12,1\}$, $\G_{y,s}\subseteq \K$, and there is nothing to show.  If $t>0$ and $\lambda \neq \w$, then $\G_{y,s}=\G_{y,s}^\lambda$, so it suffices to prove that 
$\K \cap (\T^\lambda\G_{y,s})^{\alp^t} = 
(\K \cap (\T^\lambda)^{\alp^t})(\K \cap (\G_{y,s})^{\alp^t})$.  We use the explicit matrix forms, above.

Factor $g\in \T^\lambda\G_{y,s}$ as $g=uh$ with $u = (u_{ij}) \in \T^\lambda$ and $h = (h_{ij}) \in \G_{y,s}$.  If $\lrc{s-y}\geq 2t$ then $h_{12}\in \PP^{2t}$ and consequently $h^{\alp^t} \in \K$; thus $g^{\alp^t} \in \K$ if and only if $u^{\alp^t}\in \K$, and our factorization holds.  
Otherwise, note that if $g^{\alp^t}=(uh)^{\alp^t} \in \K$ then its $(1,2)$ matrix entry satisfies 
$$
  u_{11}h_{12}\p^{-2t} + u_{12}h_{22}\p^{-2t} \in \R.
$$
As $u_{11},h_{22}\in \R^\times$, we deduce $\val(u_{12})= \val(h_{12})$, which by definition of $h$ is at least $\lrc{s-y}$.  It follows that
 $u \in \T^\lambda\cap Z\G_{y,s}$.  
We can thus refactor $g$ as $ih'$ with $i\in Z\subset \T^\lambda$ and $h'\in \G_{y,s}$.  Since $i=i^{\alp^t} \in \K$ we deduce that $(h')^{\alp^t}\in \K$, as required.

The case $\lambda = \w$ follows by replacing $G_{y,s}$ with $G_{y,s}^{\w}$ and $\lrc{s-y}$ with $\lrc{s+y}$.
\end{proof}

Note that this proposition does not hold if we replace the factorization $\T\G_{y,s}$ with the factorization $\T J^1$ referred to in the proof of Lemma~\ref{L:Yu}.

\subsection{Positive depth case: depths and degrees of the Mackey components} \label{SS:depth}

\begin{proposition} \label{P:depth}
Let $\rho = \rho(\T,y,r,\phi)$ and $\mu\in \Mu(\T)$.  Then the maximum depth $d$ of any irreducible $\K$-component of the representation $\Ind_{\K \cap (\T\G_{y,s})^{\mu}}^\K \rho^{\mu}$
is $d=r + \delta(\mu)$.
\end{proposition}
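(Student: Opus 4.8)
The plan is to compute the maximum depth as the smallest integer $d$ such that $\K_{d+1}$ acts trivially on the induced representation, equivalently such that $\K_{d+1}\cap(\T\G_{y,s})^\mu$ lies in the kernel of $\rho^\mu$. This reduces the problem to analyzing how $\rho^\mu$ behaves on $\K_{m}\cap(\T\G_{y,s})^\mu$ for $m = d+1$, using the structure of $\rho$ recalled in the construction. By Proposition~\ref{P:split}, $\K\cap(\T\G_{y,s})^\mu = (\K\cap\T^\mu)(\K\cap\G_{y,s}^\mu)$, and since $\T$ normalizes $\G_{y,s}$, the intersection $\K_m\cap(\T\G_{y,s})^\mu$ decomposes compatibly; by Lemma~\ref{L:Tdeltamu} we have $\K_m\cap\T^\mu = (\T_{\delta(\mu)+m})^\mu$, and from \eqref{E:KG} the group $\K_m\cap\G_{y,s}^\mu$ is the congruence-type subgroup with corner exponents governed by $s+\delta(\mu)$. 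So I expect to treat the torus part and the unipotent/filtration part separately.

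For the torus part, $\rho$ restricted to $\T$ is $\phi$-isotypic up to the Heisenberg-type factor (properties (1) and (2) of Lemma~\ref{L:Yu}, and in the degree-one case $\rho=\hat\phi$ directly extends $\phi$); since $\phi$ has depth $r$, $\rho^\mu$ is trivial on $(\T_{\delta(\mu)+m})^\mu$ exactly when $\T_{\delta(\mu)+m}\subseteq\T_{r+}$, i.e. when $\delta(\mu)+m > r$, i.e. $m \geq r+\delta(\mu)+1$ (recalling $\delta(\mu)$ and $r$ have integer sum since $r\in\Z$ for unramified $\T$ and $r\in\frac12+\Z$, $y=\frac12$ for ramified $\T$). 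For the $\G_{y,s}$-part, $\rho$ restricted to $\G_{y,s+}$ is $\Psi_\Aphi$-isotypic, a character built from $\Aphi = \aphi X_\T\in\LieT_{-r}$ with $\val(\aphi\gam_1) = -r-y$; conjugating by $\mu$ and pairing against $\K_m\cap\G_{y,s}^\mu$ via the trace form, one checks that $\Psi_\Aphi^\mu$ is trivial on $\K_m\cap\G_{y,s}^\mu$ precisely when $m-1\geq r+\delta(\mu)$, using that conjugation by $\alp^t$ scales the relevant entry by $\p^{\pm 2t}$ and by $\w$ or $\dcrep$ does not change the valuation of the pairing (Table~\ref{Table:xt}). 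Both parts thus give the same threshold $d = r+\delta(\mu)$.

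The two directions then need to be assembled: triviality on $\K_{d+1}\cap(\T\G_{y,s})^\mu$ follows by multiplying the two triviality statements (using the factorization of Proposition~\ref{P:split}, which is legitimate here since $\rho^\mu$ is a genuine representation and both factors are in its kernel), while nontriviality on $\K_{d}\cap(\T\G_{y,s})^\mu$ follows by exhibiting a single element — for instance, coming from the filtration subgroup $\G_{y,s+}$ side when $d+\delta(\mu)$... more precisely, exhibiting $t\in(\T_{\delta(\mu)+d})^\mu$ with $\phi(e^{-1}(t))\neq 1$, which exists because $\phi$ has depth exactly $r = \delta(\mu)+d-\delta(\mu)$... — here one uses that $\phi$ restricted to $\T_r/\T_{r+}$ is a nontrivial (indeed $\G$-generic) character, so it does not vanish on the rank-one quotient. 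A small subtlety is the interaction with the centre $Z$ appearing in Lemma~\ref{L:Tdeltamu} and in the $\delta(\mu)>s$ case of Proposition~\ref{P:split}: since $Z$ acts by a scalar (the central character) one must confirm it does not affect the depth computation, which it does not because $Z\subseteq\K_0$ and depth only sees $\K_{n}$ for $n\geq 1$.

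The main obstacle I anticipate is the bookkeeping in the $\G_{y,s+}$-isotypic computation: verifying that the character $\Psi_\Aphi^\mu$ genuinely has the claimed conductor on $\K_m\cap\G_{y,s}^\mu$ requires carefully tracking the three cases $\mu=\alp^t$, $\mu=\alp^t\w$, $\mu=\alp^t\dcrep$ through Table~\ref{Table:xt} and \eqref{E:KG}, and confirming that in each case the extremal corner entry of $\K_d\cap\G_{y,s}^\mu$ pairs nontrivially against $\Aphi^\mu$ under the trace form while one step deeper it pairs trivially — in other words checking that the ``$\max\{0,\lrc{s-\delta(\mu)}\}$'' truncation in \eqref{E:KG} does not interfere, i.e. that the nontrivial pairing is detected in the lower-left corner of exponent $\lrc{s+\delta(\mu)}$ rather than being killed there. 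Once the valuations are lined up this is routine, but it is the step where an off-by-one in the ceiling functions $\lrc{\cdot}$ or a sign from the $\w$-conjugation could most easily creep in, so I would do it explicitly case by case.
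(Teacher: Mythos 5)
Your overall framework --- find the least $m$ with $\K_m \cap (\T\G_{y,s})^\mu \subseteq \ker\rho^\mu$ --- matches the paper's, but there is a sign error in the torus bookkeeping that corrupts your lower bound. Lemma~\ref{L:Tdeltamu} gives $\K_m\cap\T^\mu = (\T_{\delta(\mu)+m})^\mu$, so $\T_{\delta(\mu)+m}\subseteq\T_{r+}$ happens when $\delta(\mu)+m>r$, that is $m>r-\delta(\mu)$ --- \emph{not} $m>r+\delta(\mu)$ as you wrote. Consequently the torus contributes to $\K_m$ only for $m\leq r-\delta(\mu)$, strictly shallower than the target depth $r+\delta(\mu)$ whenever $\delta(\mu)>0$, and your two thresholds do not in fact coincide. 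Your proposed witness $t\in(\T_{\delta(\mu)+d})^\mu$ with $d=r+\delta(\mu)$ lies in $(\T_{r+2\delta(\mu)})^\mu\subseteq(\T_{r+})^\mu$ whenever $\delta(\mu)>0$; since $\phi$ has depth $r$ it is trivial there, so no such $t$ with $\phi(e^{-1}(t))\neq 1$ exists except in the single case $\mu=I$, $y=0$.

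The nontrivial element at depth $r+\delta(\mu)$ must instead come from a root subgroup inside $\G_{y,s+}$: conjugation by $\alp^t$ pushes the lower-left root subgroup $2t$ steps \emph{deeper} into the $\K$-filtration, while it pushes the torus the same amount \emph{shallower} (this is exactly the asymmetry Lemma~\ref{L:Tdeltamu} records). The paper's witness is $g_c = \smat{1&0\\ c\p^{r+y}&1}\in\G_{y,r}$ (replaced by its transpose when $\lambda=\w$), on which $\rho$ acts by the nontrivial scalar $\Psi(\aphi\gam_1 c\p^{r+y})$, and one checks $g_c^{\alp^t}\in\K_{r+\delta(\mu)}$. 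You would also want the paper's cleaner upper-bound argument, which avoids decomposing $\K_n\cap(\T\G_{y,s})^\mu$ (Proposition~\ref{P:split} is stated at level $0$, not at level $n$): from \eqref{E:KG}, $\K_n\subseteq\K\cap\G_{y,r+}^\mu\subseteq\ker(\rho^\mu)$ whenever $n>r+\delta(\mu)$.
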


\begin{proof}
By Lemma~\ref{L:Yu}, we have that $\ker(\rho) \supseteq \G_{y,r+}$, and that $\Res_{\ZZ T_{0+}}\rho$ is $\phi$-isotypic, where $\phi$ has depth $r$.  (In fact, one can explicitly describe $\ker(\rho)$ as the subgroup generated by $\T_{r+}$ and $J_+^1$ 
but this is more than is needed here.)  It follows that $\K \cap \G_{y,r+}^\mu \subseteq \ker(\rho^\mu)$; from \eqref{E:KG} we see it contains $\K_n$ for all $n>r+\delta(\mu)$, so the maximal depth is at most $r+\delta(\mu)$.

To show that $\K_{r+\delta(\mu)} \not\subset  \ker(\rho^\mu)$, write $\mu=\alp^t\lambda$ and let  $\Aphi = \aphi X_\T \in \LieT_{-r}$ represent $\phi$, which is $\G$-generic of depth $r$.  Then $\val(\aphi \gam_1) = -r-y$.  First suppose $\lambda=1$.  For any $c\in \R^\times$, set $g_c = \smat{1& 0 \\ c\p^{r+y} & 1}$.  Then $\rho^{\alp^t}(g_c^{\alp^t}) = \rho(g_c)=\Psi_\Aphi(g_c)I_{\deg(\rho)} = \Psi(\aphi \gam_1 c \p^{r+y}))I_{\deg(\rho)}$ is nontrivial.  
Since $g_c^{\alp^t} \in \K_{r+y+2t}$, we conclude that $\rho^{\alp^t}$ has (a component of) depth $r+\delta(\alp^t)$.  

Now suppose $\lambda \neq 1$.  When $y=0$, $\lambda=\ecrep\in \K$ normalizes $\K_r$ so $\rho^\mu((g_c^{\ecrep^{-1}})^\mu) = \rho(g_c^{\ecrep^{-1}})$ is nontrivial and $(g_c^{\ecrep^{-1}})^\mu = g_c^{\alp^t} \in \K_{r+2t}$.  When $y=1$, the element to consider is instead $g_c^{(\ecrep^\eta)^{-1}}$.  When $y=\frac12$, and $\lambda=\w$, replace $g_c$ by $g_c'=\smat{1& c\p^{r-y} \\ 0 & 1}$.  We verify that $\rho^\mu({g_c'}^{\mu})=\rho(g_c')$ is nontrivial and ${g_c'}^{\mu} \in \K_{r+2t-y}$.  In all cases we conclude that $\rho^\mu$ has (a component of) depth $r+\delta(\mu)$.
\end{proof}

Now we determine the degree of each $\K$-representation occuring in \eqref{E:pddecomp}.

\begin{proposition} \label{P:degree}
Let $\rho = \rho(\T,y,r,\phi)$ and $\mu =\alp^t\lambda \in \Mu(\T)$.
If $t=0$ and $y=0$ then $\deg\left(\Ind_{\T\G_{0,s}}^\K \rho\right) = (q-1)q^r$.  In all other cases, setting $d= r+\delta(\mu)$ we have
$$
\deg\left(\Ind_{\K \cap (\T\G_{y,s})^{\mu}}^\K \rho^{\mu}\right) = \frac{q^2-1}{2}q^{d-1}.
$$
\end{proposition}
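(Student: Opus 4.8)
The degree of an induced representation is the product of the index of the inducing subgroup with the degree of the inducing representation, so the plan is to evaluate
$$
\deg\Bigl(\Ind_{\K \cap (\T\G_{y,s})^{\mu}}^\K \rho^{\mu}\Bigr) = [\K : \K \cap (\T\G_{y,s})^{\mu}]\cdot \deg(\rho)
$$
by treating the two factors separately. For $\deg(\rho)$: by Lemma~\ref{L:Yu} and the construction preceding it, $\deg(\rho)=q$ exactly when $\T$ is unramified and $r$ is even (the only case in which $\G_{y,s}\neq\G_{y,s+}$), and $\deg(\rho)=1$ otherwise, where $\rho=\hat\phi$ is a character. Recall too that $\T$ unramified forces $y=0$ and $r\in\Z$, while $\T$ ramified forces $y=\tfrac12$ and $r\in\tfrac12+\Z$; in either case $s=r/2>0$, so $\lrc s\geq1$.

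For the index I would invoke Proposition~\ref{P:split}: from $\K \cap (\T\G_{y,s})^{\mu} = (\K \cap \T^{\mu})(\K \cap \G_{y,s}^{\mu})$ one gets
$$
[\K : \K \cap (\T\G_{y,s})^{\mu}] = \frac{[\K : \K \cap \G_{y,s}^{\mu}]}{[\K \cap \T^\mu : \K \cap \T^\mu \cap \G_{y,s}^\mu]} .
$$
The numerator I would read off \eqref{E:KG}: with $M=\max\{0,\lrc{s-\delta(\mu)}\}$, and using $1\le\lrc s\le\lrc{s+\delta(\mu)}$ and $M\le\lrc s$, a direct count of the image of $\K\cap\G_{y,s}^{\mu}$ in $\K/\K_N$ for $N\gg0$ (the determinant-one condition cutting the count by the expected factor at each level) yields
$$
[\K : \K \cap \G_{y,s}^{\mu}] = (q^2-1)\,q^{\lrc s + M + \lrc{s+\delta(\mu)} - 2}.
$$
For the denominator, note $\K\cap\T^\mu\cap\G_{y,s}^\mu = \K\cap(\T\cap\G_{y,s})^\mu = \K\cap\T_s^\mu$, and apply Lemma~\ref{L:Tdeltamu}, which gives $\K\cap\T^\mu=Z\T_{\delta(\mu)}^\mu$ and $\K_m\cap\T^\mu=\T_{\delta(\mu)+m}^\mu$ for $m>0$. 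Since $p\neq2$ we have $-I\notin\T_j$ for $j>0$, hence $Z\cap\T_j^\mu=\{I\}$ there; combining these one finds $\K\cap\T_s^\mu=\T_{\max\{s,\delta(\mu)\}}^\mu$, so that $[\K\cap\T^\mu:\K\cap\T_s^\mu]$ equals $2$ when $\delta(\mu)\geq s$ (this case also being the second assertion of Proposition~\ref{P:split}), equals $2\,[\T_{\delta(\mu)}:\T_s]$ when $0<\delta(\mu)<s$, and equals $[\T:\T_s]$ when $\delta(\mu)=0$ — the last occurring precisely when $\mu=I$ and $y=0$, since $\delta(I)=y$.

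The remaining input is the indices of the Moy--Prasad filtration quotients of $\T$, which come from the structure of the norm-one subgroup of $(\extk)^\times$: in the unramified case the jumps occur at positive integers, each quotient has order $q$, and $[\T:\T_{0+}]=q+1$; in the ramified case the jumps occur at positive half-integers, each quotient has order $q$, and $[\T:\T_{1/2}]=2$ (the initial factors $q+1$ and $2$ being $|\T_0/\T_{0+}|$). Substituting these, with $\T_s$ read as $\T_{s'}$ for $s'$ the next jump at or above $s$, and carrying out the bookkeeping with the ceilings, one obtains in the case $\mu=I$, $y=0$ that $[\K:\T\G_{0,s}]=(q-1)q^{2\lrc s-1}$, which times $\deg(\rho)$ (equal to $q$ with $\lrc s=r/2$ when $r$ is even, and to $1$ with $\lrc s=(r+1)/2$ when $r$ is odd) equals $(q-1)q^r$; and in every remaining case one finds that all dependence on $M$, on $\lrc s$ and on the parity of $r$ cancels, leaving $[\K:\K\cap(\T\G_{y,s})^{\mu}]\cdot\deg(\rho)=\tfrac{q^2-1}{2}\,q^{r+\delta(\mu)-1}=\tfrac{q^2-1}{2}\,q^{d-1}$. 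I expect the main obstacle to be purely organizational: managing the case splits ($\T$ ramified versus unramified, $r$ even versus odd, $\delta(\mu)$ at least $s$ versus below $s$) and verifying that in each the ceilings $\lrc s$, $\lrc{s\pm\delta(\mu)}$ and $M$ conspire to give exponent $r+\delta(\mu)-1$. A useful check is $t=0$, where $\mu=I$ and $d=r+y$, recovering $(q-1)q^r$ for $y=0$ and $\tfrac{q^2-1}{2}q^{r-1/2}$ for $y=\tfrac12$; and as a further consistency test $\tfrac12 q^{d-1}(q^2-1)$ is exactly the degree appearing in Theorem~\ref{T:Shalika}.
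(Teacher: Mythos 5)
Your proof follows the same route as the paper: factor the inducing subgroup via Proposition~\ref{P:split}, compute the index through the second isomorphism theorem, read $[\K:\K\cap\G_{y,s}^\mu]$ off \eqref{E:KG}, and treat the torus index separately. The paper reaches $[\K:\K\cap\G_{y,s}^\mu]$ by passing through the auxiliary group $\G_{y,s+2t+1}$ and computes the torus index $2q^M$ by an explicit matrix count of allowed pairs $(a,b)$, while you appeal to the known orders of the Moy--Prasad filtration quotients of the norm-one torus; these are equivalent. One small correction: $\T$ unramified allows $y\in\{0,1\}$ rather than forcing $y=0$ (Table~\ref{Table:tori} has $\T_{1,\ep}^\eta$ at $y=1$); since both give integer filtration jumps for $\T$ and your formulas are written in terms of $y$ and $\delta(\mu)$ generically, this is a slip rather than a gap, but the bookkeeping should be checked once with $y=1$ in hand.
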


\begin{proof}
Let $\mu =\alp^t\lambda \in \Mu(\T)$.  
The degree of the induced representation is given by $\deg(\rho)$ times the index of $\K \cap (\T\G_{y,s})^{\mu}$ in $\K$.
Using Proposition~\ref{P:split}, and that $(\K \cap \T^\mu)(\K \cap \G_{y,s}^\mu) \supset \G_{y,s+2t+1}$, we find by the second isomorphism theorem
$$
[\K:\K \cap (\T\G_{y,s})^{\mu}]=\frac{[\K:\G_{y,s+2t+1}]}{[\K \cap \G_{y,s}^\mu: \G_{y,s+2t+1}][(\K \cap \T^\mu) : (\K \cap \T^\mu \cap \G_{y,s}^\mu)]}.
$$
From our explicit descriptions in  \eqref{E:Gyr} and \eqref{E:KG} (and noting that $\G_{y,s+2t+1}\subseteq \K_1=\G_{0,1}$), we determine
\begin{align*}
[\K : \G_{y,s+2t+1}] &= (q^2-1)q^{6t+\lrc{s} + \lrc{s+y} + \lrc{s-y} +1}\\
[\K \cap \G_{y,s}^\mu : \G_{y,s+2t+1}]&=q^{6t+\lrc{s-y}+\lrc{s+y}-M-\lrc{s+\delta(\mu)}+3}.
\end{align*}
When $t=0$ and $y=0$, we have $[(\K \cap \T^\mu) : (\K \cap \T^\mu \cap \G_{y,s}^\mu)] = [\T: \T\cap \G_{0,s}] = \vert \TT(\resk) \vert [\T_1:\T_s] = (q+1)q^{\lrc{s}-1}$.  Since $\deg(\rho)=q$ exactly when $r=2\lrc{s}$, and is $1$ when $r = 2\lrc{s} -1$, we deduce the total degree is
$(q-1)q^{2\lrc{s}-1}\deg(\rho) = (q-1)q^r$.

When $t>0$ or $y\neq 0$, then $\K \cap \T^\mu$ is contained in the standard Iwahori subgroup.  With notation as in \eqref{E:KT}, we see that for every $b \in \R$, there exist exactly two choices for $a$ such that $t(a,b)\in \K\cap\T^\mu$.  Furthermore, if $b\in \PP^M$ then 
$a \in \pm \U_{2\delta(\mu)+2M} \subset \pm \U_{\lrc{s}}$, so that $t(a,b)\in Z\G_{y,s}^\mu$ if and only if $b\in \PP^M$.  Thus
$$
[(\K \cap \T^\mu) : (\K \cap \T^\mu \cap \G_{y,s}^\mu)]=2q^M.
$$
Consequently
$$
\deg\left(\Ind_{\K \cap (\T\G_{y,s})^{\mu}}^\K \rho^{\mu}\right) = \frac12 \deg(\rho) (q^2-1)q^{\lrc{s} + \lrc{s+\delta(\mu)} - 2}.
$$
When $r$ is an even integer, $\deg(\rho)=q$ and $y$ and $s$ are integers, so the expression simplifies to $\frac12(q^2-1) q^{r+y+2t-1}$, as required.   Otherwise, we have $\deg(\rho)=1$, and either $r$ is an odd integer and $y$ is an integer, or else $r$ and $y$ are half-integers.  In either case, $\lrc{s}+\lrc{s\pm y} = 2s\pm y+1$.  Thus we again recover the desired formula.
\end{proof}

 
\section{Branching rules: depth-zero case}  \label{S:depthzero}

We reprise the notation for depth-zero supercuspidal representations.  In particular $\om$ denotes a nontrivial character of $\ker(N)$, the kernel of the norm map of a quadratic extension of $\resk$, and $\sig = \sig(\om)$ denotes both the corresponding representation of $\SL_2(\resk)$, and its inflation to $\K$.  We write $\om_0$ for the unique character of order $2$, for which $\sig_0 = \sig(\om_0)$ decomposes into the two cuspidal representations $\sig_0^\pm$.  




We begin by showing that for each $d>0$ and each $\sig = \sig(\om)$ the representation
$
\Ind_{B\K_d}^\K \sig^{\eta^d}
$
is independent of the choice of $\sig$ up to its central character, by showing that this is true of the restriction to $B\K_d$ of $\sig^{\eta^d}$.

\begin{lemma} \label{L:resbkd}
Let $d>0$.  Let $\om_1,\om_2$ be two nontrivial characters of $\ker(N)$ and $\sig_i=\sig(\om_i)$, $i\in \{1,2\}$,  the corresponding representations of $\K$.   Let $\tau$ denote the trivial extension to $B\K_d$ of a character of the diagonal torus of $\K$ such that $\tau(-1)=-1$.  
Then we have
\begin{equation} \label{E:E:resbkd}
\Res_{B\K_d}\sig_1^{\eta^d} \cong \begin{cases}
\Res_{B\K_d}\sig_2^{\eta^d} & \textrm{if $\om_1(-1)=\om_2(-1)$;}\\
\tau \Res_{B\K_d}\sig_2^{\eta^d} &\textrm{otherwise.}
\end{cases}
\end{equation}
Furthermore, this restriction decomposes as a direct sum of two inequivalent irreducible subrepresentations.
\end{lemma}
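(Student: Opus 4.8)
\emph{Reduction to a statement about $\SL_2(\resk)$.} The plan is to push the whole lemma down to the finite group $\SL_2(\resk)$. By \eqref{E:match}, the restriction $\Res_{B\K_d}\sig^{\eta^d}$ is the inflation, along the surjective homomorphism
$$
q\colon B\K_d \longrightarrow \bar B,\qquad (a_{ij}) \longmapsto \mat{\bar a_{11} & 0 \\ \overline{a_{21}\p^{-d}} & \bar a_{22}},
$$
of the restriction to the lower-triangular Borel subgroup $\bar B\le\SL_2(\resk)$ of the Deligne--Lusztig representation $\sig(\om)$; here the bar is reduction mod $\PP$, and one checks $q$ is a well-defined homomorphism onto $\bar B$ using only $a_{12}a_{21}\in\PP$, $a_{21}\p^{-d}\in\R$ and $d\ge1$. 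Inflation along a surjection preserves irreducibility and equivalence and carries a twist by a character $\chi$ of $\bar B$ to a twist by $q^{*}\chi$, so it suffices to prove: (i) $\Res_{\bar B}\sig(\om)$ depends on $\om$ only through the sign $\om(-1)$, up to twisting by the order-two character of the diagonal torus of $\bar B$; and (ii) $\Res_{\bar B}\sig(\om)$ is a direct sum of two inequivalent irreducibles.

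\emph{The finite-field input.} Let $\bar U\cong(\resk,+)$ and $\bar T\cong\resk^\times$ be the unipotent radical and the diagonal torus of $\bar B$. The step I would single out as the crux is to recall, from the character theory of cuspidal representations of $\SL_2(\resk)$ (see e.g.\ \cite{DigneMichel1991} or \cite{Shalika1967}), that uniformly for every nontrivial $\om$ --- including $\om=\om_0$, for which $\sig_0$ is reducible ---
$$
\Res_{\bar U}\sig(\om)\;\cong\;\bigoplus_{\psi\in\widehat{\bar U}\setminus\{1\}}\psi,
$$
each nontrivial additive character of $\bar U$ occurring exactly once, where the additive character is normalized by the reduction of $\Psi$. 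One verifies this from: $\sig(\om)$ has no $\bar U$-fixed vectors (cuspidality); the multiplicities of the $\psi$'s are constant along $\bar T$-orbits; $\bar T$ acts on $\widehat{\bar U}\setminus\{1\}$ by $\psi_b\mapsto\psi_{a^{2}b}$, with exactly two orbits (the two square classes, each of size $(q-1)/2$); and the cuspidal part of the Gelfand--Graev representation is multiplicity-free, so a dimension count forces each multiplicity to be $1$.

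\emph{The decomposition.} I would then split along the two $\bar T$-orbits $\mathcal{O}_1,\mathcal{O}_2$: the sums $P_j$ of the ($1$-dimensional) $\psi$-eigenspaces for $\psi\in\mathcal{O}_j$ are $\bar B$-subrepresentations with $\Res_{\bar B}\sig(\om)=P_1\oplus P_2$. Fixing $\psi_j\in\mathcal{O}_j$, its $\bar B$-stabilizer is $\{\pm I\}\bar U$ (since $\diag(a,a^{-1})$ fixes $\psi_j$ iff $a^2=1$), and Clifford theory gives $P_j\cong\Ind_{\{\pm I\}\bar U}^{\bar B}(\ep_j\boxtimes\psi_j)$ for a character $\ep_j$ of $\{\pm I\}$. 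The subgroup $\{\pm I\}\bar U$ is normal in $\bar B$ and $\bar T/\{\pm I\}$ acts simply transitively on $\mathcal{O}_j$, hence nontrivially on the isomorphism class of $\ep_j\boxtimes\psi_j$; so Mackey's irreducibility criterion makes each $P_j$ irreducible of degree $(q-1)/2$, and $P_1\not\cong P_2$ because $\Res_{\bar U}P_1$ and $\Res_{\bar U}P_2$ are supported on the disjoint sets $\mathcal{O}_1$, $\mathcal{O}_2$. Inflating back along $q$ proves the last assertion of the lemma.

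\emph{Dependence on the central sign, and the main obstacle.} Since $-I\in\{\pm I\}$ and $\sig(\om)(-I)=\om(-1)\,\mathrm{Id}$, one gets $\ep_1(-I)=\ep_2(-I)=\om(-1)$, so both $P_j$ --- and hence $\Res_{\bar B}\sig(\om)$ --- are determined by the single sign $\om(-1)$; this handles the case $\om_1(-1)=\om_2(-1)$. If $\om_1(-1)=-\om_2(-1)$, I would tensor $\Res_{\bar B}\sig(\om_2)$ with a character $\chi_0$ of $\bar T$ with $\chi_0(-I)=-1$: this fixes each $\mathcal{O}_j$ (as $\chi_0$ is trivial on $\bar U$) but replaces $\ep_j$ by $\ep_j\cdot(\chi_0|_{\{\pm I\}})$, hence yields $\Res_{\bar B}\sig(\om_1)$; inflating along $q$ then gives \eqref{E:E:resbkd}, with $\tau$ the inflation of $\chi_0$ --- a character of the kind demanded in the statement (any such $\tau$ serving, since two of them differ by a $\bar T$-character trivial at $-I$, which restricts trivially to $\{\pm I\}\bar U$ and so does not affect this restriction). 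The only genuinely delicate point is the middle paragraph: pinning down the multiplicity-one structure of $\Res_{\bar U}\sig(\om)$ cleanly and uniformly in $\om$ without routing through $\GL_2(\resk)$ (the paper's stated preference), and keeping the normalizations straight --- the coroot relation $\alp^\vee=2$ is exactly what makes $\bar T$ act through squares, giving two orbits rather than one, and the additive character of $\bar U$ must be matched with $\Psi$.
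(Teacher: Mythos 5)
Your proof is correct, and it takes a genuinely different route from the paper's. Both proofs begin the same way — the identity \eqref{E:match} reduces everything to the restriction of the Deligne--Lusztig cuspidal $\sig(\om)$ to the lower-triangular Borel of $\SL_2(\resk)$. From there the paper just computes: it writes down the character of $\Res_{B^{op}}\sig_i$ from the Digne--Michel table (it is supported on $\pm$ unipotent classes), evaluates the intertwining number $I(\chi_1,\chi_2)$ as a short explicit sum, and finds it equals $2$ when $\om_1(-1)=\om_2(-1)$ and $0$ otherwise; the value $2$ simultaneously gives the decomposition into two inequivalent irreducibles and, comparing with $\tau\chi_2$ in the remaining case, finishes the lemma. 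Your argument is instead structural: you invoke the multiplicity-one restriction of cuspidals to $\bar U$ (a Gelfand--Graev/Whittaker input), split along the two $\bar T$-orbits of nondegenerate additive characters, and run Clifford theory over the normal subgroup $\{\pm I\}\bar U\trianglelefteq\bar B$ to identify the two summands as $\Ind_{\{\pm I\}\bar U}^{\bar B}(\ep_j\boxtimes\psi_j)$ with $\ep_j(-I)=\om(-1)$. This yields the same conclusion and in addition makes the ``only through $\om(-1)$'' dependence and the matching with $\sig_0^\pm$ completely transparent — effectively proving at the same time the observation the paper records immediately after the lemma. The trade-off is that your route leans on a stronger standing fact about restriction to $\bar U$ (which the paper carefully avoids, preferring the self-contained character-sum calculation), and for $\sig_0=\sig_0^+\oplus\sig_0^-$ the assertion that each half is supported on exactly one square-class orbit deserves an explicit citation rather than the dimension count alone. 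Your parenthetical that ``any such $\tau$ serves'' is correct only under the implicit (and intended) reading that $\tau$ factors through $\resk^\times$; a character of the diagonal torus of $\K$ that is nontrivial on $1+\PP$ would break the equivalence, as both sides must factor through $q$.
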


\begin{proof}
We saw in \eqref{E:match} that the restriction of $\sig_i^{\eta^d}$ to $B\K_d$ is determined by the restriction of $\sig_i$ to $B^{op}$.  It thus suffices to show that
\begin{equation} \label{E:equivBop}
\Res_{B^{op}}{\sig_1}\cong 
\begin{cases}
\Res_{B^{op}}\sig_2 & \textrm{if $\om_1(-1) = \om_2(-1)$ and}\\
\tau \Res_{B^{op}}\sig_2 & \textrm{otherwise,}
\end{cases}
\end{equation}
and that these each decompose as a direct sum of two inequivalent irreducible representations. Since these representations factor through the finite group quotient $\SL_2(\resk) \cong \K/\K_1$, it suffices to compare their characters.  Write $B^{op}$ also for its image in $\SL_2(\resk)$.  The character $\chi_i$ of $\Res_{B^{op}}\sig_i$ is given on elements of $B^{op}$ by \cite[\S 15, Table 2]{DigneMichel1991} 
$$
\chi_i \left( \mat{a & 0 \\ c & a^{-1}} \right) 
= \begin{cases}
(q-1)\om_i(a) & \text{if $a = \pm 1$, $c=0$};\\
-\om_i(a) & \text{if $a = \pm 1$, $c \neq 0$};\\
0 & \text{otherwise}.
\end{cases}
$$
The character of $\tau\Res_{B^{op}}\sig_2$ is $\tau\chi_2$.

Setting $g_c = \smat{1 & 0 \\ c & 1}$ and noting that $\chi_i$ is real-valued, we calculate the intertwining number between $\chi_1$ and $\chi_2$ to be
\begin{align*}
I(\chi_1,\chi_2)&= \frac{1}{\vert B^{op} \vert} \sum_{g \in B^{op}}\chi_1(g) \overline{\chi_2(g)} \\
&= \frac{1}{q(q-1)} \left( \sum_{a\in \{\pm 1\}, c=0} \chi_1(a)\chi_2(a) + \sum_{a\in \{\pm1\},c \neq 0} \chi_1(ag_c)\chi_2(ag_c)\right)\\
&=  \frac{1}{q(q-1)}\left(1 + \om_1(-1)\om_2(-1)\right) \left( (q-1)^2 + (q-1) \right)\\
&= \begin{cases}
2 & \text{if $\om_1(-1) = \om_2(-1)$; } \\ 
0 & \text{otherwise.}
\end{cases}
\end{align*}
It follows that for each $i$, $\Res_{B^{op}}\sig_i$ decomposes as a direct sum of two inequivalent irreducible representations of $B^{op}$, and that $\Res_{B^{op}}\sig_1$ is equivalent to $\Res_{B^{op}}\sig_2$ exactly when their central characters coincide.  On the other hand, when $\sig_1$ and $\sig_2$ have opposite central character, we may argue as above that $I(\chi_1, \tau\chi_2)=2$, which completes the proof. 
\end{proof}

When $\om = \om_0$, the unique nontrivial quadratic character of $\ker(N)$, we know that $\sig_0 = \sig_0^+\oplus \sig_0^-$.  Applying Lemma~\ref{L:resbkd} with $\om_2=\om_0$ therefore yields an explicit description of the decomposition of $\Res_{B\K_d}\sig_1^{\eta^d}$ into irreducible subrepresentations.  
Consequently, for $\theta$ a character of $Z$ and $d>0$, we define
\begin{equation} \label{E:pi0}
\pi_d^\pm(\theta) = \begin{cases}
\Ind_{B\K_d}^\K (\sig_0^\pm)^{\eta^d} & \textrm{if $\theta$ coincides with the central character of $\sig_0$;}\\
\Ind_{B\K_d}^\K \tau (\sig_0^\pm)^{\eta^d} & \textrm{otherwise.}
\end{cases}
\end{equation}
Then for any $\sigma=\sigma(\om)$ with central character $\theta$ we have
$$
\Ind_{B\K_d}^\K \sig^{\eta^d} \cong \pi_d^+(\theta) \oplus \pi_d^-(\theta)
$$
where each $\pi_d^\pm(\theta)$ has degree $\frac12(q^2-1)q^{d-1}$.  We now offer an alternative description of these representations as per Theorem~\ref{T:Shalika}, and as a consequence deduce their irreducibility.


\begin{proposition} \label{P:zeropm}
Let $\theta$ be a character of $Z$.  Write $X(a,0)$ as usual for $\smat{0&a\\0&0}$.  Then for each $d>0$, we have
$\pi_d^+(\theta) \cong \Sh_d(\theta,X(-\p^{-d},0))$ and $\pi_d^-(\theta) \cong \Sh_d(\theta,X(-\ep\p^{-d},0))$.
Consequently, each $\K$-representation $\pi_d^\pm(\theta)$ is irreducible.
\end{proposition}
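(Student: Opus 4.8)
The plan is to identify $\pi_d^\pm(\theta)$ with explicit Shalika representations $\Sh_d(\theta,X)$ by matching the inducing data, then to deduce irreducibility from Theorem~\ref{T:Shalika}. First I would record that the relevant Shalika data consist of the diagonal-nilpotent elements $X(-\p^{-d},0)$ and $X(-\ep\p^{-d},0)$, which both lie in $\g_{0,-d}$ with the required valuation conditions ($\val(u)=-d$, $v=0$ so $\val(v)>\val(u)$); moreover their centralizer is the diagonal torus $T(X(a,0))=\T_{1,0}=\{\diag(t,t^{-1})\}\cdot Z$-type group, and since $v=0$ a character $\theta$ of $T(X)$ restricting to $\Psi_X$ on $T(X)\cap\Gmess$ is exactly a character of $Z=\{\pm I\}$, so the notation $\Sh_d(\theta,X(a,0))$ makes sense for any character $\theta$ of $Z$. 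Thus the right-hand sides of the claimed isomorphisms are well-defined irreducible $\K$-representations of depth $d$ and degree $\frac12 q^{d-1}(q^2-1)$, matching the degree of $\pi_d^\pm(\theta)$ already computed.

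The core step is to show that $\Sh_d(\theta,X(a,0))=\Ind_{T(X)\Gmess}^\K\Psi_{\theta,X}$ can be re-expressed as $\Ind_{B\K_d}^\K(\text{something})$. The key observation is that $\Gmess = \G_{0,d/2}\cap\G_{1/2,d/2}$ sits inside $B\K_d$ (its lower-left entry lies in $\PP^{\lrc{(d+1)/2}}\subseteq\PP^1$ while $B\K_d$ only requires $\PP^d$ — wait, this needs care: rather the correct route is to induct in stages, $\Ind_{T(X)\Gmess}^\K = \Ind_{B\K_d}^\K\Ind_{T(X)\Gmess}^{B\K_d}\Psi_{\theta,X}$, provided $T(X)\Gmess\subseteq B\K_d$). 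Since $X(a,0)$ is upper-triangular nilpotent, $\Psi_X$ is trivial on the upper-triangular unipotent part and on all of $B$, and one checks $T(X)\Gmess\subseteq B\K_d$ directly from the matrix descriptions. So it remains to compute $\Ind_{T(X)\Gmess}^{B\K_d}\Psi_{\theta,X}$ and show it agrees with $(\sig_0^\pm)^{\eta^d}$ (or its $\tau$-twist). For this I would use the character computation of Lemma~\ref{L:resbkd}: both sides are representations of $B\K_d$ of the same degree whose restrictions to $\K_d$ and to $B$ can be compared; equivalently, since $\Res_{B\K_d}\sig^{\eta^d}$ is determined (via \eqref{E:match}) by $\Res_{B^{op}}\sig$, and the latter decomposes into two pieces distinguished by their behaviour under the diagonal torus, one matches the two pieces $\sig_0^\pm$ with the two choices of nonsquare/square $a\in\{-\p^{-d},-\ep\p^{-d}\}$ via the values of $\Psi_X$ on $U^{op}$ conjugated into $\K_d$. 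The sign bookkeeping here — tracking which of $\sig_0^+,\sig_0^-$ corresponds to which $a$ — is where the choice of additive character $\Psi$ and Shalika's labelling convention \cite[\S4.3]{Shalika1967} must be invoked; I expect this matching, rather than any hard analysis, to be the main obstacle, and it may ultimately be forced by consistency (central character and degree pin down the pair as a set, and a Weil-index / Gauss-sum computation distinguishes the two).

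Finally, irreducibility of $\pi_d^\pm(\theta)$ is immediate once the identification is established, since Theorem~\ref{T:Shalika} asserts $\Sh_d(\theta,X)$ is irreducible. Conversely — and this is the cleaner logical order — one could first observe that $\Ind_{B\K_d}^\K\sig^{\eta^d}$ has degree $(q^2-1)q^{d-1}\deg(\sig)/(q-1)\cdot(\text{correction})$, decomposes into two pieces by Lemma~\ref{L:resbkd}, each of degree $\frac12(q^2-1)q^{d-1}$, which is exactly the Shalika degree; then show each piece contains the $\Gmess$-character $\Psi_X$ with the right central character, forcing it (by Frobenius reciprocity and irreducibility of $\Sh_d(\theta,X)$, which exhausts representations of that depth and degree) to be one of the two $\Sh_d(\theta,X(a,0))$. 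I would present the argument in whichever of these two directions makes the degree-matching most transparent, likely the second.
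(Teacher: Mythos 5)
Your plan breaks at the reduction step. You propose to rewrite $\Sh_d(\theta,X)=\Ind_{T(X)\Gmess}^\K\Psi_{\theta,X}$ as $\Ind_{B\K_d}^\K\bigl(\Ind_{T(X)\Gmess}^{B\K_d}\Psi_{\theta,X}\bigr)$ and assert ``one checks $T(X)\Gmess\subseteq B\K_d$ directly from the matrix descriptions.'' That containment is false for $d\geq 3$: the lower-left entries of $\Gmess$ lie in $\PP^{\lrc{(d+1)/2}}$, whereas membership in $B\K_d$ requires $\PP^d$, and $\lrc{(d+1)/2}<d$ once $d\geq 3$. (You even flag this mid-sentence --- ``wait, this needs care'' --- but then carry on as if it held.) Since induction in stages requires the containment, the proposed rewriting does not exist, and the rest of the argument has no group to compute on. The paper instead applies Mackey theory and Frobenius reciprocity to reduce to showing that $\Ind_{B\K_d\cap T(X)\Gmess}^{B\K_d}\Psi_{\theta,X}$ intertwines with $(\sig_0^{\sgn(x)})^{\eta^d}$ (or its $\tau$-twist) as representations of $B\K_d$, where the inducing subgroup is the \emph{intersection} $B\K_d\cap T(X)\Gmess$, not $T(X)\Gmess$ itself.

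There is a second, softer gap: the crux of the proposition is the sign-matching (which of $\sig_0^\pm$ pairs with $X(-\p^{-d},0)$ versus $X(-\ep\p^{-d},0)$), and you explicitly defer it --- ``I expect this matching... to be the main obstacle... a Weil-index / Gauss-sum computation distinguishes the two.'' That is precisely what the paper must (and does) carry out: it computes the characters of both sides on $B\K_d/\K_{d+1}$ and evaluates the intertwining number via a Gauss-sum identity $\sum_{u\in\resk^\times}\xi_u\xi_{-u}=(q^2-1)/4$ with $\xi_u=\sum_{y\in S}\Psi(uy)$. Until that computation is done, the isomorphism in the statement (and hence the irreducibility claim) is not established. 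Also note a minor inaccuracy: the centralizer $T(X(a,0))$ in $\K$ is $ZU$ with $U$ the upper-triangular \emph{unipotent} subgroup, not a group of the form $\{\diag(t,t^{-1})\}\cdot Z$; your later deduction that $\theta$ amounts to a character of $Z$ is still correct because $\Psi_X$ and the trivial extension both kill $U$, but the reason you give is off.
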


\begin{proof}
Fix $x \in \{1,\ep\}$ and set $X_x = X(-x\p^{-d},0)$.  Let $\zeta$ be the nontrivial character of $\resk^\times$ of order $2$, inflated to a character of $\R^\times$.  Its kernel is $(\R^\times)^2$.   For any $s\in \R^\times$, let $\sgn(s)\in \{+,-\}$ denote the sign of $\zeta(s) \in \{\pm 1\}$.  In these terms, we need to show that
\begin{equation}\label{E:tworeps}
\pi_d^{\sgn(x)}(\theta) \cong \Sh_d(\theta,X_x).
\end{equation}
We henceforth write $X$ for $X_x$.  We first show that $\Sh_d(\theta,X)$ is well-defined.  The centralizer of $X$ in $\K$ is $T(X)=ZU$ where  $U=\GG_\alpha(\R)$ is the unipotent upper triangular subgroup.  We extend $\theta$ trivially over $U$ to a character, also denoted $\theta$, of $T(X)$.  We easily verify that the characters $\Psi_X$ and $\theta$ are both trivial on the intersection $T(X)\cap \Gmess$, so we may apply Theorem~\ref{T:Shalika} to conclude that $\Sh_d(\theta,X)$ is well-defined and an irreducible representation of $\K$.  

The representations in \eqref{E:tworeps} have the same degree so it suffices to show that they admit nonzero intertwining.   By Frobenius reciprocity and Mackey theory, it suffices to show that
$$
\Ind_{B\K_d \cap T(X)\Gmess}^{B\K_d} \Psi_{\theta,X}
$$
intertwines on $B\K_d$ with either $(\sig_0^{\sgn(x)})^{\eta^d}$ or $\tau(\sig_0^{\sgn(x)})^{\eta^d}$, according to central character.  Since these representations have (maximal) depth $d$, they factor through the finite group quotient $B\K_d/\K_{d+1}$.  Thus our approach is to evaluate the characters of these representations of finite groups and calculate their intertwining number $\I$.

First, we need some additional notation.  
Let $S$ be a set of representatives of $(\resk^\times)^2$ in $\R^\times$.  
For any $u\in \R^\times$ (or, since $\Psi$ factors to a character of the quotient $\resk$,  in $\resk^\times$) define
$$
\xi_{u} = \sum_{y\in S} \Psi(uy).
$$
Since $\Psi$ is trivial on $\PP$ $\xi_u$ takes on only one of two values, $\xi_1$ and $\xi_\ep$.   We have $\overline{\xi_u} = \xi_{-u}$ and we compute directly that
$
\sum_{u\in \resk^\times} \xi_u \xi_{-u} = (q^2-1)/4.
$

Let $\ch$ denote the central character of $\sig_0^\pm$.   
Referring again to \cite{DigneMichel1991}, the character of $\sig_0^{\sgn(x)}$ is nonzero only on classes of elements the form $\smat{z&0\\c&z}$, where it is given by $\frac12(q-1)\theta_0(z)$ if $c=0$ and $\xi_{-zcx}\theta_0(z)$ if $c\neq 0$.  
Therefore the character $\chi_d^{x}$ of $(\sig_0^{\sgn(x)})^{\eta^d}$ on an element $g=(g_{ij}) \in B\K_d$ is given
by
\begin{align*}
\chi_d^{x}(g) &= \Tr\left((\sig_0^{\sgn(x)})^{\eta^d} \left( \mat{g_{11} & g_{12} \\ g_{21} & g_{22}}\right)\right) \quad \textrm{with $g_{11},g_{22}\in \R^\times, g_{12}\in \R$ and $g_{21}\in \PP^d$}\\
&= \Tr \left(\sig_0^{\sgn(x)} \left( \mat{g_{11} & g_{12}\p^d \\ g_{21}\p^{-d} & g_{22}} \right) \right)\\
&= \begin{cases}
\frac12(q-1)\ch(z) & \text{if $g_{11} \in z + \PP$ for some $z =\pm 1$, and $g_{21}\in \PP^{d+1}$;}\\
\xi_{-xzg_{21}\p^{-d}}\ch(z)  & \text{if $g_{11} \in z + \PP$ for some $z=\pm 1$, and $g_{21} \in \PP^{d}\setminus \PP^{d+1}$,}\\
0 & \textrm{otherwise.}
\end{cases}
\end{align*}
The character of $\tau(\sig_0^{\sgn(x)})^{\eta^d}$ is given by $\tau\chi_d^{x}$.

On the other hand, to calculate the character $\psi_d^{x}$ of $\Ind_{B\K_d \cap T(X)\Gmess}^{B\K_d} \Psi_{\theta,X}$ we use the Frobenius formula.  Since $B\K_d \cap T(X)\Gmess$ is normal in $B\K_d$, $\psi_d^{x}(g)=0$ for $g\notin B\K_d \cap T(X)\Gmess$.   For the remaining values of $g$, we need a set of coset representatives $\Delta$ of $B\K_d/(B\K_d \cap T(X)\Gmess)$; we choose
$$
\Delta = \left\{ \left. \delta_a = \mat{a&0\\0&a^{-1}} \right| a\in \R^\times / \pm \U_{\lrc{d/2}}  \right\}.
$$      
For each $g \in B\K_d  \cap T(X)\Gmess$ choose a factorization $g=t u$ with $t= \smat{z&v\\0&z} \in T(X)$ and $u = (u_{ij}) \in \Gmess\cap B\K_d$.  Then $z\in \{\pm 1\}$ satisfies $z \equiv g_{11}$ mod $\PP^{\lrc{d/2}}$.  We compute $\Psi_{\theta,X}(tu)=\theta(t)\Psi(\Tr(Xu)) = \theta(z)\Psi(-x\pi^{-d}u_{21})$.   Since $u_{21} \equiv zg_{21}$ mod $\PP^{\lrc{d/2}}$, this simplifies to $\Psi_{\theta,X}(g) =\theta(z)\Psi(-xzg_{21}\p^{-d})$.
Hence
\begin{align*}
\psi_d^{x}(g) 
&= \sum_{\delta_a \in \Delta}\Psi_{\theta,X}(\delta_a^{-1}g\delta_a)\\
&=\sum_{a \in \R^\times / \pm \U_{\lrc{d/2}}}\theta(z)\Psi(-xzg_{21}\p^{-d}a^2), \\
&= \begin{cases}
 \theta(z) \frac{q-1}{2}q^{\lrc{d/2}-1} & \textrm{if $g_{21} \in \PP^{d+1}$;}\\
\theta(z)\xi_{-xzg_{21}\p^{-d}} q^{\lrc{d/2}-1} & \textrm{if $g_{21} \in \PP^d\setminus \PP^{d+1}$,}\\
0 &\textrm{if $g\notin B\K_d  \cap T(X)\Gmess$.}
\end{cases}
\end{align*}
Thus
\begin{align*}
I(\chi_d^{x},\psi_d^{x}) &= \frac{1}{\vert B\K_d/\K_{d+1} \vert} \sum_{g\in B\K_d/\K_{d+1}} \chi_d^{x}(g) \overline{\psi_d^{x}(g)}\\
&= \frac{1}{(q-1)q^{2d+2}} \sum_{g\in (B\K_d \cap T(X)\Gmess)/\K_{d+1}} \chi_d^{x}(g) \overline{\psi_d^x(g)}\\
&= \frac{1}{(q-1)q^{2d+2}} \left( \sum_{\substack{z\in \pm1\\ g_{11},g_{12}}} \ch(z)\theta(z) \frac{(q-1)^2}{4}q^{\lrc{d/2}-1}\right.\\
&\quad +
 \left. \sum_{\substack{z\in \pm1,g_{11},g_{12}\\ c\in \resk^\times}} 
\ch(z)\theta(z)\xi_{-xzc}\xi_{xzc}q^{\lrc{d/2}-1} \right)
\end{align*}
where the sums are over all $g_{11} \in z + \PP^{\lrc{d/2}}$ and $g_{12} \in \R$, taken modulo $\PP^{d+1}$,  and where to simplify the expression we have set $c=g_{21}\p^{-d}$ in the second sum, so with $g_{21}$ taken modulo $\PP^{d+1}$ this is effectively a sum over $\resk^\times$.  

We see that the first term vanishes if $\ch(-1)\neq \theta(-1)$; otherwise it has sum 
$$
2\left(\frac14 (q-1)^2q^{\lrc{d/2}-1}\right) \vert \PP^{\lrc{d/2}}/\PP^{d+1}\vert \; \vert \R/\PP^{d+1}\vert = \frac12 (q-1)^2 q^{2d+1}.
$$
Similarly, the second sum vanishes unless $\ch(-1)=\theta(-1)$, in which case it gives
$$
q^{2d+1} \sum_{c\in \resk^\times} \left(\xi_{-xc}\xi_{xc} + \xi_{xc} \xi_{-xc} \right) = 
q^{2d+1}\left( 2\sum_{u\in \resk^\times}\xi_{u}\xi_{-u}\right) = \frac{q^2-1}{2}q^{2d+1}.
$$
Thus these representations intertwine only when $\ch=\theta$, in which case
$$
I(\chi_d^{x},\psi_d^x)  = \frac{1}{(q-1)q^{2d+2}} \left(\frac12 (q-1)^2 q^{2d+1} +\frac{q^2-1}{2}q^{2d+1}\right)=1.
$$
On the other hand, when $\theta = \tau \ch$ we have instead that $I(\tau\chi_d^{x},\psi_d^x)=1$.
\end{proof}

We summarize the conclusions of this section in the following theorem.

\begin{theorem}[Branching Rules for Depth-Zero Supercuspidal Representations] \label{T:depthzero}
Let $\sig = \sig(\om)$ with $\om^2 \neq 1$, and let $\theta$ denote its central character.  Then the decomposition into irreducible $\K$-representations of the restrictions to $\K$ of the corresponding depth-zero supercuspidal representations are given by
$$
\Res_{\K}\cind_{\K}^\G \sig \cong \sig \oplus \bigoplus_{t \geq 1} \left( \pir_{2t}^+(\cht) \oplus \pir_{2t}^-(\cht)\right)
$$
and
$$
\Res_{\K}\cind_{\K^\eta}^\G \sig^\eta \cong \bigoplus_{t \geq 1} \left( \pir_{2t-1}^+(\cht) \oplus \pir_{2t-1}^-(\cht)\right).
$$
On the other hand, for $\sig = \sig_0^\pm$, which each have central character $\ch$, the decompositions are given by
$$
\Res_{\K}\cind_{\K}^\G \sig_0^\pm \cong \sig_0^\pm \oplus \bigoplus_{t \geq 1} \pir_{2t}^\pm(\ch)
$$
and
$$
\Res_{\K}\cind_{\K^\eta}^\G (\sig_0^\pm)^\eta \cong \bigoplus_{t \geq 1} \pir_{2t-1}^\pm(\ch).
$$
\end{theorem}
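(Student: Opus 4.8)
The plan is to assemble the statement from results already in place, with essentially no new computation. The starting point is the Mackey-type decomposition \eqref{E:depthzerodecomp},
$$
\Res_{\K}\cind_{\K}^{\G}\sig \cong \sig \oplus \bigoplus_{t>0}\Ind_{B\K_{2t}}^\K\sig^{\eta^{2t}},
\qquad
\Res_{\K}\cind_{\K^\eta}^{\G}\sig^\eta \cong \bigoplus_{t\geq 0}\Ind_{B\K_{2t+1}}^\K\sig^{\eta^{2t+1}},
$$
so that all that remains is to identify the $\K$-representation $\Ind_{B\K_d}^\K\sig^{\eta^d}$ for each $d>0$, bearing in mind that the $t=0$ term of the first sum is literally $\sig$, a depth-zero irreducible, while the $\K^\eta$-restriction has smallest index $d=1$.

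Next I would fix $d>0$ and use the identity recorded just after \eqref{E:pi0}, namely $\Ind_{B\K_d}^\K\sig^{\eta^d}\cong\pir_d^+(\cht)\oplus\pir_d^-(\cht)$ for $\sig=\sig(\om)$ with central character $\cht$. This rests on Lemma~\ref{L:resbkd}: the restriction $\Res_{B\K_d}\sig^{\eta^d}$ depends on $\om$ only through $\om(-1)$ --- it equals $\Res_{B\K_d}\sig_0^{\eta^d}$, or its twist by $\tau$, according to whether $\cht$ agrees with the central character $\ch$ of $\sig_0$ --- and in either case it splits as a sum of two inequivalent irreducibles; inducing and comparing with the definition \eqref{E:pi0} gives the displayed splitting. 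In the special case $\sig=\sig_0^\pm$, which has central character $\ch$, the definition \eqref{E:pi0} reads $\pir_d^\pm(\ch)=\Ind_{B\K_d}^\K(\sig_0^\pm)^{\eta^d}$ with no $\tau$-twist, so the two summands of $\Ind_{B\K_d}^\K\sig_0^{\eta^d}$ are exactly $\pir_d^+(\ch)$ and $\pir_d^-(\ch)$. Finally Proposition~\ref{P:zeropm} supplies irreducibility, identifying $\pir_d^+(\cht)$ and $\pir_d^-(\cht)$ with the Shalika representations $\Sh_d(\cht,X(-\p^{-d},0))$ and $\Sh_d(\cht,X(-\ep\p^{-d},0))$, each of depth exactly $d$.

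Substituting these identifications back into \eqref{E:depthzerodecomp} --- with $d=2t$ in the $\K$-case and $d=2t-1$ in the $\K^\eta$-case --- produces all four displayed formulas. To finish I would check that the right-hand sides are genuine decompositions into pairwise inequivalent irreducibles: $\sig$ (resp.\ $\sig_0^\pm$) has depth zero, whereas by Theorem~\ref{T:Shalika} each $\pir_d^\pm$ has depth exactly $d$, so summands with distinct $d$ are inequivalent; and $\pir_d^+(\cht)\not\cong\pir_d^-(\cht)$ because they correspond to the two distinct $\K$-orbits of $X(-\p^{-d},0)$ and $X(-\ep\p^{-d},0)$. Hence every restriction is multiplicity free, consistent with the abstract; the degree identity \eqref{E:depthzerodegree} together with the value $\deg\pir_d^\pm(\cht)=\frac12(q^2-1)q^{d-1}$ gives a numerical cross-check but is not needed.

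There is no real obstacle here --- the theorem is a repackaging of Lemma~\ref{L:resbkd} and Proposition~\ref{P:zeropm}. The only care needed is bookkeeping: using the $+/-$ labels of $\pir_d^\pm$ consistently between \eqref{E:pi0} and Proposition~\ref{P:zeropm}, so that $\cind_{\K}^{\G}\sig_0^+$ collects precisely the $+$ family rather than a scrambling of the two; and tracking that the depth-zero constituent $\sig$ appears only for $\K$ (coming from the double coset $\alp^0=I$), while the $\K^\eta$-restriction begins at depth one.
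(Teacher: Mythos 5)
Your proposal is correct and follows essentially the same route as the paper: the paper introduces Theorem~\ref{T:depthzero} with the sentence ``We summarize the conclusions of this section,'' and the intended proof is precisely the assembly you carry out, substituting the identifications from Lemma~\ref{L:resbkd}, the definition \eqref{E:pi0}, and Proposition~\ref{P:zeropm} back into the Mackey decomposition \eqref{E:depthzerodecomp}. Your added remarks on multiplicity-freeness (distinct depths, and $\pi_d^+\not\cong\pi_d^-$ via the two square classes and Theorem~\ref{T:Shalika2}) and the degree cross-check are correct but not strictly needed for the statement as written.
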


\section{Positive depth case} \label{S:posdepth}
We reprise our notation for positive depth supercuspidal representations. 

\begin{theorem} \label{T:61}
Let $(\T,y,r,\phi)$ be a generic tamely ramified cuspidal $\G$-datum with $r>0$.  Let $\rho = \rho(\T,y,r,\phi)$ and set $s=r/2$.  Choose $\Aphi \in \LieT_{-r}$ representing the restriction of $\phi$ to $\T_{s+}$ as in \eqref{E:Aphi}.
Let $\mu = \alp^t\lambda\in \Mu(\T)$ and set $d = r+\delta(\mu)$. Then
\begin{equation} \label{E:two}
\Ind_{\K \cap (\T\G_{y,s})^{\mu}}^\K \rho^{\mu} \cong \Sh_d(\phi^{\mu},\Aphi^{\mu}),
\end{equation}
unless $y=0$ and $t=0$, in which case $\Ind_{\T\G_{0,s}}^{\K}\rho$ is not equivalent to any $\Sh_r(\phi,\Aphi)$. 
\end{theorem}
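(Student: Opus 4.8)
\emph{Plan.} I would split off the exceptional case $y=0$, $t=0$, handled by a quick degree count, and in all remaining cases (where $\delta(\mu)>0$) prove \eqref{E:two} by showing that the irreducible Shalika representation $\Sh_d(\phi^{\mu},\Aphi^{\mu})$ occurs inside the Mackey component, so that equality follows from equality of degrees. For the exceptional case, $\mu=I$, $\delta(I)=0$ and $d=r$, so by Proposition~\ref{P:depth} the representation $\Ind_{\T\G_{0,s}}^{\K}\rho$ has depth $r$, and by Proposition~\ref{P:degree} it has degree $(q-1)q^{r}$. Every ramified representation of $\K$ of depth $r$ in Shalika's classification has degree $\tfrac12 q^{r-1}(q^{2}-1)$ by Theorem~\ref{T:Shalika}, and $(q-1)q^{r}\neq\tfrac12 q^{r-1}(q^{2}-1)$ since $q>1$; hence $\Ind_{\T\G_{0,s}}^{\K}\rho$ is equivalent to no $\Sh_{r}(\theta,X)$, in particular to no $\Sh_{r}(\phi,\Aphi)$. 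This is also precisely where Shalika's hypothesis degenerates: the computation below produces $\Aphi^{\mu}=X(u,v)$ with $\val(v)-\val(u)=2y+4t$, which vanishes exactly when $y=0$ and $t=0$.

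Now assume $\delta(\mu)>0$. I would first verify that $\Sh_d(\phi^{\mu},\Aphi^{\mu})$ is well defined. Using Table~\ref{Table:xt} together with the identity $\val(\gam_2)-\val(\gam_1)=2y$ read from Table~\ref{Table:tori}, one obtains $\Aphi^{\mu}=X(u,v)\in\g_{0,-d}$ with $\val(v)>\val(u)=-d$. By Lemma~\ref{L:Tdeltamu}, $T(\Aphi^{\mu})=\K\cap\T^{\mu}$, and since $\Gmess\subseteq\G_{0,d/2}=\K_{\lrc{d/2}}$ we get $T(\Aphi^{\mu})\cap\Gmess\subseteq\K_{\lrc{d/2}}\cap\T^{\mu}=\T^{\mu}_{\delta(\mu)+\lrc{d/2}}\subseteq\T^{\mu}_{s+}$ (again by Lemma~\ref{L:Tdeltamu} and $\delta(\mu)+\lrc{d/2}>s$); the relation \eqref{E:Aphi}, together with the approximation of $e(Y)$ by $I+Y$, then shows that $\phi^{\mu}$ and $\Psi_{\Aphi^{\mu}}$ agree there, so that $\Sh_d(\phi^{\mu},\Aphi^{\mu})=\Ind_{T(\Aphi^{\mu})\Gmess}^{\K}\Psi_{\phi^{\mu},\Aphi^{\mu}}$ makes sense and is irreducible of depth $d$ and degree $\tfrac12 q^{d-1}(q^{2}-1)$. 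On the other side, $\Ind_{\K\cap(\T\G_{y,s})^{\mu}}^{\K}\rho^{\mu}$ has depth $d$ by Proposition~\ref{P:depth}, the same degree by Proposition~\ref{P:degree}, and is semisimple since it factors through the finite quotient $\K/\K_{d+1}$. Hence \eqref{E:two} reduces to showing that $\Sh_d(\phi^{\mu},\Aphi^{\mu})$ is a subrepresentation of $\Ind_{\K\cap(\T\G_{y,s})^{\mu}}^{\K}\rho^{\mu}$: comparison of degrees then forces equality (and multiplicity one).

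To prove the occurrence, put $A=\K\cap(\T\G_{y,s})^{\mu}$ and $B=T(\Aphi^{\mu})\Gmess$, and apply Frobenius reciprocity together with Mackey's formula, as in Lemma~\ref{L:Hansen}: the multiplicity of $\Sh_d(\phi^{\mu},\Aphi^{\mu})$ in $\Ind_{A}^{\K}\rho^{\mu}$ is a sum over $B\backslash\K/A$, and it suffices that the term for the identity double coset, namely $\dim\Hom_{A\cap B}(\Psi_{\phi^{\mu},\Aphi^{\mu}},\rho^{\mu})$, be nonzero; that is, that the character $\Psi_{\phi^{\mu},\Aphi^{\mu}}$ occur in $\rho^{\mu}|_{A\cap B}$. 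By Proposition~\ref{P:split}, $A=(\K\cap\T^{\mu})(\K\cap\G_{y,s}^{\mu})$, and since $\K\cap\T^{\mu}=T(\Aphi^{\mu})\subseteq B$ we get $A\cap B=(\K\cap\T^{\mu})(A\cap\Gmess)$, which the explicit matrix descriptions \eqref{E:KT}, \eqref{E:KG} and that of $\Gmess$ render fully explicit. By Lemma~\ref{L:Yu}, $\rho^{\mu}$ is $\Psi_{\Aphi^{\mu}}$-isotypic on $\K\cap\G_{y,s+}^{\mu}$ and $\phi^{\mu}$-isotypic on $\ZZ\T^{\mu}_{0+}$, and both characters agree with $\Psi_{\phi^{\mu},\Aphi^{\mu}}$ where their domains meet $A\cap B$. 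When $\T$ is ramified, or $\T$ is unramified with $r$ odd, $\rho^{\mu}=\hat{\phi}^{\mu}$ is a character and one checks directly that $\rho^{\mu}|_{A\cap B}=\Psi_{\phi^{\mu},\Aphi^{\mu}}|_{A\cap B}$. When $\T$ is unramified with $r$ even, $\rho^{\mu}$ is the degree-$q$ representation built from the Heisenberg--Weil representation of $\T\ltimes J^{1}$; here I would argue that $A\cap B$ meets $J^{1}$ in an isotropic subgroup of $J^{1}/J^{1}_{+}$ — with $\Gmess$ playing the role of a polarizing subgroup, precisely because $\Psi_{\Aphi^{\mu}}$ is a genuine character of $\Gmess$ — so that $\rho^{\mu}|_{A\cap B}$ contains the unique character lying above $\Psi_{\Aphi^{\mu}}$ and below $\phi^{\mu}$, which is $\Psi_{\phi^{\mu},\Aphi^{\mu}}$.

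I expect the last step to be the main obstacle: pinning down $A\cap B$ through the various $\mu$-conjugated Moy--Prasad subgroups, with the half-integer bookkeeping of $s$, $y$ and $\delta(\mu)$, and — in the degree-$q$ case — verifying rigorously that $\Psi_{\phi^{\mu},\Aphi^{\mu}}$ really occurs in the restriction of the higher-dimensional representation $\rho^{\mu}$ to the nonabelian group $A\cap B$, i.e.\ that the image of $A\cap B$ in $J^{1}/J^{1}_{+}$ is isotropic for the defining symplectic form. Treating the ramified and unramified tori uniformly is where most of the care goes.
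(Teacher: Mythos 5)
Your overall strategy parallels the paper's proof exactly: isolate the exceptional case $y=t=0$ by degree comparison, and otherwise reduce \eqref{E:two} to a statement of nonzero intertwining on the intersection of the two inducing subgroups, which by equality of degrees and irreducibility of the Shalika side then gives an isomorphism. Your treatment of the exceptional case, the well-definedness of $\Sh_d(\phi^\mu,\Aphi^\mu)$, the reduction via Proposition~\ref{P:split}, and the ramified or odd-$r$ unramified cases (where $\rho$ is a character) all track the paper's argument.

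The genuine gap is exactly where you flag it: the unramified, even-$r$ case where $\rho$ has degree $q$. You propose to argue that the image of $A\cap B\cap J^1$ in $J^1/J^1_+$ is isotropic and conclude that $\rho^\mu|_{A\cap B}$ contains the character $\Psi_{\phi^\mu,\Aphi^\mu}$. Two things are missing. First, isotropy alone would only tell you that $\rho^\mu$ restricted there decomposes into one-dimensional characters; it does not by itself identify which characters appear. Second, and more subtly, the group $\G_{y,s}^\mu\cap\Gmess$ strictly contains $\G_{y,s+}^\mu\cap\Gmess$, and Lemma~\ref{L:Yu} part (2) pins $\rho^\mu$ down only on the smaller group. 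There are in fact $q$ distinct extensions of $\Psi_{\Aphi^\mu}|_{\G_{y,s+}^\mu\cap\Gmess}$ to the larger group, and it is not automatic that the one corresponding to your chosen $\Aphi$ (as opposed to some other representative modulo $\LieT_{-s+1}$) actually occurs in $\rho^\mu$. The paper fills this in by an explicit computation: it shows that the restriction factors through a specific finite abelian quotient $H$, enumerates the dual characters $\Psi_{Y(x)}$ for $x\in\resk$, observes that all $q$ of them occur, and notes that $Y(0)=\Aphi^\mu$ is among them. Your sketch replaces this pivot with the assertion that there is a ``unique character lying above $\Psi_{\Aphi^\mu}$ and below $\phi^\mu$'' and that it occurs, but establishing occurrence is the real content of the step you identify as ``the main obstacle,'' and the proof is not complete without it.
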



\begin{proof}
The final statement of the theorem is evident by comparison of degrees and depth; in fact $\Ind_{\T\G_{0,s}}^\K \rho$ is explicitly an example of what Shalika classified as an (irreducible) unramified representation in \cite{Shalika1967}.

So assume $t>0$ or $y\neq 0$.  By Proposition~\ref{P:split} we have that
\begin{equation} \label{E:firstrep}
\Ind_{\K \cap (\T\G_{y,s})^{\mu}}^\K \rho^{\mu} =\Ind_{(\K \cap \T^\mu)(\K \cap \G_{y,s}^{\mu})}^\K \rho^{\mu}  
\end{equation}
whereas by definition,
\begin{equation} \label{E:secondrep}
\Sh_d(\phi^\mu,\Aphi^\mu) = \Ind_{T(\Aphi^\mu)\Gmess}^\K \Psi_{\phi^\mu,\Aphi^\mu}.
\end{equation}
These two representations have the same depth and degree, by Theorem~\ref{T:Shalika} and Propositions~\ref{P:depth} and \ref{P:degree}.
Since $\Sh_d(\phi^\mu,\Aphi^\mu)$ is irreducible, it suffices to prove that they intertwine; in particular, it suffices to show that 
$\rho^\mu$ and $\Psi_{\phi^\mu,\Aphi^\mu}$ intertwine on the intersection
$$
\left( (\K \cap \T^\mu)(\K \cap \G_{y,s}^{\mu}) \right) 
\cap \left( T(\Aphi^\mu)\Gmess \right).
$$
Noting that $T(\Aphi^\mu) = \K \cap \T^\mu$, this intersection is simply
\begin{equation} \label{E:intgroup}
T(\Aphi^\mu)(\G_{y,s}^{\mu}\cap\Gmess).
\end{equation}
When $\G_{y,s} = \G_{y,s+}$, $\rho$ is a character, and the restriction of $\rho^\mu$ to an element $tu$ of this group, with $t\in T(\Aphi^\mu)$ and $u \in \G_{y,s}^{\mu}\cap\Gmess$, is given by 
$
\rho^\mu(tu) = \phi^\mu(t) \Psi_\Aphi^\mu(u) = \phi^\mu(t) \Psi_{\Aphi^\mu}(u) = \Psi_{\phi^\mu,\Aphi^\mu}(tu),
$ 
so 
\eqref{E:firstrep} and \eqref{E:secondrep} clearly intertwine.



When $\G_{y,s} \neq \G_{y,s+}$, that is, when $s$ is an integer and $\T$ is unramified so $y\in \{0,1\}$, the inducing representation $\rho^\mu$ has degree $q$.  In what follows we require the additional hypothesis that $\delta(\mu)>0$,
which excludes exactly the case $t=0$, $y=0$.

By Lemma~\ref{L:Tdeltamu}, we have $T(\Aphi^\mu) = Z\T_{\delta(\mu)}^\mu \subseteq Z\T_{0+}^\mu$, so by Part (1) of Lemma~\ref{L:Yu}, we know the restriction of $\rho^\mu$ to $T(\Aphi^\mu)$ is $\phi^\mu$-isotypic.  However, as
$$
\G_{y,s}^\mu \cap \G_{[0,\frac12],d/2} = 
\left\{ \mat{\U_{\lrc{d/2}} & \PP^{\lrc{d/2}} \\
\PP^{s+\delta(\mu)} & \U_{\lrc{d/2}} } \right\}\cap \G \not\subseteq \G_{y,s+}^\mu,
$$
Part (2) of the lemma does not apply.  In fact, although $\Aphi$ is uniquely determined by $\phi$ in $\LieT_{-r}$ only modulo $\LieT_{-s}$, we can see that the restriction of $\Psi_{\Aphi^\mu}$ to the above subgroup depends on the choice of $\Aphi$ modulo $\LieT_{-s+1}$.   We claim that for any of these $q$ choices of $\Aphi$ modulo $\LieT_{-s+1}$, $\Psi_{\phi^\mu,\Aphi^\mu}$ intertwines with $\rho^\mu$, and hence that the isomorphism \eqref{E:two} is independent of the choice.

To see this, note that the restriction of $\rho^\mu$ factors through the quotient 
$$
H = (\G_{y,s}^\mu \cap \G_{[0,\frac12],d/2})/ (\G_{y,r+}^\mu \cap \G_{[0,\frac12],d/2}).
$$
The group $\G_{y,r+}^\mu \cap \G_{[0,\frac12],d/2}$ is given as follows.
Set $A = \max\{r+1, \lceil d/2\rceil\}$, $B = \max\{ r+1-\delta(\mu),\lceil d/2\rceil\}$ and $C= r+1+\delta(\mu)$.  Then
$$
\G_{y,r+}^\mu \cap \G_{[0,\frac12],d/2} = \left\{ \mat{\U_{A} & \PP^B \\ \PP^{C} & \U_{A}} \right\} \cap \G.
$$
Since $2(\lceil y/2\rceil+s+t)>r$ and $(\lceil y/2\rceil+s+t)+(y+s+2t) > r+y+2t$, the quotient group $H$ is abelian.  Thus the restriction 
of $\rho^\mu$ to $H$ decomposes as a sum of $q$ characters.  The distinct characters of $H$ are given by $\Psi_Y$, where $Y$ is chosen from the dual lattice quotient; more precisely, the distinct characters correspond to elements $Y$ of the set
$$
\widehat{H} = \left\{ \mat{f & g \\ h & -f} \mid f \in \PP^{-A}/\PP^{-\lceil d/2 \rceil}, h \in  \PP^{-B}/\PP^{-\lceil d/2 \rceil}, g \in \PP^{-C}/\PP^{-s-\delta(\mu)} \right\}.
$$
We thus have $\Res_{\G_{y,s}^\mu \cap \G_{[0,\frac12],d/2}}\rho^\mu = \oplus_i \Psi_{Y_i}$ for some $Y_i \in \widehat{H}$.   By Part (2) of Lemma~\ref{L:Yu}, $\rho^\mu$ is $\Psi_{\Aphi^\mu}$-isotypic on $\G_{y,s+}^\mu$.  Thus the characters $\Psi_{Y_i}$ and $\Psi_{\Aphi^\mu}$ must coincide on their restriction to $\G_{y,s+}^\mu$.  The elements of $\widehat{H}$ satisfying this condition are exactly those of the form
$$
Y(x) = x\mat{0&\p^{-C}\\0&0} + \Aphi^\mu,
$$
for some $x\in \R$.   We conclude that $\Res_{\G_{y,s}^\mu \cap \G_{[0,\frac12],d/2}}\rho^\mu = \oplus_{x\in \resk}\Psi_{Y(x)}$.  In particular, since $Y(0)=\Aphi^\mu$, it follows that the representation $\rho^\mu$ and the character $\Psi_{\phi^\mu,\Aphi^\mu}$ intertwine on the intersection \eqref{E:intgroup}, whence the result follows by Mackey theory. 
\end{proof}


We summarize the result in the following theorem.

\begin{theorem}[Branching Rules for Positive-Depth Supercuspidal Representations] \label{T:positivedepth}
Let $\rho = \rho(\T,y,r,\phi)$ for a generic tamely ramified cuspidal $\G$-datum of positive depth $r$, and let $\Aphi$ be a $\G$-generic element of depth $r$ representing $\phi$. 
Then the decomposition into irreducible $\K$-representations of the restriction to $\K$ of the corresponding supercuspidal representation of $\G$  is, for $y=0$, $\frac12$ and $1$, respectively:
\begin{align*}
\Res_{\K}\cind_{\T\G_{0,s}}^\G \rho &\cong \Ind_{\T\G_{0,s}}^\K \rho \oplus 
\bigoplus_{t > 0} \left(\Sh_{r+2t}(\phi^{\alp^t},\Aphi^{\alp^t}) \oplus  \Sh_{r+2t}(\phi^{\alp^t\ecrep},\Aphi^{\alp^t\ecrep}) \right),
\\
\Res_{\K}\cind_{\T\G_{\frac12,s}}^\G \rho &\cong \Sh_{r+\frac12}(\phi,\Aphi) \oplus 
\bigoplus_{t > 0} \left(\Sh_{r+\frac12+2t}(\phi^{\alp^t},\Aphi^{\alp^t}) \oplus  \Sh_{r-\frac12+2t}(\phi^{\alp^t\w},\Aphi^{\alp^t\w}) \right),
\\
\Res_{\K}\cind_{\T\G_{1,s}}^\G \rho &\cong  \Sh_{r+1}(\phi,\Aphi) \oplus 
\bigoplus_{t > 0} \left(\Sh_{r+2t+1}(\phi^{\alp^t},\Aphi^{\alp^t}) \oplus  \Sh_{r+2t+1}(\phi^{\alp^t\ecrep^\eta},\Aphi^{\alp^t\ecrep^\eta}) \right).
\end{align*}
\end{theorem}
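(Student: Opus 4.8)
The final theorem is essentially a repackaging of the Mackey decomposition \eqref{E:pddecomp} once the double coset representatives $\Mu(\T)$ and the identification of each Mackey component from Theorem~\ref{T:61} are in hand, so the plan is simply to assemble these. I would start from \eqref{E:pddecomp}, recall from \eqref{E:MT} that $\Mu(\T)$ consists of $I$ together with the elements $\alp^t\lambda$ for $t>0$ and $\lambda\in\Lambda(\T)$, and apply Theorem~\ref{T:61} to each summand. That theorem gives $\Ind_{\K\cap(\T\G_{y,s})^{\mu}}^\K\rho^{\mu}\cong\Sh_{r+\delta(\mu)}(\phi^{\mu},\Aphi^{\mu})$, an irreducible $\K$-representation, for every $\mu=\alp^t\lambda$ with $t>0$, and also for $\mu=I$ when $y\neq0$; whereas for $\mu=I$ and $y=0$ it leaves the summand as $\Ind_{\T\G_{0,s}}^\K\rho$, which Theorem~\ref{T:61} identifies as one of Shalika's irreducible unramified representations.

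The remaining work is the bookkeeping of $\delta(\mu)$ in the three cases $y\in\{0,\frac12,1\}$. These correspond, via Table~\ref{Table:tori} and Lemma~\ref{L:doublecosets}, to the unramified torus $\T_{1,\ep}\subset\K$ with $\Lambda(\T)=\{I,\ecrep\}$, to each of the four ramified tori $\T\subset\K$ with $\Lambda(\T)=\{I,\w\}$, and to the unramified torus $\T_{1,\ep}^\eta\subset\K^\eta$ with $\Lambda(\T)=\{I,\ecrep^\eta\}$, respectively. Unwinding the definition of $\delta$ gives, for $y=0$: $\delta(\alp^t)=\delta(\alp^t\ecrep)=2t$; for $y=\frac12$: $\delta(I)=\frac12$, $\delta(\alp^t)=2t+\frac12$, and $\delta(\alp^t\w)=2t-\frac12$ (the last via the $\lambda=\w$ branch); for $y=1$: $\delta(I)=1$ and $\delta(\alp^t)=\delta(\alp^t\ecrep^\eta)=2t+1$. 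Substituting $d=r+\delta(\mu)$, together with the conjugates $\phi^\mu$ and $\Aphi^\mu$ (the latter as computed in Table~\ref{Table:xt}), into the Mackey sum reproduces the three displayed decompositions, and irreducibility of every summand follows from Theorem~\ref{T:Shalika} and the identification just mentioned for the exceptional term.

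I do not expect a genuine obstacle here, since all the content lies in Theorem~\ref{T:61} and the present statement is its corollary. The only points needing care are organizational: keeping the convention that $\mu=I$ is its own double coset rather than a $t=0$ specialization of $\alp^t\lambda$; noting that for $y=1$ the relevant torus lies in $\K^\eta$, so the double coset space being enumerated is $\K\backslash\G/\T_{1,\ep}^\eta\G_{1,s}$ with representatives $\Mu(\T_{1,\ep}^\eta)$; and selecting the correct branch in the definition of $\delta$ in the ramified case, which is the single place the two branches differ. If in addition one wants the decomposition to be multiplicity-free, then the distinctness of the two depth-$(r+2t)$ summands appearing when $y=0$ would follow by comparing the $\K$-orbits of $\Aphi^{\alp^t}$ and $\Aphi^{\alp^t\ecrep}$ against Shalika's orbit list, but this is not needed for the displayed identities.
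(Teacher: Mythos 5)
Your proposal is correct and is essentially the paper's own (implicit) argument: the paper presents Theorem~\ref{T:positivedepth} as a summary obtained by plugging the double-coset parametrization $\Mu(\T)$ from \eqref{E:MT} and the identification of each Mackey component from Theorem~\ref{T:61} into the decomposition \eqref{E:pddecomp}, and your bookkeeping of $\delta(\mu)$ in the three cases $y\in\{0,\frac12,1\}$ (including the separate treatment of $\mu=I$ and the $\lambda=\w$ branch) matches what is needed.
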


\section{Intertwining results} \label{S:intertwining}


In this section, we answer the question of when, and to what extent, two supercuspidal representations of $\G$ intertwine as representations of $\K$.  By the results in the preceding sections, this can be reduced to determining the equivalences between Shalika's representations.  

\subsection{Equivalences among Shalika's ramified representations}

Using the notation of Section~\ref{SS:repR}, the following theorem is deduced from \cite{Shalika1967}.

\begin{theorem}[Shalika] \label{T:Shalika2}
Suppose $X_1=X(u_1,v_1)$ and $X_2=X(u_2,v_2)$.  For each $i\in \{1,2\}$ suppose $-d=\val(u_i)<\val(v_i)$ and let $\theta_i$ be a character of $T(X_i)$ which agrees with $\Psi_{X_i}$ on $T(X_i)\cap \Gmess$.    
Then
\begin{enumerate}
\item If $X_1=X_2$ then 
$\Sh_d(\theta_1,X_1) \cong \Sh_d(\theta_2,X_2)$ if and only if $\theta_1 = \theta_2$.
\item If $\Sh_d(\theta_1,X_1) \cong \Sh_d(\theta_2,X_2)$ then there exists a diagonal matrix $g \in \K$ such that $\Psi_{X_1} = \Psi_{X_2}^g$. 
\end{enumerate}
\end{theorem}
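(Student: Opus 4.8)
Both parts are translations into the present notation of Shalika's determination of the equivalences among his ramified representations \cite[\S4.3]{Shalika1967}; I sketch how each follows. Throughout I use the Clifford-theoretic description $\Sh_d(\theta,X)=\Ind_{T(X)\Gmess}^{\K}\Psi_{\theta,X}$ from Theorem~\ref{T:Shalika}, in which $T(X)\Gmess$ is the full $\K$-normalizer of the character $\Psi_X$ of $\Gmess$, together with the basic fact (the first half of \cite[\S4.3]{Shalika1967}) that $\Sh_d(\theta,X)\cong\Sh_d(\theta',X')$ if and only if there is $g\in\K$ with $\Psi_{X}=\Psi_{X'}^g$ and $\Psi_{\theta,X}=(\Psi_{\theta',X'})^{g}$, the two normalizers being then equal.

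For (1), take $X_1=X_2=X$. The relation $\Psi_{X}=\Psi_{X}^g$ then says precisely that $g$ lies in $T(X)\Gmess$; since $T(X)$ is abelian and centralizes $X$, one checks that $(\Psi_{\theta_2,X})^g$ again restricts to $\theta_2$ on $T(X)$, so $\Psi_{\theta_1,X}=(\Psi_{\theta_2,X})^g$ forces $\theta_1=\theta_2$. (Equivalently: applying the Mackey intertwining-number formula to the two inductions from $H=T(X)\Gmess$, only the identity double coset in $H\backslash\K/H$ can support intertwining, and that term is $\dim\Hom_H(\Psi_{\theta_1,X},\Psi_{\theta_2,X})$, which equals $1$ exactly when $\theta_1=\theta_2$, because $\Psi_{\theta_1,X}\Psi_{\theta_2,X}^{-1}$ is the character of $H$ trivial on $\Gmess$ and equal to $\theta_1\theta_2^{-1}$ on $T(X)$.) The converse is trivial.

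For (2), suppose $\Sh_d(\theta_1,X_1)\cong\Sh_d(\theta_2,X_2)$; by the criterion above there is $g\in\K$ with $\Psi_{X_1}=\Psi_{X_2}^g$, i.e. $gX_2g^{-1}\equiv X_1$ modulo the lattice that kills $\Psi_{X}$ (entries in $\PP^{\lrc{(-d+1)/2}}$, except the upper-right corner in $\PP^{\lrc{-d/2}}$). It remains to show that $g$ may be taken diagonal. Write $X_i=X(u_i,v_i)$ with $\val(u_i)=-d<\val(v_i)$, so that the dominant entry of each $X_i$ lies in the upper-right corner, and decompose $g$ through the Bruhat cell it occupies. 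A nontrivial Weyl component would carry the dominant entry of $gX_2g^{-1}$ into the lower-left corner, which is incompatible with $gX_2g^{-1}\equiv X_1$; in the nilpotent case $v_i=0$ the conjugating element is forced to be upper triangular, and its unipotent part already lies in $T(X_2)$. In general one has to check that the unipotent parts of $g$ are forced into $\Gmess\, T(X_2)$, on which conjugation fixes $\Psi_{X_2}$; this done, $g$ may be replaced by its diagonal part, which is the required matrix. This last check is exactly Shalika's $\K$-orbit analysis in $\g_{0,-d}$.

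\noindent\textbf{Main obstacle.}
The single non-formal step is the reduction of $g$ to a diagonal matrix in (2). Its subtlety is that $\Gmess$ is \emph{not} normal in $\K$ when $d$ is even --- the lower-left corner of $\Gmess$ is one notch tighter than that of the congruence subgroup $\K_{\lrc{d/2}}$, and this is exactly what pins $v$ down to the finer lattice $\PP^{\lrc{(-d+1)/2}}$ --- so one cannot simply restrict to a normal congruence subgroup and read off the characters that occur; one must instead track the double cosets $\Gmess\backslash\K/T(X_2)\Gmess$ and the overlaps $\Gmess\cap\Gmess^g$. I would organize that bookkeeping around Shalika's explicit list of $\K$-orbit representatives in $\g_{0,-d}$ \cite[Lemma~4.2.2]{Shalika1967} (already invoked in Theorem~\ref{T:Shalika}), rather than redo the orbit computation from scratch.
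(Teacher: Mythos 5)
Your proof of part (1) is essentially the paper's (Clifford theory applied to the normalizer $T(X)\Gmess$ of $\Psi_X$, plus a Mackey count), and the deduction of the diagonal form of $g$ once one \emph{has} a conjugating $g$ is close to what the paper does. The genuine gap is the ``basic fact'' you invoke at the outset: that $\Sh_d(\theta_1,X_1)\cong\Sh_d(\theta_2,X_2)$ already forces some $g\in\K$ with $\Psi_{X_1}=\Psi_{X_2}^g$. You treat this as given from \cite[\S4.3]{Shalika1967}, but it is \emph{not} directly available there when $d$ is even, and your ``Main obstacle'' paragraph — Bruhat cells, $\K$-orbit representatives, double cosets $\Gmess\backslash\K/T(X_2)\Gmess$ — addresses a different question (normalizing $g$) than the one that actually needs work (producing $g$ at all).

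Concretely: for $d$ odd, $\Gmess=\K_{\lrc{d/2}}$ is a congruence subgroup, hence normal, so standard Clifford theory yields your ``basic fact'' and Shalika's Theorem 4.2.1 states it. For $d$ even, $\Gmess$ is not normal (its lower-left corner is one step tighter than $\K_{d/2}$), and Shalika's Theorem 4.2.5 only gives a $g$ with $X_1^g\equiv X_2$ modulo $\g_{0,-d/2}$. That pins down the upper-right entry $u$ modulo $\PP^{-d/2}$ — exactly enough — but pins down $v$ only modulo $\PP^{-d/2}$, whereas the character $\Psi_X$ on $\Gmess$ sees $v$ modulo $\PP^{-d/2+1}$. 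So after conjugating, you know $\Psi_{X_1}$ and $\Psi_{X_2}$ agree on the normal subgroup $\G_{0,(d+1)/2}\subsetneq\Gmess$, but not yet on $\Gmess$ itself. Your ``basic fact'' assumes away precisely this one-step discrepancy. The paper closes the gap by a two-stage Clifford argument through the intermediate group $T(X)\G_{0,d/2}$ (the normalizer in $\K$ of $\Res_{\G_{0,(d+1)/2}}\Psi_X$): Shalika's Lemma 4.1.1 forces the intermediate inductions $\Xi_1\cong\Xi_2$, and then — because $\Gmess$ \emph{is} normal inside $T(X)\G_{0,d/2}$ — Lemma 4.1.2 produces an $h\in T(X)\G_{0,d/2}$ with $\Psi_{X_1}=\Psi_{X_2}^h$, which for antidiagonal $X_i$ forces $\Psi_{X_1}=\Psi_{X_2}$. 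To repair your proposal you would need to reproduce this intermediate-normalizer step; the orbit/Bruhat bookkeeping you sketch does not by itself supply it.
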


Recall that $\Psi_{X_1} =\Psi_{X_2}$ as characters of $\Gmess$ only if $u_1\equiv u_2$ modulo $\PP^{\lrc{-d/2}}$ and $v_1\equiv v_2$ modulo $\PP^{\lrc{(-d+1)/2}}$.

\begin{proof}[Comments on proof]
The first statement comes from \cite[Thm 4.2.1, Thm 4.2.5]{Shalika1967}.  It follows from Clifford theory, since $T(X)\Gmess$ is the normalizer of $\Psi_X$ in $\K$ and thus $\Ind_{\Gmess}^{T(X)\Gmess}\Psi_X$ decomposes as a multiplicity-free direct sum of characters of the form $\Psi_{\theta,X}$. 

If $d$ is odd, then the second property is stated explicitly in \cite[Thm 4.2.1]{Shalika1967} as $\Psi_{X_1} = \Psi_{X_2}^g = \Psi_{X_2^g}$ for some $g\in \K$; by our choice of representatives of the orbits we may without loss of generality assume $g$ is diagonal.  

If $d$ is even, then the second property is instead implicit in the proof of \cite[Thm 4.2.5]{Shalika1967}.  Namely, the statement of that theorem gives a $g\in\K$ conjugating $X_1$ to some $X_1'$ where $X_1'\equiv X_2$ modulo $\g_{0,-d/2}$.  Thus $g$ may be assumed to be diagonal and replacing $\Sh_d(\theta_1,X_1)$ with its conjugate by $g$, we may assume that $X_1=X_1'$.  Then $\Psi_{X_1}$ and $\Psi_{X_2}$ are equal upon restriction to $\G_{0,(d+1)/2}\subsetneq \Gmess$.  One checks that the normalizer of $\Res_{\G_{0,(d+1)/2}}\Psi_{X_i}$ is $T(X_i)\G_{0,d/2}$, whence $T(X_1)\G_{0,d/2}= T(X_2)\G_{0,d/2}$.

By \cite[Lemma 4.1.1]{Shalika1967}, we know that if a character $\psi$ of a subgroup $H$ occurs in the restriction to $H$ of $\Xi_1$ and $\Xi_2$, where each $\Xi_i$ is an irreducible representation of the normalizer $N(\psi)$ of $\psi$ in $\K$, then $\Ind_{N(\psi)}^\K \Xi_1 \cong \Ind_{N(\psi)}^\K \Xi_2$ if and only if $\Xi_1 \cong \Xi_2$.  
In our case, for each $i\in \{1,2\}$ set
$$
\Xi_i = \Ind_{T(X_i)\Gmess}^{T(X_i)\G_{0,d/2}}\Psi_{\theta_i,X_i};
$$ 
clearly $\Res_{\G_{0,(d+1)/2}}\Psi_{X_i}$ occurs in the restriction of each $\Xi_i$.  Since $\Ind_{T(X_1)\G_{0,d/2}}^\K \Xi_1 = \Sh_d(\theta_1,X_1) \cong \Ind_{T(X_1)\G_{0,d/2}}^\K \Xi_2$
is irreducible, so are the $\Xi_i$, so applying the above result yields $\Xi_1 \cong \Xi_2$.


 
As noted at the beginning of the proof, $\Xi_i$ occurs as an irreducible component of $\Ind_{\Gmess}^{T(X_i)\G_{0,d/2}}\Psi_{X_i}$, and so these representations of $T(X_1)\G_{0,d/2}=T(X_2)\G_{0,d/2}$ intertwine.  But $\Gmess$ is normal in $T(X_1)\G_{0,d/2}$ so by  \cite[Lemma 4.1.2]{Shalika1967} the representations are isomorphic, and there exists some $h\in \T(X_1)\G_{0,d/2}$ such that 
 $\Psi_{X_1} = \Psi_{X_2}^h$.    Because both $X_1$ and $X_2$ are already antidiagonal, this forces $\Psi_{X_1} = \Psi_{X_2}$, as required.
\end{proof}

Throughout this section, given $X=X(u,v)$ we denote by ``$t(a,b)\in T(X)$'' the element $\smat{a&b\\bu^{-1}v & a}$, leaving the dependence on $u$ and $v$ implicit.  Recall that for any $u\in \ratk^\times$, if $X=X(u,0)$ then $T(X)=ZU$ for $U$ the unipotent upper triangular subgroup of $\K$.

\begin{corollary} \label{C:triangle}
Suppose $X=X(u,0)$ and $X'=X(u',0)$, with $\val(u)=\val(u')=-d$.  Then $\Sh_d(\theta,X)\cong \Sh_d(\theta',X')$ if and only if there is some $c\in \R^\times$ such that $c^2u=u'$ and for all $t(z,b)\in T(X)$, we have $\theta(t(z,c^{-2}b))=\theta'(t(z,b))$.
\end{corollary}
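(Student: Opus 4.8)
The plan is to deduce the equivalence from the two parts of Theorem~\ref{T:Shalika2}, by tracking how the pair $(\theta,X)$ transforms under conjugation by a diagonal element $g=\diag(c,c^{-1})$ of $\K$ (necessarily $c\in\R^\times$). Two identities will be used throughout. Since $X(a,0)=\smat{0&a\\0&0}$, one computes directly that $gX(a,0)g^{-1}=X(c^2a,0)$. Also, because $v=0$ forces $T(X(a,0))=ZU$ with $t(z,b)=\smat{z&b\\0&z}$ -- and this parametrization is the \emph{same} for $X$ and for $X'$, as both have $v=0$ -- one gets $g^{-1}t(z,b)g=t(z,c^{-2}b)$, hence $\theta^g(t(z,b))=\theta(t(z,c^{-2}b))$. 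Finally, such a $g$ normalizes $\Gmess$ (it scales the off-diagonal entries by the units $c^{\pm2}$), so $\Psi_X^g=\Psi_{gXg^{-1}}$ and $(T(X)\cap\Gmess)^g=T(gXg^{-1})\cap\Gmess$; thus, whenever $\theta$ agrees with $\Psi_X$ on $T(X)\cap\Gmess$, the character $\theta^g$ agrees with $\Psi_{gXg^{-1}}$ on $T(gXg^{-1})\cap\Gmess$, so that $\Sh_d(\theta^g,gXg^{-1})$ is defined.

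\textbf{The ``if'' direction.} Given $c\in\R^\times$ with $c^2u=u'$ and $\theta(t(z,c^{-2}b))=\theta'(t(z,b))$ for all $t(z,b)\in T(X)$, put $g=\diag(c,c^{-1})$. Then $gXg^{-1}=X(c^2u,0)=X'$, and the conjugation formula for characters of $T$ gives $\theta^g=\theta'$ on $T(X')$. The elementary observation that $\Sh_d(\theta,X)\cong\Sh_d(\theta^g,X^g)$ for all $g\in\K$ (recorded in the comments on the proof of Theorem~\ref{T:Shalika}) then yields $\Sh_d(\theta,X)\cong\Sh_d(\theta^g,X^g)=\Sh_d(\theta',X')$.

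\textbf{The ``only if'' direction.} Suppose $\Sh_d(\theta,X)\cong\Sh_d(\theta',X')$. By Theorem~\ref{T:Shalika2}(2) there is a diagonal $g_0=\diag(\gamma,\gamma^{-1})\in\K$ with $\Psi_X=\Psi_{X'}^{g_0}=\Psi_{g_0X'g_0^{-1}}=\Psi_{X(\gamma^2u',0)}$. By the remark following Theorem~\ref{T:Shalika2} (and $v=v'=0$) this gives only $u\equiv\gamma^2u'\pmod{\PP^{\lrc{-d/2}}}$, i.e.\ $u/(\gamma^2u')\in\U_N$ with $N=\lrc{-d/2}+d$; a quick check shows $N=d/2$ or $(d+1)/2$ according as $d$ is even or odd, in either case $N\geq1$. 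Since $p\neq2$, Hensel's lemma gives $\U_1\subseteq(\R^\times)^2$, so $u/(\gamma^2u')=\beta^2$ for some $\beta\in\R^\times$; replacing $g_0$ by $g=\diag(\gamma\beta,(\gamma\beta)^{-1})$ yields $gX'g^{-1}=X(\gamma^2\beta^2u',0)=X$ \emph{on the nose}. Hence $\Sh_d(\theta,X)\cong\Sh_d(\theta',X')\cong\Sh_d((\theta')^g,(X')^g)=\Sh_d((\theta')^g,X)$, and Theorem~\ref{T:Shalika2}(1) (both arguments now equal to $X$) forces $\theta=(\theta')^g$ on $T(X)$. Unwinding with $c=(\gamma\beta)^{-1}$ (so that $c^2u=u'$), the conjugation formula gives $\theta(t(z,c^{-2}b))=(\theta')^g(t(z,c^{-2}b))=\theta'(t(z,b))$ for all $t(z,b)\in T(X)$, which is the asserted condition.

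\textbf{Main obstacle.} The only genuine point is the gap in the ``only if'' direction: Theorem~\ref{T:Shalika2}(2) produces a diagonal conjugator matching $\Psi_X$ and $\Psi_{X'}$ only modulo the congruence $\PP^{\lrc{-d/2}}$ that $\Psi_X$ cannot detect, so one must spend the hypothesis $p\neq2$ to absorb the residual principal-unit factor into a square in $\R^\times$ and thereby upgrade to an honest conjugation before Theorem~\ref{T:Shalika2}(1) can be applied. Everything else -- the two conjugation identities, the coincidence $T(X)=T(X')=ZU$ when $v=v'=0$, and the admissibility of the conjugated characters -- is routine bookkeeping.
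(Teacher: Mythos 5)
Your proof is correct and follows essentially the same route as the paper's: both directions rest on Theorem~\ref{T:Shalika2}, with part (2) furnishing a diagonal conjugator that one upgrades to an exact conjugation (the paper disposes of this with ``replacing $c$ by a suitable multiple if necessary,'' which is exactly the Hensel/$p\neq 2$ observation that $\U_1\subseteq(\R^\times)^2$ you spell out) before invoking part (1) to identify the characters. Your write-up is simply more explicit about that intermediate step and about the conjugation bookkeeping.
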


\begin{proof}
By part (2) of Theorem~\ref{T:Shalika2}, if $\Sh_d(\theta,X)\cong \Sh_d(\theta',X')$ then there exists some $g=\diag(c,c^{-1})\in \K$ such that $\Psi_X^g = \Psi_{X'}$.    Thus $c^2u \equiv u'$ modulo $\PP^{\lrc{-d/2}}$, so replacing $c$ by a suitable multiple if necessary we may assume $c^2u=u'$.  Therefore $\Sh_d(\theta,X)\cong \Sh_d(\theta^g,X^g)\cong\Sh_d(\theta',X')$ and $X^g=X'$, so by part (1), we must have $\theta^g = \theta'$, which is the property sought.  The converse is immediate.
\end{proof}

We wish to generalize this Corollary to provide an explicit complement to part (2) of Theorem~\ref{T:Shalika2}.  The challenge lies in that in general $T(X_1)\neq T(X_2)$ even though $\Psi_{X_1}=\Psi_{X_2}$ and thus $T(X_1)\Gmess=T(X_2)\Gmess$.

Let $X=X(u,v)$ and $X' = X(u',v')$ be given and set $n= \val(u^{-1}v-u'^{-1}v')$.  Suppose $t'= t(a',b) \in T(X')$; so $a'^2 = 1+b^2u'^{-1}v'$.  Let $a \in \R$ be the unique solution to 
$
a^2 = 1 + b^2u^{-1}v
$
satisfying $a\equiv a' \mod \PP^n$.  Let $t$ be the corresponding element $t(a,b)\in T(X)$, which we call the \emph{transfer} of $t'$ to $T(X)$.  The transfer is not a homomorphism but has the property that
\begin{equation}\label{E:tinvt}
t^{-1}t' = \left[ \begin{matrix} 
1+(a-a')a' & (a-a')b\\ 
(a-a')bu^{-1}v+ab(u'^{-1}v'-u^{-1}v) & 1+a(a-a') \end{matrix} \right] \in \G_{0,n}.
\end{equation}
In particular, when  $T(X')\Gmess = T(X)\Gmess$ (equivalently, when $n\geq \lrc{(d+1)/2}$, which implies $t^{-1}t'\in \Gmess$) the transfer gives a refactorization 
\begin{equation}\label{E:refactorization}
T(X')\Gmess \ni t'g' = t ((t^{-1}t')g') = tg \in T(X)\Gmess.
\end{equation}

\begin{proposition} \label{P:theta}
Let $X=X(u,v)$ with $\val(v) > \val(u) = -d$, and suppose $\theta$ is a character of $T(X)$ extending $\Psi_X$.  Suppose $X' = X(u',v')$ is such that $\Psi_X = \Psi_{X'}$.  For any $t'\in T(X')$ let $t\in T(X)$ be its transfer and set
\begin{equation} \label{E:theta}
\theta'(t') = \theta(t)\Psi_X(t^{-1}t').
\end{equation}
Then $\theta'$ is a character of $T(X')$ extending $\Psi_{X'}$ and $\Sh_d(\theta,X) \cong \Sh_d(\theta',X')$.  Moreover, if $u^{-1}v\equiv u'^{-1}v'$ modulo $\PP^{d+1}$ then $\Sh_d(\theta,X) \cong \Sh_d(\theta',X')$ if and only if 
$\theta(t)=\theta'(t')$ for all such transfer pairs $(t,t')$.
\end{proposition}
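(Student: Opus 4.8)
The key observation is that the formula \eqref{E:theta} should not be checked to be multiplicative by hand, but rather recognized as the restriction to $T(X')$ of the canonical extension of $\theta$. Since $\Psi_X=\Psi_{X'}$, Theorem~\ref{T:Shalika} gives $T(X)\Gmess=T(X')\Gmess=:N$; let $\Psi_{\theta,X}$ be the character of $N$ extending $\theta$ and $\Psi_X$, and set $\theta':=\Res_{T(X')}\Psi_{\theta,X}$. This is a character of $T(X')$ by construction, and it extends $\Psi_{X'}$ since $\Res_{\Gmess}\Psi_{\theta,X}=\Psi_X=\Psi_{X'}$. To identify this $\theta'$ with the one in \eqref{E:theta}, I would first note that $\Psi_X=\Psi_{X'}$ forces $u\equiv u'\bmod\PP^{\lrc{-d/2}}$ and $v\equiv v'\bmod\PP^{\lrc{(-d+1)/2}}$, whence a short valuation computation (using $\val(u^{-1}v),\val(u'^{-1}v')\geq 1$) gives $n:=\val(u^{-1}v-u'^{-1}v')\geq\lrc{(d+1)/2}$; by \eqref{E:tinvt} this places $t^{-1}t'\in\Gmess$, so the refactorization \eqref{E:refactorization} is legitimate and $\theta'(t')=\Psi_{\theta,X}(t')=\Psi_{\theta,X}(t)\,\Psi_{\theta,X}(t^{-1}t')=\theta(t)\,\Psi_X(t^{-1}t')$, matching \eqref{E:theta}.

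For the isomorphism $\Sh_d(\theta,X)\cong\Sh_d(\theta',X')$, I would argue that the canonical extension $\Psi_{\theta',X'}$ of $\theta'$ and $\Psi_{X'}$ to $T(X')\Gmess=N$ is nothing but $\Psi_{\theta,X}$: both are characters of $N$ restricting to $\theta'$ on $T(X')$ and to $\Psi_X=\Psi_{X'}$ on $\Gmess$, so they coincide by the uniqueness in Theorem~\ref{T:Shalika}. Therefore $\Sh_d(\theta',X')=\Ind_N^\K\Psi_{\theta',X'}=\Ind_N^\K\Psi_{\theta,X}=\Sh_d(\theta,X)$, in fact as the same representation.

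For the ``moreover'', I would add the stronger hypothesis $u^{-1}v\equiv u'^{-1}v'\bmod\PP^{d+1}$, i.e. $n\geq d+1$. Feeding this into \eqref{E:tinvt} shows $t^{-1}t'\in\G_{0,d+1}$, and since $X\in\g_{0,-d}$ the character $\Psi_X$ is trivial on $\G_{0,d+1}$; hence the correction factor $\Psi_X(t^{-1}t')$ in \eqref{E:theta} equals $1$ and the defining formula reduces to $\theta'(t')=\theta(t)$ for every transfer pair. To turn this into the stated equivalence for an arbitrary character $\tilde\theta$ of $T(X')$ extending $\Psi_{X'}$: by the first part $\Sh_d(\theta,X)\cong\Sh_d(\theta',X')$ with $\theta'$ the explicit character just described, so $\Sh_d(\theta,X)\cong\Sh_d(\tilde\theta,X')$ holds iff $\Sh_d(\theta',X')\cong\Sh_d(\tilde\theta,X')$, which by part~(1) of Theorem~\ref{T:Shalika2} (both sides indexed by the same $X'$) holds iff $\tilde\theta=\theta'$, i.e. iff $\tilde\theta(t')=\theta(t)$ for all transfer pairs $(t,t')$. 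Along the way one should remark that $t'\mapsto t$ is a bijection $T(X')\to T(X)$ (it fixes the off-diagonal parameter $b$, and since $n\geq 1$ the two square roots $\pm a$ of $1+b^2u^{-1}v$ are separated mod $\PP^n$), so ``all transfer pairs'' is the same as ``all $t'\in T(X')$''.

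The work here is really bookkeeping rather than ideas: the one step that needs genuine care is extracting the two valuation estimates — $n\geq\lrc{(d+1)/2}$ from $\Psi_X=\Psi_{X'}$, and $n\geq d+1$ hence $t^{-1}t'\in\G_{0,d+1}$ under the extra congruence — precisely enough to know which filtration subgroup contains $t^{-1}t'$, since on that rest both the validity of the refactorization \eqref{E:refactorization} and the vanishing of the correction factor $\Psi_X(t^{-1}t')$.
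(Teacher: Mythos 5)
Your proposal is correct and follows essentially the same route as the paper: both define $\theta'$ via the uniqueness in Clifford theory (the multiplicity-free decomposition of $\Ind_{\Gmess}^{T(X)\Gmess}\Psi_X$), both verify formula \eqref{E:theta} through the refactorization \eqref{E:refactorization}, and both observe that the isomorphism of the two $\Sh_d$'s is immediate because $\Psi_{\theta,X}=\Psi_{\theta',X'}$ as characters of the common normalizer. The one substantive deviation is in the ``moreover'': the paper expands $\Psi_X(t^{-1}t')$ into the explicit product \eqref{E:psix} and reads off vanishing iff $n>d$ (a formula it reuses later in Propositions~\ref{P:triangle} and \ref{P:1}), whereas you instead note $n\geq d+1$ puts $t^{-1}t'\in\G_{0,d+1}\subseteq\ker\Psi_X$. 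Your argument is cleaner for the ``if'' direction that the proposition actually requires, though it does not give the ``only if'' for the correction factor (which the paper's formula provides as a side benefit). Your remarks that the transfer map is a bijection (separating the square roots $\pm a$ mod $\PP^n$, $n\geq 1$, using $p\neq 2$) and the explicit derivation of $n\geq\lrc{(d+1)/2}$ from the congruences on $u,u',v,v'$ are both correct and usefully fill in steps that the paper leaves terse.
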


\begin{proof}
Since  $\Psi_X = \Psi_{X'}$, these characters have the same normalizer $T(X)\Gmess = T(X')\Gmess$ and so $n = \val(u^{-1}v - u'^{-1}v') \geq \lrc{(d+1)/2}$.  Since the induced representation
$\Ind_{\Gmess}^{T(X)\Gmess}\Psi_X$ decomposes as a multiplicity-free direct sum of characters of the form $\Psi_{\theta,X}$, we conclude that for each character $\theta$ of $T(X)$ extending $\Psi_X$ there is a unique character $\theta'$ of $T(X')$ extending $\Psi_{X'}$ such that $\Psi_{\theta,X}=\Psi_{\theta',X'}$.  It suffices to verify that $\theta'$ satisfies the given identity.

Given $t'=t(a',b) \in T(X')$ and its transfer $t = t(a,b) \in T(X)$, we may for any $g'\in \Gmess$ refactorize $t'g' = t(t^{-1}t'g')$ as in \eqref{E:refactorization}.  Evaluating $\Psi_{\theta,X}=\Psi_{\theta',X'}$ on both sides yields
$$
\theta'(t')\Psi_{X'}(g')=\theta(t)\Psi_X(t^{-1}t')\Psi_X(g')
$$
whence \eqref{E:theta}. 
For the final statement, we use the explicit form \eqref{E:tinvt} to compute
\begin{equation} \label{E:psix}
\Psi_X(t^{-1}t') = \Psi(abu(u'^{-1}v'-u^{-1}v))\Psi(2bv(a-a')).
\end{equation}
Therefore this correction factor 
disappears if and only if $n>d$.  Note that in this case the transfer realizes an isomorphism $T(X)/\ker(\theta)\cong T(X')/\ker(\theta')$.  
\end{proof}

\begin{remark}
By applying this proposition to a character $\phi$ of $\T$ and two choices of good elements of depth $-r$ representing $\phi$, say $\Aphi$ and $\Aphi'=f\Aphi$, for some $f\in \U_{\lrc{s}}$, one obtains explicitly the equivalence $\Sh_d(\phi^\mu,\Aphi^\mu) \cong \Sh_d(\phi^\mu,\Aphi'^\mu)$ for any $\mu$; this was implicitly a consequence of Theorem~\ref{T:61}.
\end{remark}

\subsection{Equivalences among $\K$-components of supercuspidal representations of positive depth $r$: case $d>2r$}

From now on, let $\pi$ be a supercuspidal representation of $\G$ of positive depth arising from a (generic tamely ramified cuspidal $\G$-) datum $(\T, y,r,\phi)$.  As usual set $s = r/2$.  Let $\mu = \alp^t\lambda \in \Mu(\T)$ with $t>0$.   Then $\mu$ parametrizes an irreducible representation of $\K$ of depth $d = r+\delta(\mu)$ occuring in $\Res_{\K}\pi$ which we denote $\pi_\mu$.  We have shown that if $\Aphi = \aphi X_\T \in \LieT_{-r}$ represents $\Res_{\T_{s+}}\phi$, then $\pi_\mu \cong \Sh_d(\phi^\mu,\Aphi^\mu)$.  Let  $X(u,v)=\Aphi^\mu$; then $\val(u)=-d$ and $\val(v) = -d+2\delta(\mu) = \delta(\mu)-r=d-2r$.

\begin{remark} \label{Remark:simple}
A key property we exploit is the following.  Let $m>s$ and $m \geq \delta(\mu)$.  If $t \in \T_m^\mu = \K_{m-\delta(\mu)}\cap T(\Aphi^\mu)$, then 
$$
\phi^\mu(t) = \phi(t^{\mu^{-1}}) = \Psi_\Aphi(t^{\mu^{-1}})
$$
which is computed simply as $\Psi(\Tr(\Aphi(t^{\mu^{-1}}-I)))=\Psi(\Tr(\Aphi^\mu(t-I)))$.  That is, $\Aphi^\mu$ determines the value of the Shalika inducing character on $\T_m^\mu \Gmess \supseteq \Gmess$.
\end{remark}


\begin{proposition} \label{P:triangle}
Suppose $d>\frac43 r$.  Given  $\Aphi^\mu = X(u,v)$, set $X' = X(u,0) \in \g_{0,-d}$.  Then there exists a character $\theta'$ of $ZU=T(X')$, which agrees with $\phi$ on $Z$, such that
$$
\Sh_d(\phi^\mu,\Aphi^\mu) \cong \Sh_d(\theta',X').
$$
Explicitly, for each $t' = t(z,b)\in ZU$, let $t\in T(\Aphi^\mu)$ be its transfer; then 
\begin{equation} \label{E:tprime}
\theta'(t')= \phi^\mu(t) \Psi(zbv)^{-1}.
\end{equation}
If $d\leq 2r$ then the depth of $\theta'$ is $r-\delta(\mu)=2r-d$.  Otherwise, $\theta'$ is trivial on $U$.
\end{proposition}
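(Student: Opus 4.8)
The plan is to deduce this from Proposition~\ref{P:theta}, applied with $X=\Aphi^\mu=X(u,v)$, with $\theta$ the restriction to $T(\Aphi^\mu)$ of the character $\phi^\mu$ (it does extend $\Psi_{\Aphi^\mu}$ — this was used in Theorem~\ref{T:61} to make sense of $\Sh_d(\phi^\mu,\Aphi^\mu)$), and with $X'=X(u,0)$. The single hypothesis of that proposition to verify is that $\Psi_{\Aphi^\mu}=\Psi_{X'}$ as characters of $\Gmess$; since $X$ and $X'$ have the same $(1,2)$-entry $u$, this amounts to $\val(v)\geq\lceil(1-d)/2\rceil$. Recalling from the paragraph preceding the proposition that $\val(v)=d-2r$, and separating the parities of $d$, one checks that this inequality is equivalent to $3d\geq 4r+1$, i.e.\ to $d>\frac43 r$ — so the hypothesis is exactly what is needed here.

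Granting this, Proposition~\ref{P:theta} produces a character $\theta'$ of $T(X')=ZU$ extending $\Psi_{X'}$, with $\Sh_d(\phi^\mu,\Aphi^\mu)\cong\Sh_d(\theta',X')$ and $\theta'(t')=\phi^\mu(t)\,\Psi_{\Aphi^\mu}(t^{-1}t')$ for $t'$ with transfer $t$. Writing $t'=t(z,b)$ with $z=\pm1$ (so the parameter $a'$ of the transfer is $z$) and $t=t(a,b)$, formula \eqref{E:psix} with $u'=u$, $v'=0$ gives $\Psi_{\Aphi^\mu}(t^{-1}t')=\Psi(-abv)\Psi(2bv(a-z))=\Psi(-zbv)\Psi(bv(a-z))$, and the last factor is trivial because $\val(bv(a-z))\geq 0+(d-2r)+(2d-2r)=3d-4r\geq 1$, again by the hypothesis. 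Hence $\theta'(t')=\phi^\mu(t)\Psi(zbv)^{-1}$, which is \eqref{E:tprime}; and restricting to $Z$, the transfer of $\pm I$ is itself, so $\theta'(\pm I)=\phi^\mu(\pm I)=\phi(\pm I)$.

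It remains to compute the depth of $\theta'$, the part I expect to require the most care. I would restrict to $U$, i.e.\ take $t'=t(1,b)$; the transfer $t=t(a,b)\in T(\Aphi^\mu)$ then satisfies $a\equiv1\pmod{\PP^{2\delta(\mu)}}$ and $a^2-1=b^2v/u$, and since $a\equiv1\pmod{\PP}$ this gives $\val(a-1)=2\val(b)+2\delta(\mu)$. By Lemma~\ref{L:Tdeltamu} this places $t(a,b)$ in $\T^\mu_{\delta(\mu)+\val(b)}$. Whenever $\delta(\mu)+\val(b)>s=r/2$ — which holds for every $b$ with $\val(b)\geq\max(1,\,2r-d+1)$, and also for $\val(b)=0$ when $\delta(\mu)>s$ — Remark~\ref{Remark:simple} applies and gives $\phi^\mu(t(a,b))=\Psi(\Tr(\Aphi^\mu(t(a,b)-I)))=\Psi(2bv)$, so that $\theta'(t(1,b))=\Psi(2bv)\Psi(bv)^{-1}=\Psi(bv)$ there. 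Since $\val(v)=d-2r$, the additive character $b\mapsto\Psi(bv)$ of $U$ is trivial precisely on $\{\val(b)\geq 2r-d+1\}$: this yields depth $\leq 2r-d$ when $d\leq 2r$, and triviality of $\theta'$ on $U$ when $d>2r$. The matching lower bound for $d<2r$ comes from picking $b$ with $\val(b)=2r-d$ and $\Psi(bv)\neq1$ (possible since $\Psi$ is nontrivial on the units of $\R$), and the boundary case $d=2r$ (depth $0$) is handled the same way with $\val(b)=0$.

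The only real obstacle is the third paragraph: one must track exactly which $\T^\mu_m$ the transfer element lands in — this is where $a^2-1=b^2v/u$ feeds into Lemma~\ref{L:Tdeltamu} — so that the constraint ``$m>s$'' in Remark~\ref{Remark:simple} is met, and one must confirm that the critical valuations $\val(b)=2r-d$ and $2r-d+1$ fall in the range where this argument runs; the arithmetic above shows they do, and everything else is routine manipulation of the explicit matrix forms.
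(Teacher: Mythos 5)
Your argument is essentially the same as the paper's: invoke Proposition~\ref{P:theta} after checking $\Psi_{\Aphi^\mu}=\Psi_{X'}$, simplify $\Psi_{\Aphi^\mu}(t^{-1}t')$ via~\eqref{E:psix} using $a\equiv z\bmod\PP^{2\delta(\mu)}$, and read off the depth from Remark~\ref{Remark:simple}. The one slip is an off-by-one in your stated range of validity for that Remark: you claim the formula $\theta'(t(1,b))=\Psi(bv)$ holds for $\val(b)\geq\max\{1,2r-d+1\}$ (plus the $\val(b)=0$ caveat), but you then need it at $\val(b)=2r-d$ for the lower bound when $d<2r$, which lies outside your stated range. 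Fortunately the underlying hypothesis of the Remark, $\delta(\mu)+\val(b)>s$, does hold there since $\delta(\mu)+(2r-d)=r>s$; the correct threshold is $\val(b)\geq\max\{0,2r-d\}$, which is exactly the paper's $m=\max\{0,r-\delta(\mu)\}$. With that correction the proof is complete and matches the paper's.
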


\begin{proof}
Note that $d>\frac43 r$ implies $\val(v)\geq \lrc{(-d+1)/2}$, so $\Psi_{\Aphi^\mu}=\Psi_{X'}$ on $\Gmess$. The existence of $\theta'$ then follows from Proposition~\ref{P:theta}.  Let $t' = t(z,b)\in ZU$ have transfer $t = t(a,b)\in T(\Aphi^\mu)$.  Then $a\equiv z$ modulo $\PP^{2d-2r}$, which for $d>\frac43 r$ implies  $(a-z)bv \in \PP$.  Therefore \eqref{E:psix} yields
$
\Psi_{\Aphi^\mu}(t^{-1}t') = \Psi(-abv)= \Psi(zbv)^{-1}
$
and so \eqref{E:theta} simplifies to \eqref{E:tprime} in this case.

Evidently $\theta'$ and $\phi$ coincide on $Z$.
To determine the depth of $\theta'$, take $z=1$ and $b\in \PP^m$, for  $m=\max\{0,r-\delta(\mu)\}$.  Then the transfer of $t(1,b)\in ZU$ to $t=t(a,b)\in T(\Aphi^\mu)$ lies in $\K_m\cap T(\Aphi^\mu)$.  Lemma~\ref{L:Tdeltamu} implies $t\in \T_{\delta(\mu)+m}^\mu \subseteq \T_r^\mu \subseteq \T_{s+}^\mu$.  Therefore by Remark~\ref{Remark:simple}, $\phi^\mu(t)=\Psi(\Tr(\Aphi^\mu(t-I))) = \Psi(2bv)$.
We thus simply require $\theta'(t(1,b))=\Psi(bv)$.  Since $\val(v)=\delta(\mu)-r$, this is trivial for $b\in \PP^{m+1}$ and, if $m>0$, is nontrivial for $b\in \PP^m$.  When $d>2r$, we have $\val(bv)>0$ for all $b\in \R$ and so $\theta'(t(1,b))=1$.
\end{proof}

\begin{corollary} \label{C:intertwining}
Suppose that $d > 2r$.  Then $\pi_\mu$ depends only on the central character $\theta$  of $\phi$ and the class of $u\p^d$ modulo ${\R^\times}^2$, where $\Aphi^\mu = X(u,v)$.  In particular, with notation as in \eqref{E:pi0}, we have
$
\pi_\mu \in \{ \pi_d^+(\theta), \pi_d^-(\theta) \}.
$
\end{corollary}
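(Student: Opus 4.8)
The plan is to reduce the statement to Corollary~\ref{C:triangle} via the triangular normal form supplied by Proposition~\ref{P:triangle}. By Theorem~\ref{T:61} we have $\pi_\mu \cong \Sh_d(\phi^\mu,\Aphi^\mu)$, and by construction $\Aphi^\mu = X(u,v)$ with $\val(u) = -d$ and $\val(v) = d-2r$. Since $d > 2r > \tfrac43 r$, Proposition~\ref{P:triangle} applies and produces a character $\theta'$ of $T(X(u,0)) = ZU$ with
$$
\pi_\mu \cong \Sh_d(\phi^\mu,\Aphi^\mu) \cong \Sh_d(\theta', X(u,0));
$$
moreover, because $d > 2r$, the same proposition tells us $\theta'$ is trivial on $U$, while it agrees with $\phi$ — hence with the central character $\theta$ — on $Z$. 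Thus $\theta'$ is exactly the trivial extension of $\theta$ over $U$, which is the inducing datum for $\pi_d^\pm(\theta)$ described in Proposition~\ref{P:zeropm} and \eqref{E:pi0}.

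Next I would compare $\Sh_d(\theta', X(u,0))$ with the two candidates $\pi_d^+(\theta) \cong \Sh_d(\theta, X(-\p^{-d},0))$ and $\pi_d^-(\theta) \cong \Sh_d(\theta, X(-\ep\p^{-d},0))$ using Corollary~\ref{C:triangle}. All three inducing characters are trivial on $U$ and restrict to $\theta$ on $Z$, so the character-matching clause of that corollary is automatically satisfied for any $c \in \R^\times$; the only condition left is the existence of $c \in \R^\times$ with $c^2(-x\p^{-d}) = u$ for a suitable $x \in \{1,\ep\}$, that is, that $u\p^d$ and $-x$ lie in the same coset of $\R^\times/(\R^\times)^2$. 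Since $\{-1,-\ep\}$ represents both cosets of $\R^\times/(\R^\times)^2$ (whichever of $\pm1$ is a nonsquare, one of $-1,-\ep$ is a square and the other is not, recalling $\ep = -1$ when $-1\notin(\ratk^\times)^2$), exactly one value of $x$ works, determined by the square class of $u\p^d$. This gives $\pi_\mu \cong \pi_d^{\pm}(\theta)$ with the sign dictated by that class, which is the assertion.

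I do not anticipate a genuine obstacle: the substance is already in Propositions~\ref{P:zeropm} and \ref{P:triangle} and Corollary~\ref{C:triangle}. The points needing care are purely bookkeeping: confirming that $d > 2r$ really lands in the ``$\theta'$ trivial on $U$'' regime of Proposition~\ref{P:triangle}; checking that the central character of $\Sh_d(\phi^\mu,\Aphi^\mu)$ restricted to $Z$ equals $\theta$, so that the comparison is against the correct $\pi_d^\pm(\theta)$; and the elementary observation that $\{-1,-\ep\}$ exhausts $\R^\times/(\R^\times)^2$ regardless of whether $-1$ is a square. I would also note in passing that the statement is well posed — independent of the choice of representative $\Aphi$ — since by the remark following Proposition~\ref{P:theta} two admissible choices differ by a factor in $\U_{\lrc{s}} \subseteq 1+\PP \subseteq (\R^\times)^2$, which leaves the square class of $u\p^d$ unchanged.
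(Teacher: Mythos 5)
Your proof is correct and follows essentially the same route as the paper's: reduce $\pi_\mu$ to triangular Shalika form $\Sh_d(\theta',X(u,0))$ via Proposition~\ref{P:triangle} (with $\theta'$ the trivial extension of the central character because $d>2r$), then apply Corollary~\ref{C:triangle} to see dependence only on the square class of $u\p^d$ and match against $\pi_d^\pm(\theta)$. The extra checks you flag (that $\{-1,-\ep\}$ hits both cosets of $\R^\times/(\R^\times)^2$, and well-posedness in the choice of $\Aphi$) are sound but are left implicit in the paper.
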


\begin{proof}
Indeed, when $d>2r$, Proposition~\ref{P:triangle} implies that $\pi_\mu \cong \Sh_d(\theta',X(u,0))$, where in effect $\theta'$ is the trivial extension of the central character of $\pi$ to $ZU$.
Corollary~\ref{C:triangle} implies that this representation depends on $u$ only up to an element of ${\R^\times}^2$, whence either $\pi_\mu \cong \Sh_d(\theta,X(-\p^{-d},0)) \cong \pi_d^+(\theta)$ or  $\pi_\mu \cong \Sh_d(\theta,X(-\ep\p^{-d},0)) \cong \pi_d^-(\theta)$.
\end{proof}

Thus for any supercuspidal representation $\pi$ of $\G$, its $\K$-components of sufficiently large depth $d>2r$ occur among the four irreducible representations of $\K$ --- corresponding to classes in $\R^\times/(\R^\times)^2$ and characters of $Z$  --- which occur as depth-$d$ components of the decompositions of 
supercuspidal representations of depth zero.  Further implications of this result, also vis-\`a-vis principal series representations, are explored in \cite{Nevins2011}.

\subsection{Equivalences among $\K$-components of supercuspidal representations of positive depth $r$: case $r<d\leq 2r$}


We begin with some immediate examples.

\begin{proposition} \label{P:1}
Let $(\T,y,r,\phi)$ be a datum giving rise to a supercuspidal representation of positive depth.  Let $\psi$ be a nontrivial character of $\T$ of depth $m < r$ which is trivial on $Z$.  Set $\phi' = \psi \phi$ and construct the supercuspidal representation $\pi'$ corresponding to $(\T, y, r, \phi')$.  Then $\pi_\mu \cong \pi'_\mu$ if and only if $\delta(\mu)>m$.
\end{proposition}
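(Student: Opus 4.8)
The plan is to use Theorem~\ref{T:61} to rephrase this as a statement about Shalika's representations, then settle it with Theorem~\ref{T:Shalika2} and Proposition~\ref{P:theta}. Write $s=r/2$, $d=r+\delta(\mu)$, and let $\Aphi=\aphi X_\T\in\LieT_{-r}$ be a $\G$-generic element of depth $r$ representing $\Res_{\T_{s+}}\phi$ as in \eqref{E:Aphi}. Since $\psi$ is a $\G$-generic character of depth $m<r$ (or, for $m=0$, a depth-zero character), choose a $\G$-generic element $Y\in\LieT_{-m}$ representing $\Res_{\T_{s+}}\psi$, with $Y=0$ exactly when $\Res_{\T_{s+}}\psi$ is trivial, i.e.\ when $m\le s$. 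Then $\Aphi':=\Aphi+Y=\aphi' X_\T$ is a $\G$-generic element of depth $r$ representing $\Res_{\T_{s+}}\phi'$, with $\val(\aphi'/\aphi-1)=r-m$ when $Y\neq 0$, so by Theorem~\ref{T:61} we get $\pi_\mu\cong\Sh_d(\phi^\mu,\Aphi^\mu)$ and $\pi'_\mu\cong\Sh_d(\phi'^\mu,\Aphi'^\mu)$. As $\Aphi^\mu$, $\Aphi'^\mu$ and $Y^\mu$ are nonzero scalar multiples of $X_\T^\mu$, Lemma~\ref{L:Tdeltamu} gives $T(\Aphi^\mu)=T(\Aphi'^\mu)=\K\cap\T^\mu=Z\T_{\delta(\mu)}^\mu$, on which $\phi'^\mu=\psi^\mu\phi^\mu$ with $\psi^\mu$ trivial on $Z$ and trivial on $\T_{\delta(\mu)}^\mu$ precisely when $\delta(\mu)>m$. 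Thus the content of the proposition is that the two Shalika data agree, up to the ambiguities permitted by Theorem~\ref{T:Shalika2}, exactly when $\psi^\mu$ is trivial on $\K\cap\T^\mu$.

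If $m\le s$, then $Y=0$, so $\Aphi'^\mu=\Aphi^\mu$, and Theorem~\ref{T:Shalika2}(1) applies directly: $\pi_\mu\cong\pi'_\mu$ iff $\phi^\mu=\phi'^\mu$ as characters of $T(\Aphi^\mu)$, iff $\psi^\mu$ is trivial on $Z\T_{\delta(\mu)}^\mu$, iff $\delta(\mu)>m$. This settles the case $m\le s$, in particular every depth-zero $\psi$.

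The remaining case is $s<m<r$, where $\Aphi'^\mu\neq\Aphi^\mu$; writing $\Aphi^\mu=X(u,v)$ and $\Aphi'^\mu=X(u',v')$ one has $u^{-1}v=u'^{-1}v'$, $u'/u=v'/v=\aphi'/\aphi$ with $\val(\aphi'/\aphi-1)=r-m$, $\val(u)=\val(u')=-d$ and $\val(v)=\val(v')=\delta(\mu)-r$. By Theorem~\ref{T:Shalika2}(2), $\pi_\mu\cong\pi'_\mu$ forces $\Psi_{\Aphi^\mu}=\Psi_{(\Aphi'^\mu)^g}$ for some diagonal $g=\diag(c,c^{-1})\in\K$; expanding this yields two congruences on $c^2$ (to precisions depending on $r$, $m$, $\delta(\mu)$) whose joint solvability, since $\val((\aphi/\aphi')^2-1)=r-m$, reduces to a single numerical inequality. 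Using $m>s$ one checks this inequality holds whenever $\delta(\mu)>m$, with $c^2=\aphi/\aphi'$ then admissible and $(\Aphi'^\mu)^g=X(u,c^{-2}v')$; if $\delta(\mu)\le m$ and the inequality fails, $\pi_\mu\not\cong\pi'_\mu$ at once. Once $\Psi_{\Aphi^\mu}=\Psi_{(\Aphi'^\mu)^g}$, Proposition~\ref{P:theta} identifies $\Sh_d(\phi^\mu,\Aphi^\mu)$ with $\Sh_d(\theta'',(\Aphi'^\mu)^g)$, where $\theta''$ is the transfer of $\phi^\mu$; the ratios $u^{-1}v$ and $u^{-1}(c^{-2}v')$ differ by valuation $2\delta(\mu)+r-m$, which exceeds $d$ exactly when $\delta(\mu)>m$, so the correction factor of \eqref{E:theta} vanishes in that range and Theorem~\ref{T:Shalika2}(1) reduces $\pi_\mu\cong\pi'_\mu$ to the single identity $\theta''=(\phi'^\mu)^g$ of characters of $T((\Aphi'^\mu)^g)$. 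Conjugating a general $t'=t(a',b)$ by $g$, inserting the explicit transfer \eqref{E:tinvt}, and evaluating both sides by means of Remark~\ref{Remark:simple} (which recovers the Shalika inducing character on $\T_{m'}^\mu\Gmess$, $m'>s$, from $\Aphi^\mu$ alone) reduces this identity to the triviality of $\psi^\mu$ on the filtration subgroup of $\K\cap\T^\mu$ to which the $g$-conjugate of $t'$ belongs, hence once more to $\delta(\mu)>m$; and when $\delta(\mu)\le m$, the same bookkeeping exhibits a $t'$ on which $\theta''$ and $(\phi'^\mu)^g$ disagree.

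I expect the main obstacle to be the last case, $s<m<r$ with $\delta(\mu)\le m$: there the two Shalika parameters can share the same character $\Psi$ of $\Gmess$, so the failure of isomorphism is invisible to Theorem~\ref{T:Shalika2}(2) and must be detected by the transfer computation, whose valuation estimates --- for $c^{-2}v'-v$, for $a-a'$, and for the off-diagonal entry of the product $t(a,b)t(a',c^{-2}b)^{-1}$ --- all turn on the sign of $m-\delta(\mu)$, and which needs Remark~\ref{Remark:simple} to control $\phi^\mu$ on $\K\cap\T^\mu$ below level $\T_{s+}^\mu$.
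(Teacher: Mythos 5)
Your proposal follows the paper's own proof closely: the same dichotomy between $m\le s$ and $s<m<r$, the same use of Theorem~\ref{T:Shalika2}(1) in the first case, and the same invocation of Theorem~\ref{T:Shalika2}(2), Proposition~\ref{P:theta}, and Remark~\ref{Remark:simple} in the second. Your treatment of $m\le s$ is complete and agrees with the paper's, and your observation that the correction factor of \eqref{E:theta} vanishes precisely when $2\delta(\mu)+r-m>d$, i.e.\ when $\delta(\mu)>m$, is correct.

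The gap is exactly where you flag it: the subcase $s<m<r$, $\delta(\mu)\le m$, with $\Psi_{\Aphi^\mu}=\Psi_{(\Aphi'^\mu)^g}$, where you assert that ``the same bookkeeping exhibits a $t'$ on which $\theta''$ and $(\phi'^\mu)^g$ disagree'' without producing it. This is not a safe thing to leave to bookkeeping: in this range the correction factor $\Psi_X(t^{-1}t')$ is nontrivial, and one must actually rule out the possibility that it cancels the discrepancy coming from $\psi^\mu$. The paper does this explicitly. Taking $f=\aphi'/\aphi$ (so $f-1\in\PP^{r-m}\setminus\PP^{r-m+1}$), $c^2=f^{-1}$, $g=\diag(c,c^{-1})$, so that $(\Aphi'^\mu)^g=X(u,f^2v)$, and applying Proposition~\ref{P:theta} with $X=(\Aphi'^\mu)^g$, $\theta=(\phi'^\mu)^g$, $X'=\Aphi^\mu$, one sets $n=\max\{\delta(\mu),m\}$ and for $t'=t(a',b)\in\T^\mu_n$ with transfer $t$ one computes via Remark~\ref{Remark:simple} and \eqref{E:psix} that $\phi^\mu(t')=\Psi(2bv)$, $(\phi'^\mu)^g(t)=\Psi(2bf^2v)$, and $\Psi_{\Aphi^\mu}(t^{-1}t')=\Psi((1-f^2)bv)$, whence the transferred character $\theta'$ of \eqref{E:theta} satisfies $\theta'(t')=\Psi((1+f^2)bv)$. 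The discrepancy $\phi^\mu(t')\theta'(t')^{-1}=\Psi((1-f^2)bv)$ has valuation $(r-m)+\val(b)+\val(v)=n-m$ for a generator of $\T^\mu_n$, hence is nontrivial exactly when $n=m$, i.e.\ when $\delta(\mu)\le m$. This explicit valuation computation is the content of the hard case and must be carried out; forecasting it as an ``expected obstacle'' does not constitute a proof. (A minor remark: you apply Proposition~\ref{P:theta} with the roles of $\Aphi^\mu$ and $(\Aphi'^\mu)^g$ swapped relative to the paper; this is harmless, but make the direction of the transfer explicit when you do the computation so the correction factor lands on the right side.)
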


\begin{proof}
By construction $\phi'$ also has depth $r$, so there exists $\Aphi' \in \LieT_{-r}$, a good element of depth $-r$ representing $\phi'$.  We have $\pi_\mu \cong \Sh_d(\phi^\mu,\Aphi^\mu)$ and $\pi'_\mu \cong \Sh_d({\phi'}^\mu,{\Aphi'}^\mu)$.

If $m\leq s$, then $\phi$ and $\phi'$ agree on $\T_{s+}$, so we may choose $\Aphi'=\Aphi$.  Applying part (1) of Theorem~\ref{T:Shalika2} yields $\pi_\mu\cong \pi'_{\mu}$ if and only if $\phi^\mu = \phi'^\mu$, which happens if and only if $\psi^\mu=1$ as a character of $T(\Aphi^\mu)=Z\T_{\delta(\mu)}^\mu$.  Since $\psi$ is trivial on $Z$, this happens if and only if $\delta(\mu)>m$, as required.

Otherwise,  we have $s<m<r$.   
Write $\Aphi'=f \Aphi$ for some $f\in \R^\times$; since $\phi$ and $\phi'$ agree precisely on $\T\cap \G_{y,m+}$, $f-1\in\PP^{r-m}\setminus \PP^{r-m+1}$ and so $f \in (\R^\times)^2$.  Choose $c\in \R^\times$ such that $c^2=f^{-1}$ and set $g=\diag(c,c^{-1})\in \K$.
Writing $\Aphi^\mu = X(u,v)$ we have $(\Aphi'^\mu)^g = X(fu,fv)^g = X(u,f^{2}v)$.  

Now $\val(f^{2}v-v) = m-\delta(\mu)$; if this is less than $\lrc{(-d+1)/2}$ then $\Psi_{\Aphi^\mu}$ and $\Psi_{(\Aphi'^\mu)^g}$ are not conjugate by any diagonal matrix and we conclude by part (2) of Theorem~\ref{T:Shalika2} that $\pi_\mu \not\cong \pi'_{\mu}$.   
Otherwise, we simply have $\Psi_{\Aphi^\mu} = \Psi_{(\Aphi'^\mu)^g}$.  We apply 
Proposition~\ref{P:theta} with $X=(\Aphi'^\mu)^g$, $\theta=(\phi'^\mu)^g$ and $X'=\Aphi^\mu$ to deduce that ${\pi'_\mu}^g \cong \Sh_d(\theta',\Aphi^\mu)$,  where 
$$
\theta'(t') = (\phi'^\mu)^g(t)\Psi_{\Aphi^\mu}({t}^{-1}t')
$$
for each $t' \in T(\Aphi^\mu)$ with transfer $t \in T((\Aphi'^\mu)^g)$.
Since $\pi'_\mu \cong {\pi'_\mu}^g$, it follows from part (1) of Theorem~\ref{T:Shalika2} that $\pi_\mu \cong \pi'_\mu$ if and only if $\phi^\mu=\theta'$ as characters of $T(\Aphi^\mu)=Z\T_{\delta(\mu)}^\mu$.


Let $n = \max\{\delta(\mu),m\}$.  Let $t'=t(a',b) \in T_n^\mu$, with $a \equiv 1$ mod $\PP$, and let $t=t(a,b) \in T((\Aphi'^\mu)^g)$ be its transfer.  Then
$t^{g^{-1}}=t(a,bf) \in T(\Aphi'^\mu)$ also lies in $\T_{n}^\mu$.  Since $m>s$, $\phi$ and $\phi'$ are given by $\Psi_\Aphi$ and $\Psi_{\Aphi'}$, respectively, on $\T_n$.  Using Remark~\ref{Remark:simple}
we find that $\phi^\mu(t') = \Psi(2bv)$ and $(\phi'^\mu)^g(t) = \phi'^\mu(t^{g^{-1}})=\Psi(2bf^2v)$.  Also, noting that $2m>r$ implies under these circumstances that $abv\equiv a'bv \equiv bv$ modulo $\PP$, we compute using \eqref{E:psix} that 
$\Psi_{\Aphi^\mu}({t}^{-1}t') = \Psi((1-f^2)bv)$.  Thus $\theta'(t) = \Psi((1+f^2)bv)$.  Note that $(1+f^2)bv-2bv\in \PP^{n-m}\setminus \PP^{n-m+1}$.  If $\delta(\mu)<m$, then $n=m$ and so $\theta' \neq \phi^\mu$ on $\T_n^\mu$, whence $\pi_\mu \not\cong \pi'_\mu$.  If  $\delta(\mu)\geq m$, then $n=\delta(\mu)$ and we conclude that $\theta'$ and $\phi^\mu$ coincide as characters of $\T_{\delta(\mu)}^\mu$.  Since they also agree on $Z$ by construction, we have $\pi_\mu \cong \pi'_\mu$.
\end{proof}

\begin{proposition} \label{P:2}
Given $(\pi,\T,y,r,\phi)$ as above, let $\psi$ be a character of $\T$ of depth $m<r$ which is trivial on $Z$.  Let $\phi' = \psi \phi^{-1}$ and construct the supercuspidal representation $\pi'$ corresponding to $(\T, y, r, \phi')$.  Then if $-1\in (\ratk^\times)^2$, we have $\pi_\mu \cong \pi'_\mu$ whenever $\delta(\mu)>m$, whereas if $-1\notin (\ratk^\times)^2$, we have $\pi_\mu \cong \pi'_{\mu'}$ whenever $\delta(\mu)=\delta(\mu')>m$ and $\mu\neq \mu'$.
\end{proposition}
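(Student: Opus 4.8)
The approach mirrors the proof of Proposition~\ref{P:1}, the one new feature being that replacing $\phi$ by $\phi^{-1}$ introduces a sign which must be absorbed either by a diagonal conjugation in $\K$ (available precisely when $-1 \in (\ratk^\times)^2$) or, when $-1 \notin (\ratk^\times)^2$, by passing from $\mu$ to a different representative $\mu'$ of the same $\delta$. First I would record a good element representing $\phi' = \psi\phi^{-1}$: since $\psi$ has depth $m < r$, $\phi'$ again has depth $r$, and exactly as in Proposition~\ref{P:1} one may take $\Aphi' = -f\Aphi$ for a scalar $f \in \R^\times$ with $f = 1$ when $m \le s$ and $f \in (\R^\times)^2$, $\val(f-1) = r-m$, when $s < m < r$ ($f$ being a square because $p \ne 2$). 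Then $\pi_\mu \cong \Sh_d(\phi^\mu, \Aphi^\mu)$ and $\pi'_{\mu'} \cong \Sh_d((\phi')^{\mu'}, (\Aphi')^{\mu'})$, both of depth $d = r + \delta(\mu)$ (using $\delta(\mu') = \delta(\mu)$).

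Next I would match the Lie-algebra data by a diagonal conjugation $g = \diag(c,c^{-1}) \in \K$. Write $\Aphi^\mu = X(u,v)$, so $\val u = -d$ and $\val v = \delta(\mu) - r$. If $-1 \in (\ratk^\times)^2$, take $\mu' = \mu$ and choose $c$ with $c^2 = -f^{-1}$ (possible since $-1$ and $f$ are unit squares); then $((\Aphi')^\mu)^g = X(u, f^2 v)$, and when $f=1$ conjugation by $g$ acts as inversion on $T(\Aphi^\mu)$. If instead $-1 \notin (\ratk^\times)^2$, then $\ep = -1$; the existence of $\mu \ne \mu'$ with $\delta(\mu') = \delta(\mu)$ forces $\T$ to be unramified and $\{\mu,\mu'\} = \{\alp^t, \alp^t\dcrep\}$ with $\dcrep \in \{\ecrep,\ecrep^\eta\}$, and Table~\ref{Table:xt} gives $\Aphi^\dcrep = -\Aphi$ (so $\dcrep$ inverts $\T$, since $t(a,b)^{-1} = aI - bX_\T$), whence $(\Aphi')^{\mu'} = f\Aphi^\mu$; choosing $c$ with $c^2 = f^{-1}$ again produces $((\Aphi')^{\mu'})^g = X(u, f^2 v)$. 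In all cases $\pi'_{\mu'} \cong \Sh_d(\theta_1, X_1)$ with $X_1 = X(u, f^2 v)$, $\theta_1 = ((\phi')^{\mu'})^g$, and $T((\Aphi')^{\mu'}) = T(\Aphi^\mu)$ (scalar multiples).

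Since $\delta(\mu) > m$, we have $\val((f^2-1)v) = \delta(\mu) - m \ge 1 > 1 - \lrc{d/2}$, which forces $\Psi_{X_1} = \Psi_{\Aphi^\mu}$ as characters of $\Gmess$; moreover the transfer level $n = \val(u^{-1}f^2v - u^{-1}v) = d + \delta(\mu) - m$ exceeds $d$, so by Proposition~\ref{P:theta} the correction factor is trivial and $\pi'_{\mu'} \cong \Sh_d(\theta', \Aphi^\mu)$ with $\theta'(t') = \theta_1(t)$ for $t'$ of transfer $t$. By part~(1) of Theorem~\ref{T:Shalika2}, $\pi_\mu \cong \pi'_{\mu'}$ if and only if $\phi^\mu = \theta'$ on $T(\Aphi^\mu) = Z\T_{\delta(\mu)}^\mu$. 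On $Z$ this holds automatically, since $\phi(-I) = \phi(-I)^{-1}$, $\psi$ is trivial on $Z$, and the correction is trivial there. On $\T_{\delta(\mu)}^\mu$, writing $s = (t')^{\mu^{-1}} \in \T_{\delta(\mu)}$, the conjugation by $g$ (resp.\ by $\dcrep$) was chosen so that it sends $t$ to an element of $T(\Aphi^\mu)$ projecting in $\T$ to $s^{-1}$, while $\psi$ is trivial on $\T_{\delta(\mu)} \subseteq \T_{m+}$; hence $\theta_1(t) = \phi'(s^{-1}) = \psi(s^{-1})\phi(s^{-1})^{-1} = \phi(s) = \phi^\mu(t')$. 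When $m \le s$ this is immediate; when $s < m < r$ one has $\delta(\mu) > s$, so Remark~\ref{Remark:simple} applies on $\T_{\delta(\mu)}^\mu$ and the identity follows from the transfer computation verbatim as in the closing paragraph of the proof of Proposition~\ref{P:1}.

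I expect the character identity $\phi^\mu = \theta'$ on $Z\T_{\delta(\mu)}^\mu$ in the range $s < m < r$ to be the main obstacle, as it requires tracking the transfer between the distinct compact tori $T(X_1)$ and $T(\Aphi^\mu)$ and the correction factor \eqref{E:psix}; but, as in Proposition~\ref{P:1}, this is bookkeeping rather than a genuinely new difficulty, and the only conceptual input is the observation that a suitable element of $\K$ (a diagonal matrix in the split case, the element $\dcrep$ in the non-split case) implements inversion on the relevant compact torus.
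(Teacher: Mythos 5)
Your proposal is correct and rests on the same core idea as the paper: absorb the sign coming from $\phi \mapsto \phi^{-1}$ via conjugation by a diagonal element $\diag(c,c^{-1})$ with $c^2 = -1$ when $-1 \in (\ratk^\times)^2$, or, when $-1 \notin (\ratk^\times)^2$, by switching $\mu$ to the other coset representative $\mu'$ so that $\dcrep = \mu'\mu^{-1}$ effects the inversion on $\T$. However, the paper opens its proof with a one-line reduction you do not use: since $\phi' = \psi\phi^{-1} = \psi(\phi^{-1})$, Proposition~\ref{P:1} applied with base character $\phi^{-1}$ already gives that the supercuspidal representations attached to $\phi'$ and to $\phi^{-1}$ agree on the $\mu$ range $\delta(\mu) > m$, so it suffices to treat $\psi = 1$, $m = 0$. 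That reduction makes the good element $\Aphi' = -\Aphi$ exact rather than up to a unit $f \in \U_{r-m}$, so the transfer/correction-factor machinery (Proposition~\ref{P:theta} and equation~\eqref{E:psix}) that you invoke and then argue is ``bookkeeping'' is not needed at all; the identity $\phi^\mu(t^{-1}) = (\phi^{-1})^\mu(t)$ is immediate. Your route still works --- your computation that the transfer level $n = d + \delta(\mu) - m > d$ correctly shows the $\Psi_{X_1}$-correction vanishes, and your verification of the character identity on $Z\T^\mu_{\delta(\mu)}$ parallels the end of Proposition~\ref{P:1}'s proof --- but it redoes the $s < m < r$ case that Proposition~\ref{P:1} has already settled, so your argument is longer than it needs to be.
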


\begin{proof}
By Proposition~\ref{P:1}, it suffices to prove the result for $\psi=1$, where $m=0$.  Since now $\phi' = \phi^{-1}$, we may choose $\Aphi' = -\Aphi$.  

If $-1\in (\ratk^\times)^2$, then let $g=\diag(c,c^{-1})\in \K$ where $c^2=-1$.  For such $g$, and any $\mu$, we have $(\Aphi^\mu)^g = -\Aphi^\mu = \Aphi'^\mu$.  For any $t(a,b)\in T(\Aphi^\mu)$, we have $t(a,b)^{g^{-1}} = t(a,c^{-2}b) = t(a,-b) = t(a,b)^{-1}$, so for all $t\in T(\Aphi^\mu)=T(\Aphi'^\mu)$, we have $(\phi^\mu)^g(t)=\phi^\mu(t^{-1})=\phi'^\mu(t)$.  We thus conclude
$$
\pi_\mu \cong \Sh_d(\phi^\mu,\Aphi^\mu) \cong \Sh_d((\phi^\mu)^g,(\Aphi^\mu)^g) = \Sh_d(\phi'^\mu,\Aphi'^\mu) \cong \pi'_{\mu}.
$$
Now suppose $-1 \notin (\ratk^\times)^2$, $\delta(\mu)=\delta(\mu')>0$ and $\mu\neq \mu'$.  The latter two conditions can be satisfied only if $\T$ is unramified.  We have assumed in this case that $\ep=-1$ (otherwise one must conjugate appropriately) so from $\Aphi=-\Aphi'$ we obtain $\Aphi^\mu = \Aphi'^{\mu'}$.  We verify directly that for $t\in T(\Aphi^\mu)$, $t^{(\mu'\mu^{-1})}=t^{-1}$, so that $\phi'^{\mu'}(t) = \phi^\mu(t)$.  The equivalence now follows as above.
\end{proof}

Finally, let us show that the cases arising in Propositions~\ref{P:1} and \ref{P:2} are in fact exhaustive.


\begin{theorem}\label{T:intertwining}
Suppose $(\T,y,r,\phi)$ and $(\T',y',r',\phi')$ are data defining two supercuspidal representations of positive depth, denoted $\pi$ and $\pi'$ respectively.  Suppose they contain a common $\K$-component of depth $d$, with $r<d\leq 2r$.  Then
\begin{itemize}
\item $r=r'$, that is, $\pi$ and $\pi'$ have the same depth;
\item $y=y'$ and $\T=\T'$, or more generally, their defining tori are conjugate; and
\item $\phi$ and $\phi'$ are related as in one of Propositions~\ref{P:1} or \ref{P:2}.
\end{itemize}
\end{theorem}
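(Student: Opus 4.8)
The plan is to translate the hypothesis into a statement about Shalika's ramified representations and then push it through Theorem~\ref{T:Shalika2} and the refinements of this section (Corollary~\ref{C:triangle}, Propositions~\ref{P:theta} and \ref{P:triangle}). Since the common $\K$-component $V$ has depth $d>r$, Theorem~\ref{T:positivedepth} forces $V\cong\Sh_d(\phi^\mu,\Aphi^\mu)$ for some $\mu=\alp^t\lambda\in\Mu(\T)$ with $\delta(\mu)=d-r$ (the lone non-Shalika component, of index $t=0$ when $y=0$, has depth $r$ and is excluded); symmetrically $V\cong\Sh_d({\phi'}^{\mu'},{\Aphi'}^{\mu'})$ with $\mu'\in\Mu(\T')$ and $\delta(\mu')=d-r'$. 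Writing $\Aphi^\mu=X(u,v)$ and ${\Aphi'}^{\mu'}=X(u',v')$, recall $\val(u)=\val(u')=-d$, $\val(v)=d-2r$ and $\val(v')=d-2r'$. Interchanging $\pi$ and $\pi'$ if necessary, I assume $r\geq r'$.

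First I would prove $r=r'$, splitting on the size of $d$. If $d>\tfrac43 r$ then $d>\tfrac43 r'$ too, so Proposition~\ref{P:triangle} applies on both sides and identifies $V$ with $\Sh_d(\theta',X(u,0))$ and with $\Sh_d(\theta'',X(u',0))$, where $\theta'$ has depth $2r-d$ and $\theta''$ has depth $2r'-d$ when $d\leq 2r'$ and is trivial on $U$ otherwise. By Corollary~\ref{C:triangle} the characters $\theta'$ and $\theta''$ differ only by a square-scaling of the unipotent variable, hence have equal depth; since $d\leq 2r$ keeps $\theta'$ nontrivial on $U$, the alternative $d>2r'$ cannot occur, and $2r-d=2r'-d$. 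If instead $r<d\leq\tfrac43 r$, then $\val(v)=d-2r<\lrc{(1-d)/2}$, so $\Res_{\Gmess}\Psi_{X(u,v)}$ records $\val(v)$; by Theorem~\ref{T:Shalika2}(2) some diagonal $g\in\K$ satisfies $\Psi_{X(u,v)}=\Psi_{X(u',v')}^g$, and reading off $\val$ of the lower-left entry forces $d-2r=d-2r'$. Either way $r=r'$, so $\delta(\mu)=\delta(\mu')=d-r=:\delta$ and $\val(v)=\val(v')=d-2r$.

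Next I would match the tori. The centralizer of $\Aphi^\mu=X(u,v)$ in $\G$ is $\T_{1,u^{-1}v}$, $\G$-conjugate to $\T$ via $\mu^{-1}$, and likewise on the primed side; the $\G$-conjugacy class of such a torus is read off from the class of $u^{-1}v$ in $\ratk^\times/(\ratk^\times)^2$ (modulo the extra identification present when $-1\notin(\ratk^\times)^2$, which is the source of the dichotomy in Proposition~\ref{P:2}). Conjugating the datum of $\pi'$ by the diagonal element of Theorem~\ref{T:Shalika2}(2) — changing neither $\pi'$ nor $\Res_\K\pi'$ — I may assume $\Psi_{X(u,v)}=\Psi_{X(u',v')}$ on $\Gmess$, so $u\equiv u'$ mod $\PP^{\lrc{-d/2}}$ and $v\equiv v'$ mod $\PP^{\lrc{(1-d)/2}}$; since $\val(u)=\val(u')=-d$ this gives $u/u'\in 1+\PP\subseteq(\R^\times)^2$, and $v/v'\in(\R^\times)^2$ follows directly when $d\leq\tfrac43 r$, and when $d>\tfrac43 r$ from the leading value $\theta'(t(1,b))=\Psi(bv)$ of the triangular character together with Corollary~\ref{C:triangle}. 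Hence $u^{-1}v$ and $(u')^{-1}v'$ share a square class, so $\T$ and $\T'$ are $\G$-conjugate; replacing the datum of $\pi'$ by a $\G$-conjugate one I reduce to $\T=\T'$, $y=y'$, $r=r'$, $s=s'$.

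Finally I would extract the relation between $\phi$ and $\phi'$. For a given $\delta$ there is exactly one $\mu\in\Mu(\T)$ with $\delta(\mu)=\delta$ when $\T$ is ramified, and exactly two — related by $\dcrep$ — when $\T$ is unramified; so $\mu'=\mu$, or $\T$ is unramified and $\mu'=\mu\dcrep$. Writing $\Aphi=\aphi X_\T$, $\Aphi'=\aphi' X_\T$ with $f=\aphi'/\aphi$ a unit (both being $\G$-generic of depth $r$), hence $\Aphi'=f\Aphi$, I would feed the isomorphism $\Sh_d(\phi^\mu,\Aphi^\mu)\cong\Sh_d({\phi'}^{\mu'},{\Aphi'}^{\mu'})$ into Theorem~\ref{T:Shalika2}(2) and track valuations as before to obtain $f\in\pm(1+\PP)$ (and, when $-1\notin(\ratk^\times)^2$, that $\mu'=\mu\dcrep$ forces $f\equiv -1$ while $\mu'=\mu$ forces $f\equiv 1$). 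After a further diagonal conjugation arranging $\Psi_{\Aphi^\mu}=\Psi_{{\Aphi'}^{\mu'}}$ exactly on $\Gmess$, I invoke Theorem~\ref{T:Shalika2}(1) and Proposition~\ref{P:theta} to express $\phi^\mu$ as the transfer-twist of ${\phi'}^{\mu'}$ on the common normalizer; pulling this back through $\mu$ and $\mu'$, and using that $\phi$ and $\phi'$ restrict to the same character of $Z$ (a common $\K$-component determines the central character), this says precisely that $\phi'=\psi\phi$ when $f\equiv 1$ — the situation of Proposition~\ref{P:1} — and $\phi'=\psi\phi^{-1}$ when $f\equiv -1$ — the situation of Proposition~\ref{P:2} — where $\psi$ is a character of $\T$ trivial on $Z$ of depth $m<r$. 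I expect the main obstacle to lie in the range $\tfrac43 r<d\leq 2r$ of the first step, where the leading Shalika data no longer sees $\T$ directly and one is forced through the triangular reduction of Proposition~\ref{P:triangle}, keeping careful track of the depth of the resulting character of $ZU$ and of the borderline cases $d=2r$, $d=2r'$ and the sign subtleties when $-1\notin(\ratk^\times)^2$; the remaining work of turning the transfer-twist identity into the explicit relation $\phi'=\psi\phi^{\pm1}$ and recognizing it as one of the two listed cases is then largely bookkeeping.
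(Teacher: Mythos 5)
Your overall strategy --- reduce to Shalika's classification via Theorem~\ref{T:Shalika2}, split on $r<d\leq\tfrac43 r$ versus $\tfrac43 r<d\leq 2r$, invoke Proposition~\ref{P:triangle} in the latter range, then track valuations and square classes to recover the datum --- is essentially the route the paper takes, and your argument that $r=r'$ (hence $\delta(\mu)=\delta(\mu')$, hence $y=y'$) goes through. However, there is a genuine gap in the torus-matching step.

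You claim that the $\G$-conjugacy class of the centralizing torus $\T_{1,u^{-1}v}$ is read off from the class of $u^{-1}v$ in $\ratk^\times/(\ratk^\times)^2$, and conclude that $u^{-1}v\equiv(u')^{-1}v'$ modulo $(\R^\times)^2$ forces $\T$ and $\T'$ to be $\G$-conjugate. This is false for ramified tori when $-1\in(\ratk^\times)^2$. Concretely, for $\T=\T_{1,\p}$ and $\T'=\T_{\ep,\ep^{-1}\p}$ with $\mu=\mu'=\alp^t$, one finds $u^{-1}v=\p^{4t+1}$ and $(u')^{-1}v'=\ep^{-2}\p^{4t+1}$, which differ by the square $\ep^{-2}$; yet Table~\ref{Table:tori} lists these as \emph{distinct} conjugacy classes of $\SL_2(\ratk)$ precisely when $-1\in(\ratk^\times)^2$. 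So neither the common square class of $u^{-1}v$, nor even the sharper congruences $u/u'\in 1+\PP$ and $v/v'\in(\R^\times)^2$ that you actually derive, suffice on their own to rule out $\T\not\cong\T'$. The paper's proof has to work harder here: it multiplies the exact equality $c^2u=u'$ against the congruence $c^{-2}v\equiv v'\bmod \PP^{\val(v)+1}$ to deduce $\gam_1'\gam_2/(\gam_1\gam_2')\equiv 1\bmod\PP$, and then shows that if both $\gam_1'/\gam_1$ and $\gam_2'/\gam_2$ were nonsquare one would be forced into the conclusion $-1\notin(\ratk^\times)^2$, in which case the two ramified tori in question are in fact conjugate --- a contradiction. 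You would need a comparable argument to close the gap. A related soft spot occurs in your final step: you allow $\mu'=\mu\dcrep$ in the unramified case without establishing that this case, too, can only occur when $-1\notin(\ratk^\times)^2$; this again falls out of multiplying the $u$- and $v$-congruences rather than from any square-class consideration alone.
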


\begin{proof}
Suppose the common irreducible $\K$-component is $\pi_\mu \cong \pi'_{\mu'}$.  Then $\Sh_d(\phi^\mu,\Aphi^\mu) \cong \Sh_{d'}(\phi'^{\mu'},\Aphi'^{\mu'})$ and 
so $d=d'$.  Set $X(u,v)=\Aphi^\mu$ and $X(u',v')=\Aphi'^{\mu'}$.  Then $\val(u) = \val(u')= -d$ whereas \emph{a priori} $\val(v) = d-2r$ and $\val(v')=d-2r'$.
We first show that $\val(v)=\val(v')$.

Since  $\Sh_d(\phi^\mu,\Aphi^\mu) \cong \Sh_{d}(\phi'^{\mu'},\Aphi'^{\mu'})$, Theorem~\ref{T:Shalika2} implies there exists  $c \in \R^\times$ such that (scaling if necessary) we have
\begin{equation} \label{E:c2u}
c^2u = u' \quad \textrm{and} \quad c^{-2}v \equiv v' \mod \PP^{\lrc{(-d+1)/2}}.
\end{equation}
If $r<d \leq \frac43 r$, then $\val(v)=d-2r\leq -\frac12 d <\lrc{(-d+1)/2}$.  Thus $v$ is nonzero modulo $\PP^{\lrc{(-d+1)/2}}$ and the second congruence implies $\val(v') =\val(v)$.

Otherwise, we have $\frac43 r< d \leq 2r$. 
By the preceding we also have $d> \frac43 r'$; without loss of generality we may assume $r \geq r'$.    Then Proposition~\ref{P:triangle} applies, yielding characters $\theta_1$ and $\theta_2$ of $ZU$ such that 
$$
\pi_\mu \cong \Sh_d(\theta_1,X_1) \quad \textrm{and} \quad \pi'_{\mu'} \cong \Sh_d(\theta_2,X_2) 
$$
where $X_1 = X(u,0)$ and $X_2=X(u',0)$.  Since $c^2u=u'$, Corollary~\ref{C:triangle} implies that for all $t=t(z,b)\in ZU$, 
$$
\theta_1(t(z,c^{-2}b))=\theta_2(t(z,b)).
$$  
Let $t(a,c^{-2}b)\in T(\Aphi^\mu)$ 
be the transfer of $t=t(z,c^{-2}b)\in T(X_1)$ and let $t(a',b)\in T(\Aphi'^{\mu'})$ be the transfer of $t(z,b) \in T(X_2)$.  Expanding the above equality using \eqref{E:tprime} yields
\begin{equation}\label{E:step}
\phi^\mu(t(a,c^{-2}b))\Psi(zc^{-2}bv)^{-1} = \phi'^{\mu'}(t'(a',b))\Psi(zbv')^{-1}.
\end{equation}
This already implies $\phi$ and $\phi'$ agree on $Z$, so suppose $z=1$ and let $\val(b) = r-\delta(\mu)=2r-d\geq 0$.  Then $t(a,c^{-2}b)\in \T_{r}^\mu$, so by Remark~\ref{Remark:simple} we compute
$$
\phi^\mu(t(a,c^{-2}b)) = \Psi_{\Aphi}(t(a,c^{-2}b)^{\mu^{-1}}) 
= \Psi(2bc^{-2}v).
$$
A similar argument, and our hypothesis $r\geq r'$, gives $\phi'^{\mu'}(t(a',b)) = \Psi(2bv')$.
Therefore for $z=1$ and $b \in \PP^{r-\delta(\mu)}\setminus\PP^{r-\delta(\mu)+1}$ \eqref{E:step} becomes the identity
$$
\Psi(bc^{-2}v)= \Psi(bv'),
$$
and by construction the left side is not identically $1$.  We conclude  that $c^{-2}v\equiv v'$ modulo $\PP^{\val(v)+1}$, whence in particular $\val(v)=\val(v')$.

Thus in both cases we have that $\val(v)=\val(v')$, whence $\delta(\mu)=\delta(\mu')$ and $r=r'$.  From the definition of $\delta$ we then conclude $y \equiv y'$ modulo $2\Z$, whence $y=y'$.  

To prove the final statement, begin by noting that in both cases above we have found $c\in \R^\times$ such that $c^2u=u'$ and $c^{-2}v\equiv v'$ modulo $\PP^{\val(v)+1}$.  

If $\mu \neq \mu'$ then $\delta(\mu)=\delta(\mu')$ implies $y \in \{0,1\}$.  
Thus  $\T=\T'$ is an unramified torus 
and we may assume without loss of generality that $\mu = \alp^t$ and $\mu' =\alp^t\dcrep$ where $\Lambda(\T)=\{1,\dcrep\}$.  Since $\Aphi$ and $\Aphi'$ are good elements of depth $-r$, we can write $\Aphi = \aphi X_\T$ and $\Aphi' = \aphi'X_\T$ with in this case $\val(\aphi)=\val(\aphi')=-r$.  Using Table~\ref{Table:xt}, we can write
$$
\Aphi^\mu = X(\aphi \p^{-\delta(\mu)}, \aphi \ep \p^{\delta(\mu)}) \quad
\textrm{and} \quad \Aphi'^{\mu'} = X(\aphi' \ep \p^{-\delta(\mu)}, \aphi' \p^{\delta(\mu)}).
$$ 
Thus $c^2= u'/u = \aphi'\ep/\aphi$.  On the other hand, the congruence $c^{2} \equiv v/v'$ modulo $\PP$ yields
$c^2 \equiv \aphi\ep/\aphi'$ modulo $\PP$.  Thus modulo $\PP$, the quotient $\aphi/\aphi'$ is a self-invertible nonsquare, which exists if and only if $-1 \notin (\ratk^\times)^2$.  So taking $\ep=-1$, we have simply $\Aphi \equiv -\Aphi'$ modulo $\LieT_{-r+1}$.  The character $\psi = \phi \phi'$ is represented on $\T_{r}$ by $\Aphi+\Aphi'$ and so is trivial there, hence is of depth $m<r$.  It is also trivial on $Z$; we are thus in the setting of Proposition~\ref{P:2} and we deduce that $m<\delta(\mu)$.  

So now we may assume that $\mu = \mu'$.

If $y\in \{0,1\}$, then $\T=\T'$ and an argument as above yields  $c^2=\aphi'/\aphi$ and $c^2 \equiv \aphi/\aphi'$ modulo $\PP$.  We deduce that $\Aphi \equiv z\Aphi'$ modulo $\LieT_{-r+1}$ where this time $z$ is a self-invertible square.  A similar argument applies, showing that if $z=1$ then $\phi'$ can be factored as $\psi \phi$ whereas if $z=-1\in (\ratk^\times)^2$, then $\phi'$ can be factored as $\psi \phi^{-1}$, where in each case the depth of $\psi$ is less than $\delta(\mu)$.  Thus we are in the case of Proposition~\ref{P:1} or \ref{P:2}, respectively.

If $y=\frac12$, then there are up to $4$ possible choices for $\T=\T_{\gam_1,\gam_2}$ and $\T'=\T_{\gam_1',\gam_2'}$.  We suppose that $\mu=\alp^t$;  the case that $\mu=\alp^t\w$ (so $\delta(\mu)=2t-y$) is accomplished by interchanging 
$\gam_1$ with $\gam_2$ (and $\gam'_1$ with $\gam'_2$) throughout the following argument.  Again by Table~\ref{Table:xt}, for some $\aphi, \aphi'$ of valuation $-r-y$, we have
$$
\Aphi^{\alp^t} = X(\aphi \gam_1 \p^{-2t}, \aphi \gam_2 \p^{2t}) \quad 
\textrm{and} \quad
{\Aphi'}^{\alp^t} = X(\aphi' \gam'_1 \p^{-2t}, \aphi' \gam'_2 \p^{2t}).
$$
Thus $c^2=\frac{\aphi'\gam_1'}{\aphi \gam_1}$ and $c^{-2} \equiv  \frac{\aphi\gam_2}{\aphi'\gam_2'}$ modulo $\PP$.
It follows that the quotients $\gam_2'/\gam_2$ and $\gam_1'/\gam_1$ lie in the same class modulo $(\ratk^\times)^2$.  

We claim it cannot be the case that both are nonsquare.  For if they were then Table~\ref{Table:tori} would imply: that $\T\neq \T'$;  that $\T$ and $\T'$ correspond to the same ramified extension field; and that the quotients are mutually inverse.   Repeating the argument of the case $\mu\neq \mu'$ above, we would conclude that $-1 \notin (\ratk^\times)^2$, in which circumstance the tori are conjugate, hence equal by our choices, which is a contradiction.  

So both quotients are squares, whence from  Table~\ref{Table:tori} we see $\T=\T'$ and we have $\gam_i=\gam'_i$ for $i\in \{1,2\}$.  Then as before there exists a $z\in \{\pm 1\} \cap (\ratk^\times)^2$ such that 
$\Aphi \equiv z\Aphi'$ modulo $\LieT_{-r+1}$, and we are done, as above.
\end{proof}


\subsection*{Acknowledgments}  My grateful thanks to Jeff Adler, Fiona Murnaghan and Loren Spice for patiently sharing their knowledge of Bruhat-Tits buildings and supercuspidal representations with me.

\bibliography{padicrefs}{}
\bibliographystyle{amsplain}

\end{document}